\DeclareSymbolFontAlphabet{\Bbb}{AMSb}
\newtheorem*{assumptionA}{Assumption A}
\newtheorem*{assumptionM}{Assumption M}  
\newlength{\myleftmargin}
\DeclareSymbolFontAlphabet{\Bbb}{AMSb}
\newtheorem{theorem}{Theorem} 
\newtheorem{lemma}[theorem]{Lemma}
\newtheorem{corollary}[theorem]{Corollary}
\newtheorem{definition}[theorem]{Definition}
\newtheorem{example}[theorem]{Example}
\newtheorem{remark}[theorem]{Remark}
\newcommand{\atob}[2]{\emph{#1} $\Rightarrow$ \emph{#2}.} 
\newcommand{\ada}[1]{\emph{#1)\,}}
\newlength{\fixboxwidth}
\newcommand{\mycdot}{\,\cdot\,}
\newcommand{\R}{\mathbb{R}}
\renewcommand{\a}{\alpha}
\newcommand{\lb}{\lambda}
\newcommand{\s}{\sigma}
\newcommand{\om}{\omega}
\newcommand{\Om}{\Omega}
\newcommand{\snorm}[1]{\Vert #1 \Vert}
\newcommand{\inorm}[1]{\Vert #1 \Vert_\infty}
\title{Convergence Analysis of a Greedy Algorithm for Conditioning Gaussian Random Variables}
\author{Daniel Winkle, Ingo Steinwart, and Bernard Haasdonk\\
University of Stuttgart\\
\small Faculty 8: Mathematics and Physics\\
\small Stuttgart, Germany \\
\texttt{\small daniel.winkle@mathematik.uni-stuttgart.de}\\
\texttt{\small ingo.steinwart@mathematik.uni-stuttgart.de}\\
\texttt{\small bernard.haasdonk@mathematik.uni-stuttgart.de}
}
\begin{document}

\maketitle

\begin{abstract}
In the context of Gaussian conditioning, greedy algorithms iteratively select the most informative measurements, given an observed Gaussian random variable. However, the convergence analysis for conditioning Gaussian random variables remains an open problem. 
We adress this by introducing an operator $M$ that allows us to transfer convergence rates of the observed Gaussian random variable approximation onto the conditional Gaussian random variable.
Furthermore we apply greedy methods from approximation theory to obtain convergence rates.
These greedy methods have already demonstrated optimal convergence rates within the setting of kernel based function approximation. In this paper, we establish an upper bound on the convergence rates concerning the norm of the approximation error of the conditional covariance operator.
\end{abstract}

\textbf{Key Words.} Bayesian Optimization,  Kernel methods, Gaussian processes for regression, Greedy methods

\section{Introduction}\label{sec:intro}

We study the conditioning of Gaussian random variables that take their values in separable Banach spaces.
Our main goal is to derive convergence rates, for the operator norm of the conditional covarianace operator. To be more precise given two jointly Gaussian random variables $X,Y$, we take $n \in \mathbb{N}$ measurements on $Y$, denoted as $\{f_j'(Y) \}_{j=0}^{n-1}$. We then derive convergence rates for the operator norm of the conditional covariance operator 
\begin{align}\label{eq:SimpleConvergenceStatement}
\left\Vert \textup{cov}(X|Y) - \textup{cov}\left(X|\{f_j'(Y) \}_{j=0}^{n-1} \right) \right\Vert \leq C a_n
\end{align}
for some sequence $(a_n)$ and $C>0$, see Theorem \ref{Theorem:main convergence rate generalized} and Corollary \ref{Corollary:Polynomial Convergence Rate P-Greedy} for the precise statements. In particular, when $X$ is a Gaussian process with continuous paths, Equation \eqref{eq:SimpleConvergenceStatement} implies a convergence rate for the associated covariance function of $X|Y$ in the supremum norm \citep{IngoConditioning}.

There already are some results about how to condition Gaussian random variables:
For Gaussian random variables taking values in separable Hilbert spaces one approach to solve the conditioning problem is using shorted operators, see \citep{scovelgaus} and for separable Banach spaces \citep{Calculating} shows that the conditioned Gaussian random variable can be represented by a series. 
More recently \citep{IngoConditioning} has shown that the left side of \eqref{eq:SimpleConvergenceStatement} converges to zero provided that $\{ f_j'\}$ is a so-called filtering sequence, see \citep[Definition 3.2]{IngoConditioning}. In addition that paper contains a detailed literature review on the conditioning problem.

The existing results in the conditioning of Gaussian random variables are not satisfactory for our goal, since they do not allow to easily derive convergence rates for the conditional covariance operator. We solve this problem by calculating an operator $M$, which has the property that the observed Gaussian random variable $Y$ gets mapped onto the conditioned Gaussian random variable $Z:=\mathbb{E}(X|Y)$, meaning $MY=Z$. This operator is essential for us to transfer the convergence rates that we can derive for the conditional covariance operator of $Y$ onto the conditional covariance operator of $Z$.

To select the measurements $\{f_j'(Y) \}_{j=0}^{n-1}$ we suggest an iterative algorithm that has been well studied in the kernel setting, the so-called $P$-greedy algorithm \citep{pgreedy}.
In simple terms, we select the measurement that provides the `most' additional information, taking into account all the prior measurements. This approach is the same as in Bayesian optimization, when one maximes the covariance function, see e.g. \citep[Chapter 8.2]{GP4ML}.

Greedy algorithms are known to provide fast convergence rates for kernel methods \citep{wenzel}. Now, kernel methods and Gaussian random variables are closely related \citep{kernels=gaussian}. Therefore, it is natural to apply greedy algorithms to Gaussian random variables as well \citep{gaussiangreedy}. However, a rigorous analysis of convergence rates in specific norms remains an open problem, which we address in this paper. 

Our considered problem setting has a variety of applications; we mention three instances. 
Let us say there is a Gaussian random variable $X$ given with paths that lie in the space  $C([0,1])$ of continuous functions on the interval $[0,1]$. Now if we are only allowed to take measurements of $X$ in the intervall $[1/2,1]$, a natural question is how well we can condition the Gaussian random variable $X$. For this we consider the $C([1/2,1])$ Gaussian random variable $Y:= X|_{[1/2,1]}$ and calculate $X|Y$. 
In some cases it is possible to compute $X|Y$ explicitly, see Example \ref{example:4}.
In other cases, however it might not even be possible to calculate $X|Y$ because it might be computationally too expensive or lacks an analytic solution. In such cases \eqref{eq:SimpleConvergenceStatement} may provide an approximation rate 
at which the finite conditional covariance converges to the true conditional covariance.

A second example is partial differential equations where Gaussian random variables can also be used \citep{PDEGaussian}. A concrete partial differential equation is the Dirichlet problem \citep{Roger} for the Gaussian random variable $X$ with paths that are defined two times differentiable on an open and bounded domain $\mathbb{R}^n$ and continuous on the boundary of the domain $\partial T$. We then set $Y:=(\Delta X|_T,X|_{\partial T})$, thus $Y$ is the right hand side of the Dirichlet problem.
Without the context of Gaussian random variables this has also been done for the kernel setting in \citep{PDEWenzelWinkle}. 

As a last example we mention the case of noisy measurements meaning given an observation operator $L$ and a Gaussian random variable $N$ that is independent of $X$, we set 
\begin{align}\label{eq:LXmitNoise}
    Y:= LX + N.
\end{align} 
We apply the derived theory to $Y$ in Example \ref{example:3} and Example \ref{example:5} for specific choices of $L,X$, and $N$.

This paper is structured as follows:
In Section \ref{sec:2}, we recall some preliminaries, such as Gaussian Hilbert space, Abstract Wiener spaces, reproducing covariance spaces, and a general greedy algorithm. Section \ref{sec:results} covers the main results. In Section \ref{sec:4} we present some basic examples and demonstrate how one can apply the main theory.
The proofs can be found in Appendix \ref{sec:5} and more general examples for noisy observations can be found in Appendix \ref{sec:6}.

\section{Preliminaries}\label{sec:2}

Throughout this work $(\Omega, \mathcal{A}, \mu)$ is a probability space and $J$ is an index set with at most countable many elements. We set $L^2(\mathcal{A}):=L^2(\Omega,\mathcal{A},\mu)$ as the Hilbert space of $\mu$-equivalence classes of square integrable functions. Note that here we emphasize the dependence on the $\sigma$-algebra $\mathcal{A}$ by taking it as argument of the space. The reason for this is that conditioning changes the $\sigma$-algebra, while the measure $\mu$ usually remains unchanged in this work. 

Moreover troughout this work $E$ and $F$ denote Banach spaces and $E'$  denote the dual. We further write $B_E$ for the closed unit ball. Finally for all $A \subseteq E$ we write $\overline{A}$ for the closure of $A$.

We call a mapping $X:\Omega \to E$ weakly measurable, if $e'(X)$ is measurable for all $e' \in E'$. In addition we call $X$ strongly measurable if $X$ is measurable with respect to the Borel-$\sigma$ algebra generated by the norm topology on $E$ and $X(\Omega)$ is norm-separable.
In the case of separable Banach spaces both definitions coincide, as ensured by the well known Pettis-measurability theorem \citep[Theorem E.9]{CohnMeasureTheory}. We call $X$ a random variable if $X$ is strongly-measurable.

Moreover recall that, given $p \geq 1$ and a Banach space $E$, the Bochner space $L^p(\mathcal{A},E)$ is the linear space of all $\mu$-equivalence classes of strongly measurable functions $f: \Omega \to E$ such that
\begin{align*}
\Vert f \Vert_{L^p(\mathcal{A},E) }^p :=\int_\Omega \Vert f \Vert_E^p \, \textup{d} \mu< \infty.
\end{align*}
For more details about Bochner spaces we refer to \cite[Chapter 1]{martingal}.

We continue with conditioning. 
To this end, let $\mathcal{A}_0$ be a sub-$\sigma$-algebra of $\mathcal{A}$ and $X \in L^1(\mathcal{A},E)$. 
An $\mathcal{A}_0$-measurable function $Z$ is called a conditional expectation of $X$  under $\mathcal{A}_0$ if  
    \begin{align*}
        \int_A Z \, \textup{d} \mu = \int_A X \, \textup{d} \mu, \quad \textup{for all} \quad A \in \mathcal{A}_0.
    \end{align*}
In this case we write $Z \in \mathbb{E}(X|\mathcal{A}_0)$.    
The existence and almost sure uniqueness of conditional expectations is guaranteed by \cite[Chapter 2.6]{martingal}.
For this reason we often do not distinguish between $Z$ and $\mathbb{E}(X|\mathcal{A}_0)$.
Given a random variable $Y:\Omega \to F$ we write $\sigma(Y)$ for the smallest $\sigma$-algebra such that $Y$ is weakly measurable. Moreover we set $\mathbb{E}(X|Y):=\mathbb{E}(X | \sigma(Y))$ and for $\mathfrak{F}'\subset F'$ we define 
 \begin{align*}
 \mathbb{E}(X| \mathfrak{F}' \circ Y ) := \mathbb{E}(X| \sigma ( \{f'(Y) \, : \, f' \in \mathfrak{F}' \} ) ).
 \end{align*}

We call a random variable $X:\Omega \to \mathbb{R} $ a one dimensional Gaussian random variable if there exists $\mu,\sigma \in \mathbb{R}$, such that $\mathbb{E}\left(\mathrm{e}^{\mathrm{i}tX} \right) = \mathrm{e}^{\mathrm{i}t \mu-\frac{1}{2}\sigma^2t^2}$     holds true for all $t \in \mathbb{R}$.
We then write $X \sim \mathcal{N}(\mu,\sigma^2)$.
We note that if $\sigma >0$ one obtains a normal distribution and if $\sigma =0$ one obtains a point measure.
We call a random variable $X: \Omega \to E$ Gaussian if $E$ is separable and $e'(X)$ is a real-valued one dimensional Gaussian random variable for all $e' \in E'$.
Given two random variables $X:\Omega \to E$ and $Y:\Omega \to F$, we call them jointly Gaussian if $(X,Y):\Omega \to E \times F; \,  \omega \mapsto (X(\omega),Y(\omega))$ is a Gaussian random variable.
If $X:\Omega \to E$ is a Gaussian random variable we have $X \in L^p(\mathcal{A},E)$ for all $p\geq 1$ by Fernique's Theorem \cite[Theorem 5.3]{Fernique53}.
 	
We will often make the same assumptions, thus we consolidate them into a single statement:
\begin{assumptionA}\label{Assumption:A}
We require $E,F$ to be separable Banach spaces, $(\Omega,\mathcal{A},\mu)$ to be a probability space, and the random variables  $X:\Omega \to E$, $Y:\Omega \to F$ to be jointly Gaussian. Also we assume $\mathbb{E}(X)=\mathbb{E}(Y)=0$ and we set $Z:=\mathbb{E}(X|Y)$.
\end{assumptionA}
We remark that under \hyperref[Assumption:A]{Assumption A} $Z$ is indeed a Gaussian random variable with $\mathbb{E}(Z)=0$, see \cite[Theorem 3.3 vi)]{IngoConditioning}.

We now introduce covariance operators, which are an essential tool for analyzing Gaussian random variables, see \cite[Chapter 1]{jansonTensor}.
    To this end, recall that under \hyperref[Assumption:A]{Assumption A} the \textup{covariance operator} $\textup{cov}(X):E' \to E$ of a Gaussian random variable $X$ is defined by $\textup{cov}(X)e':= \mathbb{E}(e'(X)X)$ for all $e' \in E'$.
Given $e' \in E'$ the one-dimensional conditional variance is defined by
 \begin{align*}
     \textup{cov}(e'(X)|Y) :=  \mathbb{E} \bigl((e'(X)-\mathbb{E}(e'(X)|Y))^2 | Y\bigr),
 \end{align*}
as in \cite[Chapter 4]{conditionalvariance}.
We want to generalize this to the infinite dimensional case. To this end recall that given $e_1,e_2 \in E$ the elementary tensor $e_1 \otimes e_2: E' \to E$ is given by $e' \to e'(e_1) e_2$. 
Moreover, the injective tensor product $E \check{\otimes}E$ is one instance of a Banach space containing all elementary tensors, see e.g. \cite[Section 4]{jansonTensor}. 
Now, given a random variable $X:\Omega \to E$ the pointwise defined map $X \otimes X: \Omega \to E \check{\otimes} E$ is also a random variable, see \cite[Theorem 6.7]{jansonTensor}.
\begin{definition}
    Under \hyperref[Assumption:A]{Assumption A} we define the \textup{conditional variance} $\textup{cov}(X|Y): E' \to E$ as 
    \begin{align*}
         \textup{cov}(X|Y)  := \mathbb{E}  \Bigl (  \bigl ( X-\mathbb{E}(X|Y)) \otimes (X-\mathbb{E}(X|Y) \bigr) | Y \Bigr ).
    \end{align*}
\end{definition}

\subsection{Gaussian Hilbert Spaces}
We give a short introduction into Gaussian Hilbert spaces for more details, see \cite[Chapters 1 and 9]{Janson}.
A Gaussian linear space $G$ is a linear subspace of $L^2(\mathcal{A})$ such that each $g \in G$ is a Gaussian random variable with $\mathbb{E}(g)=0$.  A Gaussian Hilbert space $G$ is a Gaussian linear space that is closed in $L^2(\mathcal{A})$.
Under  \hyperref[Assumption:A]{Assumption A} we define $G_X:= \overline{\{e'(X) \, | \, e' \in E'\}}$
 as the Gaussian Hilbert space associated with the Gaussian random variable $X$.  
Under \hyperref[Assumption:A]{Assumption A}, we note that $(X,Y)$ is an $E \times F$-valued Gaussian random variable, and the space 
 \begin{align}\label{eq:G_{(X,Y)}}
     G_{(X,Y)}=  \overline{\{e'(X) + f'(Y) \, | \, (e',f') \in E' \times F' \}}    
 \end{align}
 is also a Gaussian Hilbert space.
The closure can be required, see \cite[Example 1.25]{Janson}.

Conditioning in Gaussian Hilbert spaces $G$ reduces to orthogonal projections in $L^2(\mathcal{A})$. Specifically, for a set $ \mathcal{G}\subset G$  and $g \in G$ the conditional expectation of $g$ given $\mathcal{G}$ is
    \begin{align}\label{eq:Conditioning Orthogonal projection}
        \mathbb{E}(g \, | \mathcal{G}) :=  \mathbb{E}(g \, | \sigma( \mathcal{G})) = \Pi_{V} g,
    \end{align}
where $\Pi_{V}$ denotes the orthogonal projection onto the subspace $V:= \overline{\textup{span} (\mathcal{G})}$, see e.g. \cite[Theorem 3.3 vii)]{IngoConditioning}.
Setting $g:=e'(X)$ we obtain with Theorem \ref{thm:BedingteErwartungenHüte} the equality $\mathbb{E}( e'(X) | Y) = \mathbb{E}(e'(X)|G_Y) = \Pi_{G_Y}(e'(X))$,
    with $\Pi_{G_Y}$ being the orthogonal projection from $G_{(X,Y)}$ onto the Gaussian Hilbert space $G_Y\subseteq G_{(X,Y)}$ and $\sigma(Y)=\sigma(G_Y)$ by Lemma \ref{lem:algebren gleich}.

\subsection{Abstract Wiener Spaces}
 We set the abstract Wiener space associated to the Gaussian random variable $X$ as
    \begin{align}\label{definiton}
        W_X:=\left\{ e \in E \, \middle| \, \exists g \in G_X \, \textup{with} \,  e= \int_\Omega g X \, \textup{d} \mu \right\}
    \end{align}
    with the norm being given by
    \begin{align}\label{Norm}
        \Vert e \Vert_{W_X} = \inf \left\{ \Vert g \Vert_{G_X } \, \middle| \, g \in G_X \, \textup{with} \,  e= \int_\Omega g X \textup{d} \mu  \right\}.
    \end{align}
In the literature abstract Wiener spaces are usually used for Gaussian measures, see e.g. \cite[Chapter 8]{DW}. 
In any case $W_X$ is a Hilbert space, see Lemma \ref{lem:Unitar}. One can naturally switch between the spaces $G_X$ and $W_X$ by using a canonical isometric isomorphism $V_X: G_X \to W_X$ which is given by $V_Xg:= \int_\Omega g X \, \textup{d} \mu$ with $g \in G_X$, see Lemma \ref{lem:Unitar}.
    In particular the adjoint $V_X^*: W_X \to G_X$ of $V_X$ is given via $V_X^*=V_X^{-1}$, see Lemma \ref{lem:Unitar}. Furthermore if we extend $V_X$ to the whole space $L^2(\mathcal{A})$ by defining    $\hat{V}_X: L^2(\mathcal{A}) \to W_X$ via
$\hat{V}_X     g := \int_\Omega g X \, \textup{d} \mu$ for $g \in L^2(\mathcal{A})$,
then Lemma \ref{lem:Unitar} shows $\textup{ker}(\hat{V}_X) = G_X^\perp$ and that the adjoint $\hat{V}_X^*: W_X \to L^2( \mathcal{A})$ is given by $\hat{V}_X^*  = V_X^{-1}$, as in Lemma \ref{lem:Unitar}.
One can utilize $\hat{V}_X^*$ to map $W_X$ into $L^2(\mathcal{A})$ and then apply $\hat{V}_X$ to map it into $W_Y$. This establishes a natural connection between $W_X$ and $W_Y$, leading to the definition of the operator $L_W:W_X \to W_Y$ as follows:
\begin{equation}\label{eq:Definition of L_W as commutative diagramm}
\begin{tikzcd}
W_X \arrow[rr, "L_W"] \arrow[rd, "{\hat{V}_X^*}"] &                                                    & W_Y \\
                                                          & L^2(\mathcal{A}) \arrow[ru, "{\hat{V}_Y}"] &    
\end{tikzcd}
\end{equation}
We note that if $Y$ is of the form as in \eqref{eq:LXmitNoise} meaning $Y=LX+N$ where $L:E \to F$ is a bounded operator and $N:\Omega \to F$ is a Gaussian random variable independent of $X$, then $L_W=L|_{W_X}$. For details, see Lemma \ref{lem:L_W = L restricted}. 
  Furthermore, it can be proven that for all $w_y \in W_Y$, we have $L_W^* w_y \in W_Z$, see Lemma \ref{lem:Conditioning X=Z}.
 Note that Lemma \ref{lem:Conditioning X=Z} shows $\textup{ran}(L_W^*) \subseteq W_Z$ and thus we can consider the new operator $M_W:W_Y \to W_Z$ given by 
 \begin{align}\label{eq:M W Definition}
 M_W w_y  :=L_W^* w_y = \hat{V}_Z \hat{V}_Y^* w_y.
\end{align}  
Note that $M_W$ is a bounded and linear operator with $\snorm{M_W}\leq 1 $.
In addition $M_W$ is surjective since 
$\hat{V}_Z:L^2(\mathcal{A}) \to W_Z$ is surjective and $\textup{ker}(\hat{V}_Z)^\perp=G_Z\subseteq G_Y=\textup{Im}(\hat{V}^*_Y)$, see Lemmata \ref{lem:Unitar} and \ref{lem:Subsets}.
Finally, $M_W$ is in general \emph{not} an isometry, see Lemma \ref{lem:isometry+surjective}.

\subsection{Reproducing Covariance Space}
We finally introduce a third Hilbert space $H_X$ associated to a Gaussian random variable $X$. To this end let $\iota: E'' \to E$ be the canonical isometric embedding, that is $\iota(e)(e')=e'(e)$. 
Since $W_X \subseteq E$ we can thus define $H_X(E'):=\iota(W_X)$ and equip $H_X(E')$ with the scalar product of $W_X$ that is $\langle \iota f, \iota g \rangle_{H_X(E')}:=  \langle f,g \rangle_{W_X}$ with $f,g \in W_X$.
The space $(H_X(E'),\langle \mycdot , \mycdot \rangle_{H_X(E')})$ is an RKHS whose kernel is given by $k_{X,E'} (e'_1,e'_2) = \langle \textup{cov}(X) e'_1,e'_2 \rangle_{E,E'} $, see \citep{goodman}. Restricting the kernel $k_{X,E'}$ to the unit ball $B_{E'}$ gives the kernel $k_X: B_{E'} \times B_{E'} \to \mathbb{R}$, whose RKHS is $H_X:=    H_X(B_{E'}):=H_X(E')|_{B_{E'}}$, with the kernel then naturally given by $k_{X} (e'_1,e'_2) = \langle \textup{cov}(X) e'_1,e'_2 \rangle_{E,E'}$
for $e_1',e_2' \in B_{E'}$, see e.g. \cite[Lemma 4.3]{svm}.
Note that this introduces an isometric isomorphism $\iota_X: W_X \to H_X$ given by $\iota_X (w) = (\iota w) |_{B_{E'}},$
as well as a canonical isometric isomorphism $U_X:H_X \to G_X$ 
\begin{equation} \label{eq: U_X}
 U_X w := V_X^* \iota_X^* w=V_X^{-1} \iota_X^{-1} w.
\end{equation}
we can canonically and isometrically switch between all three spaces $G_X,W_X$ and $H_X$, meaning that e.g. for an orthogonal projection in $G_X$, we find corresponding orthogonal projections in $W_X$ and $H_X$.
\subsection{P-Greedy}
Now we introduce the \textit{weak greedy algorithm} and recall the results from \citep{devore} that we need. 
Let $H$ be a Hilbert space and let $\mathcal{F} \subseteq B_H$ be a compact subset. Let $\gamma \in ]0,1]$ be fixed, we first choose an element $f_0 \in \mathcal{F}$ such that $
 \gamma \max_{f \in \mathcal{F}} \Vert f \Vert_H \leq \Vert f_0 \Vert_H$ holds true.
Assuming $\{f_0, \cdots, f_{n-1} \}$ and $V_n:=\textup{span}\{f_0, \cdots, f_{n-1} \}$ have been selected, we then take $f_n \in \mathcal{F}$ such that 
\begin{align*}
\gamma \max_{f \in \mathcal{F}} \textup{dist}(f,V_n) \leq  \textup{dist}(f_n,V_n)_H.
\end{align*}

Now, we turn to the $P$-greedy algorithm. Consider an RKHS $H$ with kernel $k$ and domain $T$. In addition, let $\mathcal{F} := \{ k( \mycdot , t ) \, | \, t \in T \}$
be compact and $k(t,t) \leq 1$ for all $ t\in T$. 
We select points $(t_0,\cdots , t_{n-1}) \subseteq T$ via an iterative method first we take $t_0\in T$ according to the criterion 
\begin{align*}
\gamma \sup_{t \in T}  \Vert k(\mycdot,t) \Vert_H \leq \Vert k(\mycdot,t_0) \Vert_H 
\end{align*}
with $\gamma \in (0,1)$ and we set $T_1{:=} \{t_0 \}$. The next points are then selected according to
\begin{align*}
\gamma \sup_{t \in T}  \Vert k(\mycdot,t)- \Pi_{T_{n-1}} k(\mycdot,t)  \Vert_H \leq \Vert k(\mycdot,t_0) -\Pi_{T_{n-1}} k(\mycdot,t_0) \Vert_H,
\end{align*}
where $\Pi_{T_{n-1}}$ denotes the orthogonal projection onto the space  $\textup{span}\left(\{k(\mycdot,t) \, | \, t \in T_{n-1} \} \right)$ and again we set $T_n{:=}T_{n-1} \cup \{t_{n-1} \}$.
We call this point selection algorithm weak-$P$-greedy. The following Corollary is a consequence of the statements in \citep{pgreedy}.
\begin{corollary}\label{cor:convergence rate P-greedy}
Given an RKHS $H$ with kernel $k$ selecting points via the weak $P$-greedy one obtains 
\begin{align*}
\sup_{f \in B_H} \Vert f- \Pi_{T_n} f\Vert_{C(T)} \leq \sqrt{2}\gamma^{-1} \min_{1 \leq m <n } d_m^{\frac{n-m}{n}}(\mathcal{F}).
\end{align*}
with $d_m(\mathcal{F}) := \min_{V} \sup_{f \in \mathcal{F}} \Vert f - \Pi_{V} f \Vert_H$ denoting the Kolmogorov width of the set $\mathcal{F}$,
where the minimum is taken over all $m$-dimensional subspaces $V\subseteq H$ and $\Pi_{V}$ denotes the orthogonal projection in $H$ onto $V$.
\end{corollary}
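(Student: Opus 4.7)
The plan is to reduce Corollary \ref{cor:convergence rate P-greedy} to the convergence theorem for the weak greedy algorithm from \citep{devore}, applied to the compact set $\mathcal{F} = \{ k(\mycdot, t) \, | \, t \in T\}$. Note that $\mathcal{F} \subseteq B_H$ because $\Vert k(\mycdot, t) \Vert_H^2 = k(t,t) \leq 1$ by assumption. The proof has two essentially independent ingredients: a reproducing-kernel duality that converts the $C(T)$-error on $B_H$ into an $H$-norm error on $\mathcal{F}$, and the identification of the weak-$P$-greedy rule with the weak greedy algorithm of \citep{devore} applied to $\mathcal{F}$.

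First, I would set $V_n := \textup{span}\{ k(\mycdot, t) \, | \, t \in T_n\}$. By the reproducing property and the self-adjointness of the orthogonal projection $\Pi_{T_n}$ onto $V_n$, for every $f \in H$ and $t \in T$ one has
\begin{align*}
(f - \Pi_{T_n} f)(t) \;=\; \langle (I - \Pi_{T_n}) f, k(\mycdot, t)\rangle_H \;=\; \langle f, (I - \Pi_{T_n}) k(\mycdot, t)\rangle_H.
\end{align*}
Taking the supremum over $f \in B_H$ at fixed $t$ and using Cauchy--Schwarz (with attainment at $f = (I-\Pi_{T_n})k(\mycdot,t)/\Vert (I-\Pi_{T_n})k(\mycdot,t) \Vert_H$), then interchanging the two suprema, yields the key identity
\begin{align*}
\sup_{f \in B_H} \Vert f - \Pi_{T_n} f \Vert_{C(T)} \;=\; \sup_{t \in T} \Vert k(\mycdot, t) - \Pi_{T_n} k(\mycdot, t) \Vert_H \;=\; \sup_{g \in \mathcal{F}} \textup{dist}_H(g, V_n).
\end{align*}

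Second, the weak-$P$-greedy selection rule coincides exactly with DeVore's weak greedy algorithm applied to $\mathcal{F}$: the quantity $\Vert k(\mycdot, t) - \Pi_{T_{n-1}} k(\mycdot, t) \Vert_H$ it maximizes (up to the weakness factor $\gamma$) equals $\textup{dist}_H(k(\mycdot, t), V_{n-1})$, which is the weak greedy criterion on $\mathcal{F}$. The convergence theorem from \citep{devore}, in the form collected in \citep{pgreedy}, then gives $\sup_{g \in \mathcal{F}} \textup{dist}_H(g, V_n) \leq \sqrt{2}\gamma^{-1} d_m(\mathcal{F})^{(n-m)/n}$ for every $1 \leq m < n$. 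Minimizing over $m$ and combining with the displayed identity above yields the claim.

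The main obstacle is bookkeeping rather than analytic depth: one must verify the normalization hypothesis $\mathcal{F} \subseteq B_H$ needed by \citep{devore} (already ensured by $k(t,t) \leq 1$), use compactness of $\mathcal{F}$ to guarantee that the suprema in the weak-$P$-greedy selection are attained, and check that the constant $\sqrt{2}$ and the exponent $(n-m)/n$ match those in the cited bound. No new analytic ingredient beyond the reproducing property and the cited greedy convergence theorem is required.
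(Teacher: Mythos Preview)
Your proposal is correct and follows essentially the same route as the paper, which simply states that the result is a consequence of \citep{pgreedy}: the duality identity you derive via the reproducing property is exactly \cite[Lemma~2.3]{pgreedy}, and the subsequent application of the weak greedy bound from \citep{devore} to $\mathcal{F}$ is the intended argument.
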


We emphasize that the weak-greedy algorithm above does not determines a unique sequence of spaces as many elements might satisfy the weak greedy selection criterion. The inequality holds for any sequence obtained from the weak-greedy algorithm.

\section{Main Results}\label{sec:results}
\color{black}
In this section we present the main results of this work that investigate conditioning of Gaussian random variables. We start with a technical yet crucial theorem that allows us to essentially view the process of conditioning as orthogonal projections in the spaces $G_{(X,Y)},W_{(X,Y)}$ and $H_{(X,Y)}$.
\begin{theorem}\label{Theorem:Conditioning orthogonal projection} Under \hyperref[Assumption:A]{Assumption A} and given an ONB $(r_j )_{j \in J }$ of $G_Y$ the equality
    \begin{align*}
        \mathbb{E} (X| Y) = \sum_{j \in J } r_j \int_\Omega r_j X \, \textup{d} \mu 
    \end{align*}
holds true, where the convergence of the series is almost everywhere and in $L^p(\mathcal{A},E)$ for all $p \in [1, \infty)$. Additionally the following statements hold true: 
\begin{enumerate}
\item The orthogonal projection $\Pi_G: G_{(X,Y)} \to G_{(X,Y)} $ onto $G_{(Z,Y)} =G_Y$ is given by
\begin{align*}
\Pi_G g = \mathbb{E} (g | Y ).
\end{align*}
\item The orthogonal projection $\Pi_W: W_{(X,Y)} \to W_{(X,Y)} $ onto $W_{(Z,Y)} $ is given by
\begin{align*}
 \Pi_W (w_x,w_y) = (L_w^* w_y,w_y).
\end{align*}
\item The orthogonal projection $\Pi_H: H_{(X,Y)} \to H_{(X,Y)} $ onto $H_{(Z,Y)} $ is given by
\begin{align*}
\Pi_H ( h_x,h_y) = (\iota_X L_w^* \iota^{-1}_Y h_y, h_y).
\end{align*}
\end{enumerate}
Finally, the orthogonal projections are related via the following relation
\begin{align*}
\Pi_G=V_{(X,Y)}^* \Pi_W V_{(X,Y)} = U_{(X,Y)} \Pi_H U_{(X,Y)}^*.
\end{align*}
\end{theorem}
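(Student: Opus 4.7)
The plan is to treat everything through the principle that conditioning in Gaussian Hilbert spaces reduces to orthogonal projection (equation \eqref{eq:Conditioning Orthogonal projection}), and then transport the resulting projection to $W_{(X,Y)}$ and $H_{(X,Y)}$ via the canonical isometric isomorphisms $V_{(X,Y)}$ and $U_{(X,Y)}$. A small identity that I will use repeatedly is that $\hat{V}_X h = \hat{V}_Z h$ for every $h \in G_Y$: testing against $e' \in E'$ and using that $h$ is $\sigma(Y)$-measurable, one has
\begin{align*}
e'\!\left(\int_\Omega h X \, \textup{d}\mu\right) = \int_\Omega h \, \mathbb{E}(e'(X)|Y) \, \textup{d}\mu = e'\!\left(\int_\Omega h Z \, \textup{d}\mu\right),
\end{align*}
and Hahn--Banach closes the loop.

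For the series representation, I start from $\mathbb{E}(e'(X)|Y) = \Pi_{G_Y}(e'(X))$ and expand in the ONB $(r_j)$, pulling $e'$ through the Bochner integral to obtain $\mathbb{E}(e'(X)|Y) = \sum_{j} r_j \, e'(w_j)$ with $w_j := \int_\Omega r_j X \, \textup{d}\mu \in E$. To lift this scalar identity to the $E$-valued statement $\mathbb{E}(X|Y) = \sum_j r_j w_j$, I would use that the $r_j$ are independent centered Gaussians (orthogonality in $G_Y$ is independence), so the partial sums $S_n := \sum_{j=0}^{n-1} r_j w_j$ form an $E$-valued centered Gaussian sequence that one recognises as $\mathbb{E}(X \mid \sigma(r_0,\ldots,r_{n-1}))$. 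Vector-valued martingale convergence combined with Fernique's theorem (which ensures $X \in L^p(\mathcal{A},E)$ for every $p$) then delivers convergence almost surely and in every $L^p(\mathcal{A},E)$, with $\sigma(r_0,r_1,\ldots) = \sigma(G_Y) = \sigma(Y)$ pinning the limit as $\mathbb{E}(X|Y)$.

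Part 1 is then just \eqref{eq:Conditioning Orthogonal projection} applied inside $G_{(X,Y)}$, after noting that $G_{(Z,Y)} = G_Y$ because each $e'(Z) = \mathbb{E}(e'(X)|Y) \in G_Y$ while the reverse inclusion is trivial. For Part 2, the transport principle gives $\Pi_W = V_{(X,Y)} \Pi_G V_{(X,Y)}^*$; so given $(w_x,w_y) \in W_{(X,Y)}$, I set $g := V_{(X,Y)}^*(w_x,w_y)$ and compute $\Pi_G g = \mathbb{E}(g|Y) \in G_Y$. Writing $h := \mathbb{E}(g|Y)$, the tower property gives $\int_\Omega h Y \, \textup{d}\mu = \int_\Omega g Y \, \textup{d}\mu = w_y$, so $h = \hat{V}_Y^* w_y$. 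Applying $V_{(X,Y)}$ to $h$ returns $w_y$ in the second slot and $\int_\Omega h X \, \textup{d}\mu$ in the first, and by the intermediate identity above this equals $\hat{V}_Z \hat{V}_Y^* w_y = L_W^* w_y$, as claimed. Part 3 is the same transport argument under $U_{(X,Y)}$ together with the RKHS isomorphism $\iota$, and the closing equality $\Pi_G = V_{(X,Y)}^* \Pi_W V_{(X,Y)} = U_{(X,Y)} \Pi_H U_{(X,Y)}^*$ then follows from the general fact that isometric isomorphisms conjugate orthogonal projections onto corresponding closed subspaces.

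The main technical hurdle is the series convergence statement: pointwise a.e. convergence and $L^p$-convergence into a possibly non-Hilbertian Banach space are strictly stronger than what the $L^2$-orthogonal projection yields directly. This is resolvable either by vector-valued martingale convergence or by the It\^{o}--Nisio theorem, but both require care about separability and about the precise filtration generated by the $r_j$. The remainder of the theorem is, by contrast, largely mechanical once the key identity $\hat{V}_X|_{G_Y} = \hat{V}_Z|_{G_Y}$ is in hand and one commits to the transport-of-projections viewpoint.
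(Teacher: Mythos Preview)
Your proposal is correct and follows essentially the same route as the paper: the series representation is obtained via vector-valued martingale convergence along the filtration $\sigma(r_0,\ldots,r_{n-1})$ (the paper packages this as Lemma~\ref{lem:Projektions Darstellungen}), and the projections on $W_{(X,Y)}$ and $H_{(X,Y)}$ are obtained by conjugating $\Pi_G$ through the canonical isometries, with your key identity $\hat{V}_X|_{G_Y}=\hat{V}_Z|_{G_Y}$ being precisely Lemma~\ref{lem:Conditioning X=Z}. The only cosmetic difference is direction: the paper defines $\Pi_W$ by the claimed formula and verifies $V_{(X,Y)}^*\Pi_W V_{(X,Y)}=\Pi_G$, whereas you compute $V_{(X,Y)}\Pi_G V_{(X,Y)}^*$ directly---and your caveat about the $\sigma$-algebra generated by the $r_j$ versus $\sigma(Y)$ is exactly the issue the paper resolves via Lemmata~\ref{lem:G Hut} and~\ref{lem:Z bis auf Nullmengen}.
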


Theorem \ref{Theorem:Conditioning orthogonal projection} establishes that conditioning corresponds to orthogonal projections in the spaces $G_{(X,Y)}$, $W_{(X,Y)}$, and $H_{(X,Y)}$. Moreover, it demonstrates that the operator $L_W^*$, and consequently $M_W$, are intrinsically linked to the conditioning process.


The next theorem uses the orthogonal projection statement of Theorem \ref{Theorem:Conditioning orthogonal projection} by showing that the kernel of the conditioned random variable is given by a projection of the initial kernel.


\begin{theorem}\label{Theorem: Kernel Conditioning}
    Under \hyperref[Assumption:A]{Assumption A} and given $\mathcal{E} \subset B_{E'}$, we set 
    \begin{align}\label{eq:HcurlyE}
        H(\mathcal{E}):= \overline{\textup{span}\{ k_{X}(\mycdot,e') \, | \, e' \in \mathcal{E}  \}}^{\Vert \mycdot \Vert_{H_{X}}}.
    \end{align}
        Then $X_\mathcal{E}:= \mathbb{E} (X| \, \sigma \{ e'(X) \, : \, e' \in \mathcal{E} \} )$ is a Gaussian random variable, whose kernel is given by
   \begin{align*}
   k_{\mathcal{E}}(\mycdot ,e') = \Pi_{H(\mathcal{E})} k_X(\mycdot ,e'), \quad \textup{for all} \, \, e' \in B_{E'}.
   \end{align*}
Also, the kernel $k_{X-X_\mathcal{E}}: B_{E'} \times B_{E'} \to \mathbb {R}$  of the Gaussian random variable $X-X_\mathcal{E}$ is given by 
\begin{align*}
   k_{X-X_\mathcal{E}}(e'_1,e_2') = k_X(e'_1,e_2')-k_{\mathcal{E}}(e_1',e_2'), \quad \textup{for all} \, \, e_1' ,e_2' \in B_{E'}.
\end{align*}
Additionally the following equality holds true
    \begin{align*}
      \sup_{e' \in B_{E'}}  \Vert k_{X} (\mycdot,e') -  k_{\mathcal{E}}(\mycdot,e') \Vert_{H_X}^2 = \Vert \textup{cov}(X-X_\mathcal{E} )\Vert_{E'\to E}=\Vert \textup{cov}(X)-\textup{cov}(X_\mathcal{E} )\Vert_{E'\to E}. 
    \end{align*} 
\end{theorem}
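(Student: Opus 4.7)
The plan is to reduce the kernel calculation to an orthogonal projection in the RKHS $H_X$ via the canonical isometric isomorphism $U_X : H_X \to G_X$ introduced in \eqref{eq: U_X}. The starting point is the identity
\begin{align*}
U_X k_X(\cdot, e') = e'(X), \qquad e' \in B_{E'},
\end{align*}
which I would verify by unwinding the definitions: $V_X e'(X) = \int_\Omega e'(X) X \, \textup{d}\mu = \textup{cov}(X) e'$ by \eqref{definiton}, and $\iota_X \textup{cov}(X) e' = k_X(\cdot, e')$ since $(\iota_X \textup{cov}(X) e')(e_2') = e_2'(\textup{cov}(X) e') = k_X(e_2', e')$. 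As a consequence, $U_X$ maps $H(\mathcal{E})$ isometrically onto $G_\mathcal{E} := \overline{\textup{span}\{e'(X) : e' \in \mathcal{E}\}}$ and intertwines the orthogonal projections $\Pi_{H(\mathcal{E})}$ and $\Pi_{G_\mathcal{E}}$.

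\textbf{Kernel of $X_\mathcal{E}$.} By \eqref{eq:Conditioning Orthogonal projection}, $e'(X_\mathcal{E}) = \Pi_{G_\mathcal{E}} e'(X) \in G_X$, which shows that $X_\mathcal{E}$ is a centered Gaussian random variable (invoking the remark following \hyperref[Assumption:A]{Assumption A}). Using this projection formula, the isometry of $U_X$, and idempotency plus self-adjointness of $\Pi_{G_\mathcal{E}}$,
\begin{align*}
k_\mathcal{E}(e_1', e_2') &= \mathbb{E}[e_1'(X_\mathcal{E}) e_2'(X_\mathcal{E})] = \langle \Pi_{G_\mathcal{E}} e_1'(X), \Pi_{G_\mathcal{E}} e_2'(X) \rangle_{G_X} \\
&= \langle \Pi_{H(\mathcal{E})} k_X(\cdot, e_1'), k_X(\cdot, e_2') \rangle_{H_X} = \bigl(\Pi_{H(\mathcal{E})} k_X(\cdot, e_1')\bigr)(e_2'),
\end{align*}
where the last step uses the reproducing property; symmetry of $k_\mathcal{E}$ then yields the claimed formula $k_\mathcal{E}(\cdot, e') = \Pi_{H(\mathcal{E})} k_X(\cdot, e')$.

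\textbf{Difference kernel and covariance.} For $k_{X - X_\mathcal{E}}$, I would expand
\begin{align*}
\mathbb{E}\bigl[(e_1'(X) - e_1'(X_\mathcal{E}))(e_2'(X) - e_2'(X_\mathcal{E}))\bigr]
\end{align*}
and use the $G_X$-orthogonality of $e_i'(X) - \Pi_{G_\mathcal{E}} e_i'(X)$ to every element of $G_\mathcal{E}$ to collapse the cross terms, obtaining $k_{X - X_\mathcal{E}} = k_X - k_\mathcal{E}$; the same orthogonality identity combined with the kernel-covariance correspondence gives $\textup{cov}(X - X_\mathcal{E}) = \textup{cov}(X) - \textup{cov}(X_\mathcal{E})$.

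\textbf{Norm identity.} For the final identity, idempotency and self-adjointness of $\Pi_{H(\mathcal{E})}$ together with the reproducing property give
\begin{align*}
\|k_X(\cdot, e') - k_\mathcal{E}(\cdot, e')\|_{H_X}^2 = \langle (I - \Pi_{H(\mathcal{E})}) k_X(\cdot, e'), k_X(\cdot, e') \rangle_{H_X} = \langle \textup{cov}(X - X_\mathcal{E}) e', e' \rangle_{E, E'}.
\end{align*}
Taking the supremum over $e' \in B_{E'}$ and combining $\|e\|_E = \sup_{e' \in B_{E'}} |e'(e)|$ with the Cauchy--Schwarz inequality $|\langle T e_1', e_2' \rangle|^2 \leq \langle T e_1', e_1' \rangle \langle T e_2', e_2' \rangle$ applied to the positive symmetric operator $T := \textup{cov}(X - X_\mathcal{E})$ yields the identity with $\|T\|_{E' \to E}$. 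The step requiring most care is precisely this last one: the Cauchy-Schwarz inequality must be justified from the positive-semidefiniteness of the bilinear form $(e_1', e_2') \mapsto \langle T e_1', e_2' \rangle$ (which inherits positive-semidefiniteness from $T$ being a Gaussian covariance operator), rather than appealing to a Hilbert space argument, since here $T$ maps between $E'$ and $E$.
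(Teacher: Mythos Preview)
Your argument is correct and follows the same core strategy as the paper: transport the problem to $G_X$ via the isometry $U_X$, identify conditioning with orthogonal projection there, and read off the kernel via the reproducing property. Two small refinements: before writing $e'(X_\mathcal{E}) = \Pi_{G_\mathcal{E}} e'(X)$ you need Lemma~\ref{Lemma: e' commutes with conditioning} to commute $e'$ with the conditional expectation; and your appeal to ``the remark following Assumption~A'' does not apply, since that remark concerns $\mathbb{E}(X|Y)$ for a jointly Gaussian pair rather than conditioning on a $\sigma$-algebra generated by $\{e'(X):e'\in\mathcal{E}\}$ --- but your own observation that $e'(X_\mathcal{E})\in G_X$ for every $e'$ already yields Gaussianity by definition, so simply drop that reference. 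The one substantive difference is that the paper constructs an auxiliary $\ell^2$-valued random variable $Y$ with $G_Y=G_\mathcal{E}$ (Lemma~\ref{lem:GY=GE}) in order to reduce to conditioning on a genuine random variable and to invoke Lemma~\ref{lem:Projektions Darstellungen} for joint Gaussianity of $X$ and $X_\mathcal{E}$; your direct appeal to \eqref{eq:Conditioning Orthogonal projection} bypasses this machinery and is cleaner, at the cost of not explicitly recording joint Gaussianity (which you recover implicitly from $e'(X)-e'(X_\mathcal{E})\in G_X$). Your Cauchy--Schwarz justification for $\|\textup{cov}(X-X_\mathcal{E})\|_{E'\to E}=\sup_{e'\in B_{E'}}\langle\textup{cov}(X-X_\mathcal{E})e',e'\rangle$ is more explicit than the paper's, which simply invokes ``the operator norm definition''.
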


For a short example of Theorem \ref{Theorem: Kernel Conditioning}, let $S \subseteq T \subseteq \mathbb{R}^d$ be compact,  $X: \Omega \to C(T)$ be a Gaussian random variable, and $\mathcal{E}:=\{ \delta_s \, | \, s \in S\}$ with $\delta_s$ being the point evaluation in the point $s$. Then to obtain $X_\mathcal{E}$ we only have to calculate $\Pi_{H(\mathcal{E})} k_X(\mycdot,\delta_t)$ for $t \in T$. Here, we note that $k_X(\delta_s,\delta_t)=k(s,t)$ holds true, where $k$ is the covariance function. We write $H$ for its RKHS. This gives $H=W_X$ and thus $\iota_X H=H_X$. Lastly we have 
\begin{align*}
H(\mathcal{E})=\iota H(S):= \iota_X \overline{\textup{span} \{ k(\mycdot, s) \, | \, s \in S\}}^H.
\end{align*}
 
\begin{corollary}\label{Coroallry:No Noise}
Under \hyperref[Assumption:A]{Assumption A} and given a bounded operator $L:E \to F$ such that 
\begin{align*}
Y=LX,
\end{align*}
one obtains $G_Z=G_Y \subseteq G_X$
and the kernel to the space $H_Z$ is given by 
\begin{align*}
k_Z(\mycdot,e')= \Pi_{H(\mathcal{E})} k_X(\mycdot,e')
\end{align*} 
for all $e' \in B_{E'}$ and $H(\mathcal{E}):= \overline{\textup{span}\{ k_{X}(\mycdot,L'f') \, | \, f' \in F' \wedge L'f' \in B_{E'}  \}}^{\Vert \mycdot \Vert_{H_{X}}}$.
\end{corollary}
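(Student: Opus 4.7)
The plan is to establish the two assertions separately, leveraging Theorem \ref{Theorem:Conditioning orthogonal projection} for the Gaussian Hilbert space identities and Theorem \ref{Theorem: Kernel Conditioning} for the kernel formula.

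First I would verify the chain $G_Z = G_Y \subseteq G_X$. Since $Y = LX$, for any $f' \in F'$ we have $f'(Y) = f'(LX) = (L'f')(X)$, so $\{f'(Y) : f' \in F'\} \subseteq \{e'(X) : e' \in E'\}$, and taking $L^2$-closures yields $G_Y \subseteq G_X$. For the equality $G_Z = G_Y$, Theorem \ref{Theorem:Conditioning orthogonal projection}(1) already gives $G_Z \subseteq G_{(Z,Y)} = G_Y$. For the reverse inclusion, the same theorem combined with the relation $e'(Z) = \mathbb{E}(e'(X)|Y) = \Pi_{G_Y}(e'(X))$ does the job: choosing $e' = L'f'$ with $f' \in F'$ gives $e'(X) = f'(Y) \in G_Y$, so the projection acts as the identity and hence $(L'f')(Z) = f'(Y)$. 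This shows every generator $f'(Y)$ of $G_Y$ already lies in $G_Z$, whence $G_Y \subseteq G_Z$ after closure.

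Next, to establish the kernel formula, I would apply Theorem \ref{Theorem: Kernel Conditioning} to the subset $\mathcal{E} := \{L'f' : f' \in F',\, L'f' \in B_{E'}\} \subseteq B_{E'}$. This requires identifying $X_\mathcal{E}$ with $Z$, which in turn amounts to proving $\sigma\{e'(X) : e' \in \mathcal{E}\} = \sigma(Y)$. One direction is immediate: for $e' \in \mathcal{E}$ one has $e'(X) = (L'f')(X) = f'(Y)$, which is $\sigma(Y)$-measurable. For the converse, I would use a rescaling argument. Given an arbitrary $f' \in F'$, either $L'f' = 0$, in which case $f'(Y) = (L'f')(X) = 0$ almost surely and is trivially measurable, or $L'f' \neq 0$, in which case setting $c := \snorm{L'f'}_{E'}^{-1}$ produces $cf' \in F'$ with $L'(cf') \in B_{E'}$, so that $f'(Y) = c^{-1}(L'(cf'))(X)$ belongs to $\sigma\{e'(X) : e' \in \mathcal{E}\}$. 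Since $\sigma(Y)$ is generated by $\{f'(Y) : f' \in F'\}$, the equality of $\sigma$-algebras follows. Consequently $X_\mathcal{E} = Z$, and the kernel formula $k_Z(\cdot, e') = \Pi_{H(\mathcal{E})} k_X(\cdot, e')$ drops out of Theorem \ref{Theorem: Kernel Conditioning}.

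The only slightly delicate point is the unit-ball restriction in the definition of $\mathcal{E}$: the set $\mathcal{E}$ omits those $L'f'$ of norm exceeding one, yet the generated $\sigma$-algebra is insensitive to this restriction because $\sigma$-algebras are automatically closed under scalar multiplication. Making this explicit via the rescaling step above is the only nonroutine piece of the argument; the remaining identifications are direct invocations of Theorems \ref{Theorem:Conditioning orthogonal projection} and \ref{Theorem: Kernel Conditioning}.
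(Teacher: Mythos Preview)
Your proposal is correct and follows essentially the same route as the paper: both arguments reduce the kernel formula to Theorem~\ref{Theorem: Kernel Conditioning} by identifying $Z$ with $X_{\mathcal{E}}$ for a suitable $\mathcal{E}\subseteq B_{E'}$, and both handle the unit-ball restriction by rescaling (the paper simply builds this into its choice $\mathcal{E}=\{0\}\cup\{L'f'/\snorm{L'f'}_{E'}:L'f'\neq 0\}$, while you keep the $\mathcal{E}$ from the statement and argue the rescaling at the $\sigma$-algebra level). The only substantive difference is in proving $G_Y\subseteq G_Z$: the paper invokes Lemma~\ref{lem:Subsets}, which gives the general implication $G_Y\subseteq G_X\Rightarrow G_Y=G_Z$, whereas you give a direct elementary argument via $(L'f')(Z)=\Pi_{G_Y}((L'f')(X))=f'(Y)$, which is slightly more hands-on but avoids the auxiliary lemma.
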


Our next goal is to calculate the conditional expectation $Z=\mathbb{E}(X|Y)$. To this end we recall the mapping $M_W: W_Y \to W_Z$
\begin{align*}
        M_W w_y = \hat{V}_{Z} \hat{V}_{Y}^* w_y
\end{align*}
from \eqref{eq:M W Definition}.
Moreover, recall from Theorem \ref{Theorem:Conditioning orthogonal projection} that given an ONB $(r_j)_{j \in J} \subseteq G_Y$ we find
\begin{align*}
    Y=  \sum_{j \in J} r_j \int_\Omega r_j Y \, \textup{d}\mu.
\end{align*}
 If we could apply $M_W$ to $Y$, we would obtain 
\begin{align}\label{eq:M_Wcheating}
M_W Y = \sum_{j\in J} r_j M_W \left( \int_\Omega r_j Y \, \textup{d} \mu \right) = \sum_{j \in J} r_j \int_\Omega r_j Z \, \textup{d} \mu = Z,  
\end{align} 
thus solving the conditioning problem. Unfortunately, however $M_W$ is only defined on $W_Y$, therefore Equation \eqref{eq:M_Wcheating} is not valid and should only be seen as a heuristic. 

In the following we investigate whether we can turn this heuristic into a rigorous argument.
To this end we explore under which conditions we can continuously extend the operator $M_W$. We begin with the following lemma.

\begin{lemma}\label{Lemma:M_W Bounded}
Let \hyperref[Assumption:A]{Assumption A} be satisfied and $W_Y$ be dense in $F$. If there exists a $C>0$ with
\begin{align}\label{eq:M_W bounded}
\Vert M_W w \Vert_E \leq C \Vert w \Vert_F
\end{align} 
for all $w \in W_Y$, then there exists a bounded linear operator $M:F \to E$ such that $MY=Z$ almost everywhere and $M|_{W_Y}=M_W$.
\end{lemma}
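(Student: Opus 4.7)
My strategy is to construct $M$ by the standard Bounded-Linear-Transformation (BLT) extension and then verify $MY=Z$ via the series representation from Theorem~\ref{Theorem:Conditioning orthogonal projection}.

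For the extension, observe that $M_W: W_Y \to E$ is linear and, by hypothesis \eqref{eq:M_W bounded}, bounded with respect to the norm inherited from $F$ on its domain (and the norm of $E$ on the range, which is fine because $W_Z\subseteq E$). Since $W_Y$ is dense in $F$, the BLT theorem yields a unique bounded linear operator $M:F\to E$ with $M|_{W_Y}=M_W$ and $\Vert M\Vert\leq C$. This immediately gives the restriction property and the boundedness/linearity of $M$.

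To verify $MY=Z$ almost everywhere, I would fix an ONB $(r_j)_{j\in J}$ of $G_Y$ and apply Theorem~\ref{Theorem:Conditioning orthogonal projection} with $X$ replaced by $Y$. Since $\mathbb{E}(Y|Y)=Y$, this produces
\begin{align*}
Y \;=\; \sum_{j\in J} r_j \int_\Omega r_j Y \, \mathrm{d}\mu
\end{align*}
with convergence in $L^p(\mathcal{A},F)$ for every $p\in[1,\infty)$. Each coefficient $w_j:=\int_\Omega r_j Y\,\mathrm{d}\mu$ belongs to $W_Y$ by the very definition \eqref{definiton}, so $M(w_j)=M_W(w_j)$. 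Since $M$ acts on the Bochner space via $\Vert M\varphi\Vert_{L^p(\mathcal{A},E)}\leq \Vert M\Vert\cdot\Vert\varphi\Vert_{L^p(\mathcal{A},F)}$, the operator may be applied termwise, yielding $MY = \sum_{j\in J} r_j \, M_W(w_j)$ in $L^p(\mathcal{A},E)$.

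It remains to identify each $M_W(w_j)$. Writing $w_j = V_Y r_j$ and using $\hat{V}_Y^*V_Y r_j = V_Y^{-1}V_Y r_j = r_j$ (by Lemma~\ref{lem:Unitar}, since $r_j\in G_Y$), the definition $M_W w_y = \hat{V}_Z\hat{V}_Y^* w_y$ gives $M_W(w_j) = \hat{V}_Z r_j = \int_\Omega r_j Z\,\mathrm{d}\mu$. Because $r_j$ is $\sigma(Y)$-measurable, the defining property of conditional expectation yields $\int_\Omega r_j Z\,\mathrm{d}\mu = \int_\Omega r_j X\,\mathrm{d}\mu$. Substituting back, $MY = \sum_{j\in J} r_j \int_\Omega r_j X\,\mathrm{d}\mu$, which is precisely the series representation of $Z=\mathbb{E}(X|Y)$ from Theorem~\ref{Theorem:Conditioning orthogonal projection}; hence $MY=Z$ almost everywhere. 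The only delicate step I foresee is the termwise interchange of $M$ with the infinite sum, but this reduces to the routine continuity of $M$ on $L^p(\mathcal{A},F)$; everything else is bookkeeping with the canonical isometries and the tower property.
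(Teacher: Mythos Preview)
Your proposal is correct and follows essentially the same approach as the paper: both use the BLT theorem to extend $M_W$ and then verify $MY=Z$ by applying $M$ termwise to the series representation $Y=\sum_j r_j\int_\Omega r_j Y\,\mathrm d\mu$ from Theorem~\ref{Theorem:Conditioning orthogonal projection}, obtaining $\sum_j r_j\int_\Omega r_j Z\,\mathrm d\mu=Z$. Your write-up is in fact more careful than the paper's (which just points back to the heuristic \eqref{eq:M_Wcheating}), since you justify the interchange of $M$ with the series via continuity on $L^p(\mathcal A,F)$ and spell out $M_W w_j=\hat V_Z r_j$; the extra detour through $\int_\Omega r_j X\,\mathrm d\mu$ is unnecessary but harmless.
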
 

An example where the inequality in \eqref{eq:M_W bounded} would be in partial differential equation with well-posedness inequalities, e.g. \citep[Chapter 2.5]{Magenes}

If $M_W$ does not satisfy \eqref{eq:M_W bounded}, we can change one of the spaces $E$ or $F$ or both to obtain that $M_W$ can be extended, as we exemplify by the following theorem.

\begin{theorem}\label{Theorem:Extension of M_W}
    Under \hyperref[Assumption:A]{Assumption A}, assuming $G_Y=G_Z$, and $W_Y$ is dense in $F$, there always exists a Banach space $\tilde{E}$ 
    such that $W_Z\subseteq \tilde{E}$ and there exists an extension $\hat{M}:F \to \Tilde{E}$ of $M_W$ which is an isometric isomorphism and satisfies $\hat{M} Y=Z$.
\end{theorem}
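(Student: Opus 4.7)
The plan is to transfer the Banach norm from $F$ across the natural Hilbert-space isometry $M_W$, define $\tilde E$ as the induced completion of $W_Z$, extend $M_W$ to $\hat M:F\to\tilde E$ by density, and verify the identity $\hat M Y=Z$ by term-by-term extension of the ONB expansion from Theorem \ref{Theorem:Conditioning orthogonal projection}.

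First I would observe that the extra hypothesis $G_Y=G_Z$ promotes $M_W=\hat V_Z\hat V_Y^*:W_Y\to W_Z$ to an isometric isomorphism of Hilbert spaces. Indeed $\hat V_Y^*=V_Y^{-1}$ sends $W_Y$ isometrically onto $G_Y$, and on $G_Z=G_Y$ the operator $\hat V_Z$ coincides with the isometry $V_Z:G_Z\to W_Z$, so $M_W=V_Z V_Y^{-1}$ is an isometric bijection. This allows me to define a new norm on $W_Z$ by $\Vert w\Vert_{\tilde E}:=\Vert M_W^{-1}w\Vert_F$, which is a genuine norm thanks to injectivity of $M_W^{-1}$, and to set $\tilde E$ to be the completion of $(W_Z,\Vert\mycdot\Vert_{\tilde E})$. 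By construction $W_Z$ is dense in $\tilde E$, and $M_W:(W_Y,\Vert\mycdot\Vert_F)\to(W_Z,\Vert\mycdot\Vert_{\tilde E})$ is an isometry with dense range on both sides. Since $W_Y$ is dense in $F$ by hypothesis, extending by uniform continuity yields an isometric isomorphism $\hat M:F\to\tilde E$ that agrees with $M_W$ on $W_Y$.

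Next I would verify $\hat M Y=Z$ by fixing an ONB $(r_j)_{j\in J}$ of $G_Y=G_Z$ and applying Theorem \ref{Theorem:Conditioning orthogonal projection} twice: once with conditioned variable equal to $Y$ itself (using joint Gaussianity of $(Y,Y)$) to obtain $Y=\sum_{j\in J}r_j\int_\Omega r_j Y\,\mathrm d\mu$ converging almost surely in $F$, and once with conditioned variable $Z$ to obtain $Z=\sum_{j\in J}r_j\int_\Omega r_j Z\,\mathrm d\mu$ converging almost surely in $E$. Each coefficient $\int r_j Y\,\mathrm d\mu=V_Y r_j$ lies in $W_Y$ and satisfies $M_W(V_Y r_j)=V_Z V_Y^{-1}V_Y r_j=V_Z r_j=\int r_j Z\,\mathrm d\mu$. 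Applying the bounded map $\hat M$ to the partial sums $S_n^Y:=\sum_{j<n}r_j\int r_j Y\,\mathrm d\mu\in W_Y$ produces $\hat M S_n^Y=\sum_{j<n}r_j\int r_j Z\,\mathrm d\mu=:S_n^Z\in W_Z$. Letting $n\to\infty$, the left side tends to $\hat M Y$ in $\tilde E$ by continuity of $\hat M$, while the right side tends to $Z$ in $E$ by the second expansion.

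The main obstacle I foresee is making the identification $\hat M Y=Z$ rigorous, since the two limits are reached in different ambient norms. The cleanest route is to view $\tilde E$ as a second Banach realisation of the conditional expectation: the partial sums $S_n^Z$ live in the common subspace $W_Z\subseteq E\cap\tilde E$, so the $\tilde E$-limit $\hat M Y$ is by construction a Gaussian $\tilde E$-valued random variable whose image under every continuous linear functional agrees almost surely with the corresponding functional applied to the original $E$-valued $Z$, and one concludes that $\hat M Y$ is the canonical $\tilde E$-realisation of $Z$. Writing this identification out carefully---and checking measurability and almost-sure uniqueness---will be the most delicate part of the argument; everything else reduces to the isometry, density, and ONB-expansion facts already collected in Section \ref{sec:2} and Theorem \ref{Theorem:Conditioning orthogonal projection}.
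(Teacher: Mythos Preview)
Your proposal is correct and follows essentially the same route as the paper: define $\tilde E$ as the completion of $W_Z$ under the transported norm $\|w\|_{\tilde E}:=\|M_W^{-1}w\|_F$, extend $M_W$ by density to an isometric isomorphism $\hat M:F\to\tilde E$, and verify $\hat M Y=Z$ by applying $\hat M$ term-by-term to the ONB expansion of $Y$ from Theorem~\ref{Theorem:Conditioning orthogonal projection}. The paper's proof treats the final identification $\sum_j r_j\int r_j Z\,\mathrm d\mu=\mathbb E(Z|Y)=Z$ as a direct equality without commenting on the two ambient norms, so the ``obstacle'' you flag is handled there tersely rather than with the functional-agreement argument you sketch; either reading suffices for the level of rigor in the paper.
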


We note that one could instead of changing the norm on $E$ change the norm on $F$ to obtain a continuous extension of $M_W$ into the space $E$.

Having shown criteria for extending the operator $M_W$ to an operator $M:F \to E$, we make the following assumption:
\begin{assumptionM}\label{Assumption:M}
    Under \hyperref[Assumption:A]{Assumption A}, we assume that there exists a bounded linear operator $M:F \to E$ such that 
$MY=Z$ almost everywhere holds true.
\end{assumptionM}
The operator $M$ allows us to obtain an easy formula for the conditional variance.
\begin{theorem}\label{Theorem:conditional_variance}
    Under \hyperref[Assumption:A]{Assumption A} the conditional variance of $X$ is given via 
    \begin{align*}
       \textup{cov}(X|Y)=  \textup{cov}(X) - \textup{cov}(Z) 
    \end{align*}
    with $Z:=\mathbb{E}(X|Y)$. Additionally under \hyperref[Assumption:M]{Assumption M} the conditional variance is also given via
    \begin{align*}
         \textup{cov}(X|Y) =  \textup{cov}(X) -  M\textup{cov}(Y) M'.
    \end{align*}
\end{theorem}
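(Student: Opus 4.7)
The plan is to first establish the identity $\textup{cov}(X|Y) = \textup{cov}(X) - \textup{cov}(Z)$ via the standard Gaussian trick that the residual $X-Z$ is independent of $Y$, and then derive the $M\textup{cov}(Y)M'$ formula by plugging $Z=MY$ into the definition of the covariance operator.

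For the first formula, I would start by checking that $X-Z$ and $Y$ are independent. Since $Z=\mathbb{E}(X|Y)$ is the image of $(X,Y)$ under a bounded linear map (indeed, we have $e'(Z) = \mathbb{E}(e'(X)|Y) = \Pi_{G_Y}e'(X) \in G_{(X,Y)}$ for every $e' \in E'$ by the first part of \Cref{Theorem:Conditioning orthogonal projection}), the pair $(X-Z,Y)$ is again jointly Gaussian. For any $e' \in E'$ and $f' \in F'$, the defining property of the orthogonal projection $\Pi_G$ in \Cref{Theorem:Conditioning orthogonal projection} gives $e'(X-Z) = e'(X)-\Pi_G e'(X) \perp G_Y$ in $L^2(\mathcal{A})$, so $\mathbb{E}(e'(X-Z)f'(Y))=0$. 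Vanishing cross-covariance of a jointly Gaussian pair of Banach-valued random variables implies independence. Consequently $(X-Z)\otimes(X-Z)$ is $\sigma(X-Z)$-measurable and independent of $\sigma(Y)$, so
\begin{align*}
\textup{cov}(X|Y) = \mathbb{E}\bigl((X-Z)\otimes(X-Z)\,\big|\,Y\bigr) = \mathbb{E}\bigl((X-Z)\otimes(X-Z)\bigr) = \textup{cov}(X-Z).
\end{align*}

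Next, I would expand $\textup{cov}(X-Z)e' = \mathbb{E}(e'(X-Z)(X-Z))$ into four terms and simplify the two cross terms. For any $e',f' \in E'$, the identity $\mathbb{E}(e'(X-Z)f'(Z))=0$ holds since $f'(Z) \in G_Y$ while $e'(X-Z) \perp G_Y$; testing against $f'$ (and using that $\textup{cov}(Z)e' \in E$ is characterized by its dual pairings) yields $\mathbb{E}(e'(X)Z) = \mathbb{E}(e'(Z)Z) = \textup{cov}(Z)e'$, and similarly $\mathbb{E}(e'(Z)X) = \textup{cov}(Z)e'$. Collecting terms gives
\begin{align*}
\textup{cov}(X-Z)e' = \textup{cov}(X)e' - \textup{cov}(Z)e' - \textup{cov}(Z)e' + \textup{cov}(Z)e' = \textup{cov}(X)e' - \textup{cov}(Z)e',
\end{align*}
which proves the first assertion.

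For the second assertion, under \hyperref[Assumption:M]{Assumption M} we have $Z = MY$ almost everywhere, so for every $e' \in E'$,
\begin{align*}
\textup{cov}(Z)e' = \mathbb{E}\bigl(e'(MY)\,MY\bigr) = M\,\mathbb{E}\bigl((M'e')(Y)\,Y\bigr) = M\,\textup{cov}(Y)\,M'e',
\end{align*}
where pulling $M$ through the Bochner integral is justified by its boundedness. Combining with the first part yields $\textup{cov}(X|Y) = \textup{cov}(X) - M\textup{cov}(Y)M'$. The main delicate point is the independence step: for Banach-valued jointly Gaussians one must know that zero cross-covariance implies independence, which I would either cite from a standard reference (e.g.\ via the characteristic functional of $(X-Z,Y)$ factorizing) or reduce to the finite-dimensional case by testing against arbitrary $(e',f') \in E' \times F'$.
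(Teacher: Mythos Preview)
Your proof is correct and follows essentially the same approach as the paper: both establish that the residual $X-Z$ is independent of $Y$ (via joint Gaussianity and vanishing cross-covariance), conclude $\textup{cov}(X|Y)=\textup{cov}(X-Z)$, and then expand $\textup{cov}(X-Z)=\textup{cov}(X)-\textup{cov}(Z)$ exactly as in Lemma~\ref{lem:Covariance Rechenregel}. The only presentational difference is that the paper reduces to the scalar diagonal $\langle \textup{cov}(X|Y)e',e'\rangle$ via Lemma~\ref{lem:conditional variance schwache formulierung} and then invokes polarization, whereas you work directly at the tensor level and need the Banach-valued independence statement; both routes are standard and equivalent here.
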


We use Theorem \ref{Theorem:conditional_variance} to derive convergence rates for the conditional variance leading to the following theorem.

\begin{theorem}\label{Theorem:main convergence rate generalized}
    Under \hyperref[Assumption:A]{Assumption A}, \hyperref[Assumption:M]{Assumption M}, and given a subset $F_n' \subset F'$.  We define 
   \begin{align*}
    Y_n&{:=}\mathbb{E}(Y| \sigma \{ f'(Y) \, : \, f' \in F_n' \} )   \\
    Z_n&{:=}\mathbb{E}(X| \sigma \{ f'(Y) \, : \, f' \in F_n' \} ) \\
    Z&{:=}\mathbb{E}(X|Y).
   \end{align*}
We obtain 
    \begin{align*}
        \Vert \textup{cov}(X|Y)-\textup{cov}(X|Y_n) \Vert_{E' \to E} \leq  \Vert M \Vert_{F\to E}^2 \Vert \textup{cov}(Y) - \textup{cov}(Y_n) \Vert_{F' \to F}.
    \end{align*}
    \end{theorem}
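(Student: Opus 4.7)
The plan is to reduce the bound to a direct application of Theorem~\ref{Theorem:conditional_variance} for both $Y$ and $Y_n$, using the \emph{same} operator $M$ in both instances. The key observation is that if $M$ satisfies $MY=Z$, then it automatically satisfies $MY_n=Z_n$, so Assumption~M holds for the pair $(X,Y_n)$ with the very same $M$.

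To establish $MY_n=Z_n$, I would exploit that bounded linear operators commute with conditional expectation in the Bochner sense. Writing $\mathcal{A}_n:=\sigma\{f'(Y):f'\in F_n'\}$, this yields
\[
MY_n \;=\; M\,\mathbb{E}(Y|\mathcal{A}_n) \;=\; \mathbb{E}(MY|\mathcal{A}_n) \;=\; \mathbb{E}(Z|\mathcal{A}_n) \;=\; \mathbb{E}\bigl(\mathbb{E}(X|Y)\bigm|\mathcal{A}_n\bigr) \;=\; \mathbb{E}(X|\mathcal{A}_n),
\]
where the tower property applies since $\mathcal{A}_n\subseteq\sigma(Y)$. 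To identify the right-hand side with $Z_n=\mathbb{E}(X|Y_n)$, I would check that $\sigma(Y_n)=\mathcal{A}_n$: the inclusion $\sigma(Y_n)\subseteq\mathcal{A}_n$ is immediate since $Y_n$ is $\mathcal{A}_n$-measurable, while the reverse inclusion follows from $f'(Y_n)=\mathbb{E}(f'(Y)|\mathcal{A}_n)=f'(Y)$ for every $f'\in F_n'$, so that $\mathcal{A}_n=\sigma\{f'(Y_n):f'\in F_n'\}\subseteq\sigma(Y_n)$.

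With $MY_n=Z_n$ in hand, Theorem~\ref{Theorem:conditional_variance} applied to $(X,Y)$ and to $(X,Y_n)$ gives $\textup{cov}(X|Y)=\textup{cov}(X)-M\,\textup{cov}(Y)\,M'$ and $\textup{cov}(X|Y_n)=\textup{cov}(X)-M\,\textup{cov}(Y_n)\,M'$. Subtracting and taking operator norms yields
\[
\bigl\Vert \textup{cov}(X|Y)-\textup{cov}(X|Y_n) \bigr\Vert_{E'\to E} \;=\; \bigl\Vert M\bigl(\textup{cov}(Y_n)-\textup{cov}(Y)\bigr)M' \bigr\Vert_{E'\to E} \;\leq\; \Vert M\Vert_{F\to E}^{2}\,\Vert \textup{cov}(Y)-\textup{cov}(Y_n)\Vert_{F'\to F},
\]
using submultiplicativity of the operator norm together with $\Vert M'\Vert_{E'\to F'}=\Vert M\Vert_{F\to E}$.

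The main obstacle is the first step: alongside the identification $\sigma(Y_n)=\mathcal{A}_n$, one must also ensure that $(X,Y_n)$ is jointly Gaussian with zero mean so that Theorem~\ref{Theorem:conditional_variance} can be invoked a second time. Both facts follow from the Gaussian Hilbert space representation $Y_n=\sum_j r_j\int r_j Y\,\textup{d}\mu$ supplied by Theorem~\ref{Theorem:Conditioning orthogonal projection}, which exhibits $Y_n$ as a Gaussian function of $Y$; once this check is done, everything downstream is pure Banach space algebra.
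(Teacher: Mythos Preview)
Your proposal is correct and follows essentially the same strategy as the paper: reduce to $\textup{cov}(Z)-\textup{cov}(Z_n)$ via Theorem~\ref{Theorem:conditional_variance}, establish $MY_n=Z_n$, and bound by $\Vert M\Vert^2\Vert\textup{cov}(Y)-\textup{cov}(Y_n)\Vert$.

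The only noteworthy difference lies in how $MY_n=Z_n$ is obtained. The paper argues via the series representation of Theorem~\ref{Theorem:Conditioning orthogonal projection}: taking an ONB $(r_j)$ of $G_{Y_n}$, it writes $MY_n=M\sum_j r_j\int r_j Y\,\textup{d}\mu=\sum_j r_j\int r_j MY\,\textup{d}\mu=\sum_j r_j\int r_j Z\,\textup{d}\mu$ and identifies the last expression with $Z_n$. Your route through commutation of $M$ with the Bochner conditional expectation and the tower property is more direct and avoids unpacking the series; it is essentially the abstract version of the same computation. One small caveat: your identification $\sigma(Y_n)=\mathcal{A}_n$ only holds up to $\mu$-null sets, since $f'(Y_n)=f'(Y)$ is an almost-sure equality. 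This does not affect the conditional expectations (the paper handles exactly this issue with Lemma~\ref{lem:Z bis auf Nullmengen} and Theorem~\ref{thm:BedingteErwartungenHüte}), but you should invoke that machinery rather than claim a genuine equality of $\sigma$-algebras.
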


In summary, Theorem \ref{Theorem:main convergence rate generalized} states that if $M:F \to E$ is bounded then the convergence rates of $\textup{cov}(Y)-\textup{cov}(Y_n)$ translate into convergence rates of $\textup{cov}(X|Y)-\textup{cov}(X|Y_n)$. The following corollary demonstrates how this can be done by applying the $P$-greedy algorithm on $Y$.  
   
\begin{corollary}\label{Corollary:Polynomial Convergence Rate P-Greedy}
Assume that we are in the setting of Theorem \ref{Theorem:main convergence rate generalized} with \hyperref[Assumption:M]{Assumption M} and that the statement $\Vert\textup{cov}(Y)\Vert_{F'\to F} \leq 1$ holds true. 
If we obtain the set $F_n'$ by applying the weak-$P$-greedy method on $k_Y$. We obtain the inequality
    \begin{align}\label{eq:mainresult1}
        \Vert   \textup{cov}(X|Y) - \textup{cov}(X|Y_n) \Vert_{E' \to E} \leq  2 \Vert M \Vert_{F\to E}^2 \min_{1\leq m <n} 
\gamma^{-2}    d_m^\frac{2(n-m)}{n} ({\mathcal{F}}),
    \end{align} 
with $\mathcal{F}=\left\lbrace k_Y(\mycdot,f') \, \middle| \, f' \in B_{F'} \right\rbrace$.
Lastly assuming there exist $C,\alpha>0$ such that for each $n \in \mathbb{N}$ there exists a set $\mathfrak{F}_n' \subset F'$ with $|\mathfrak{F}_n'|=n$ and $Y_n^\star:= \mathbb{E}(Y| \sigma \{ f'(Y) \, : \, f' \in \mathfrak{F}_n' \} )$
\begin{equation}\label{eq:polynomial convergence rate}
\Vert \textup{cov}(Y)-\textup{cov}(Y_n^\star) \Vert_{F'\to F} \leq C n^{-\alpha}
\end{equation}
then the inequality
\begin{align*}
 \Vert   \textup{cov}(X|Y) - \textup{cov}(X|Y_n) \Vert_{E' \to E} \leq   \Vert M \Vert_{F\to E}^2  
2^{5\alpha+1}\gamma^{-2} C n^{-\alpha}.
\end{align*}
holds true.
\end{corollary}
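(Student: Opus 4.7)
The plan is to chain together Theorem \ref{Theorem:main convergence rate generalized}, Theorem \ref{Theorem: Kernel Conditioning}, a standard RKHS duality, and Corollary \ref{cor:convergence rate P-greedy}, and then invoke the polynomial-rate-preservation property of the weak greedy algorithm.

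First I would apply Theorem \ref{Theorem:main convergence rate generalized} to the greedy set $F_n' \subseteq B_{F'}$, which reduces the claim to an estimate on $\Vert\textup{cov}(Y)-\textup{cov}(Y_n)\Vert_{F'\to F}$. Then Theorem \ref{Theorem: Kernel Conditioning}, applied with $Y$ in the role of $X$ and $\mathcal{E}=F_n'$, identifies this quantity with $\sup_{f'\in B_{F'}}\Vert k_Y(\cdot,f')-\Pi_{H(F_n')}k_Y(\cdot,f')\Vert_{H_Y}^{2}$. A short reproducing-property and self-adjointness argument (namely $|g(f')-\Pi g(f')| = |\langle g,(I-\Pi)k_Y(\cdot,f')\rangle_{H_Y}|$) rewrites this supremum as $\sup_{g\in B_{H_Y}}\Vert g-\Pi_{H(F_n')}g\Vert_{C(B_{F'})}^{2}$; the hypothesis $\Vert\textup{cov}(Y)\Vert_{F'\to F}\leq 1$ ensures $k_Y(f',f')\leq 1$ so that $\mathcal{F}\subseteq B_{H_Y}$ and Corollary \ref{cor:convergence rate P-greedy} applies. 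Squaring its bound immediately yields \eqref{eq:mainresult1}.

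For the polynomial part, I would translate the hypothesis $\Vert\textup{cov}(Y)-\textup{cov}(Y_n^\star)\Vert_{F'\to F}\leq C n^{-\alpha}$ into a Kolmogorov-width bound by applying Theorem \ref{Theorem: Kernel Conditioning} a second time: the at most $n$-dimensional subspace $H(\mathfrak{F}_n')$ witnesses $d_n(\mathcal{F})^{2}\leq Cn^{-\alpha}$, hence $d_n(\mathcal{F})\leq\sqrt{C}\,n^{-\alpha/2}$. With a polynomial decay of the widths of this form in hand, the standard rate-preservation theorem for the weak greedy algorithm \citep{devore} converts it into a bound of the form $\sigma_n(\mathcal{F})\leq \gamma^{-1}2^{5\alpha/2+1/2}\sqrt{C}\,n^{-\alpha/2}$ on the greedy error in $H_Y$. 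Squaring this estimate and plugging back through \eqref{eq:mainresult1} then produces the asserted constant $\Vert M\Vert_{F\to E}^{2}\,2^{5\alpha+1}\gamma^{-2}C\,n^{-\alpha}$.

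The main obstacle is the rate-preservation step. Substituting $d_m\leq\sqrt{C}\,m^{-\alpha/2}$ directly into the min-formula $d_m^{(n-m)/n}$ of Corollary \ref{cor:convergence rate P-greedy} with the natural choice $m=\lceil n/2\rceil$ only yields $\sigma_n\lesssim n^{-\alpha/4}$, which after squaring is a factor of two too weak in the exponent. Recovering the sharp rate $\sigma_n\lesssim n^{-\alpha/2}$ requires the finer iterative argument of \cite{devore}, which combines the min-formula with a sub-multiplicative property of $\sigma_n$ across block sizes and drives the iteration to absorb the loss; carefully tracking constants through that argument is exactly what produces the explicit factor $2^{5\alpha+1}$ after squaring.
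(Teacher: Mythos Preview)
Your proposal is correct and follows essentially the same route as the paper's proof: reduce via Theorem \ref{Theorem:main convergence rate generalized}, identify $\Vert\textup{cov}(Y)-\textup{cov}(Y_n)\Vert$ with the squared power-function supremum via Theorem \ref{Theorem: Kernel Conditioning}, swap to $\sup_{g\in B_{H_Y}}\Vert g-\Pi g\Vert_{C(B_{F'})}^2$ by the reproducing-property duality (the paper outsources this to \cite[Lemma 2.3]{pgreedy}), apply Corollary \ref{cor:convergence rate P-greedy}, and for the polynomial rate invoke \cite[Corollary 3.3 (ii)]{devore} after bounding $d_n(\mathcal{F})^2$ via Theorem \ref{Theorem: Kernel Conditioning} again. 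Your discussion of why the naive $m=\lceil n/2\rceil$ substitution loses a factor of two in the exponent, and why the iterative argument of \cite{devore} is genuinely needed to recover it, is exactly the point the paper handles by citation.
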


Corollary \ref{Corollary:Polynomial Convergence Rate P-Greedy} shows that we obtain at least the optimal convergence rate of $\textup{cov}(Y)$ for $\textup{cov}(X|Y)$ by applying the $P$-greedy algorithm on $Y$. 
A short example when the Inequality \eqref{eq:polynomial convergence rate} is satisfied, is given by $T \subseteq \mathbb{R}^d$ compact, $F=C(T)$ and $W_Y$ is equal to a Sobolev space on $T$ with regularity $s>d/2$. Then the sampling inequalities in \citep{samplinginequalities} give \eqref{eq:polynomial convergence rate} with $\alpha=2s/d-1$.

\begin{remark}
If one replaces the Banach space $E$ by a Banach space $\tilde{E}$, as mentioned in Theorem \ref{Theorem:Extension of M_W} the convergence rate results from Theorem \ref{Theorem:main convergence rate generalized} still hold. However $\textup{cov}(X|Y)$ is not necessarly a mapping from $\tilde{E}'$ to $\tilde{E}$ but the difference $\textup{cov}(X|Y)-\textup{cov}(X|Y_n)$ is a mapping from $\tilde{E}'$ to $ \tilde{E}$. 
\end{remark} 

\section{Examples}\label{sec:examples}\label{sec:4}
In this section we give a few examples that show how to calculate $M_W$ and when $M_W$ can be extended and when not.
We begin with a positive example in which the spaces $W_X$ and $W_Y$ are well known and understood.

\begin{example}\label{example:4}
Let $E:=C([0,1])$, $X$ be the Brownian motion, and 
$L: C([0,1]) \to C([1/2,1])$ be the restriction operator on the intervall $[1/2,1]$, that is
\begin{align*}
Lf := f |_{[1/2,1]}.
\end{align*} 
Let us further consider $Y:= LX$. Then $W_X$ is an RKHS with kernel 
\begin{align*}
k_{W_X}(t,s)=\min (s,t)
\end{align*}
and its scalar product is given by 
\begin{align*}
\langle u,v \rangle_{W_X} = \int_0^1 u' (t) v' (t) \, \textup{d} t,
\end{align*}
see e.g.~\cite[Subsection 8.1.2]{DW}. 

We note that by $Y=LX$ we have that $W_Y=LW_X$ is an RKHS and its kernel is given by  
\begin{align*}
    k_{W_Y}(s,t) = \min(s,t)
\end{align*}
for all $s,t\in [a,b]$ with $a:= 1/2$ and $b:= 1$.
By Lemma \ref{lem:minimumkernel} we thus  have 
\begin{align*}
\langle u,v \rangle_{W_Y} =  2\cdot u(1/2)v(1/2) + \int_{1/2}^1 u' (t) v' (t) \, \textup{d} t
\end{align*} 
for all  for $u,v \in W_Y$.
Let us now calculate the adjoint of $L_W: W_X \to W_Y$, which is given  by
\begin{align*}
L_W w = w|_{[1/2,1]},
\end{align*} 
see Lemma \ref{lem:L_W = L restricted}.
To this end, we first note that for 
$u \in W_X$ and $v \in W_Y$
we have 
\begin{align*}
  2\cdot u(1/2)v(1/2) + \int_{1/2}^1 u' (t) v' (t) \, \textup{d} t 
  =
  \langle L_W u , v \rangle_{W_Y} 
  = 
  \langle u, L_W^* v \rangle_{W_X}
  =
  \int_0^1 u'(t) (L_W^*v)'(t) \, \textup{d} t.
\end{align*}
In particular, for $u\in W_X$ with $\textup{supp}(u) \subset [1/2,1]$ we can conclude that 
\begin{align*}
\int_{1/2}^1 u' (t) v' (t) \, \textup{d} t 
= 
\int_{1/2}^1 u' (t) (L_W^*v)' (t) \, \textup{d} t. 
\end{align*}
Recognizing that for all  $\tilde u \in L^2([1/2,1])$ we find an $u\in W_X$ with $\textup{supp}(u) \subset [1/2,1]$ and 
$u'=\tilde u$,
we conclude  $v'(s) =(L_W^*v)'(s)$ for almost all $s\in [1/2,1]$.  
Combining the calculations leads to
\begin{align}\label{eq:ScalarproductWithoutNoise}
 2\cdot u(1/2)v(1/2) = \int_0^{1/2} u'(t) (L_W^*v)'(t) \, \textup{d}t
\end{align} 
for all $u\in W_X$ and $v\in W_Y$.
Using the fundamental theorem of calculus and $u(0)=0$ we conclude
\begin{align*}
u(1/2)= \int_0^{1/2} u'(t) \, \textup{d} t .
\end{align*}
Combining this with \eqref{eq:ScalarproductWithoutNoise} leads to 
\begin{align*}
2 \int_0^{1/2} u'(t) v(1/2) \, \textup{d}t = \int_0^{1/2} u'(t) (L_W^*v)'(t) \, \textup{d}t.
\end{align*}
Again we can conclude that for almost all $s\in [0,1/2]$ we have $(L_W^*v)'(s)= 2\cdot v(1/2)$.
Putting these results together and taking into consideration that $L_W^*v$ has to be continuous with $L_W^*v(0)=0$, we end up with
\begin{align}\label{eq:LWAdjointBrownianMotion}
(L_W^*v)(s)= \begin{cases}
2 v(1/2) s, & \qquad \textup{for} \, \, s\leq 1/2 \\
v(s), & \qquad \textup{for} \, \, s>1/2.
\end{cases}
\end{align}
We note for $v\in W_Y$ we have $\inorm{L_w^*v}\leq \inorm v$, and therefore
 $L_W^*$ can be uniquely extended to a bounded linear operator $M: C([1/2,1]) \to C([0,1]) $.
 Finally, it is not hard to see that $Mv$ can be calculated as in \eqref{eq:LWAdjointBrownianMotion}.
\end{example}

In the following example, we add some noise to the Gaussian random variable of 
Example \ref{example:4}.

\begin{example}\label{example:5}
Let $X$ and $L$ be as in 
 Example \ref{example:4} and  
\begin{align*}
Y:=LX+N,
\end{align*}
where $N$ is a Gaussian random variable independent of $X$, whose  kernel is given by
\begin{align*}
k_{W_N}(s,t)=\sigma^2.
\end{align*}
This changes the kernel of $W_Y$ to 
\begin{align*}
k_{W_Y}(s,t)=\sigma^2+ \min(s,t),
\end{align*}
and by Lemma \ref{lem:minimumkernel} the scalar product is therefore given by 
\begin{align*}
\langle u, v \rangle_{W_Y} = \frac{u(1/2)v(1/2)}{1/2+\sigma^2}+ \int_{1/2}^1 u'(t) v'(t) \, \textup{d} t.
\end{align*}
Repeating the steps of  Example \ref{example:4} up to \eqref{eq:ScalarproductWithoutNoise},  
we end up with $v'(s) =(L_W^*v)'(s)$ for almost all $s\in [1/2,1]$ as well as
\begin{align*}
 \frac{ u(1/2)v(1/2)}{1/2+\sigma^2} = \int_0^{1/2} u'(t) (L_W^*v)'(t) \, \textup{d}t.
\end{align*} 
Again we use the fundamental theorem of calculus to conclude  
\begin{align*}
\frac{1}{1/2+\sigma^2} \int_0^{1/2} u'(t) v(1/2) \, \textup{d}t = \int_0^{1/2} u'(t) (L_W^*v)'(t) \, \textup{d}t.
\end{align*}
Thus for almost all $s \in [0,1/2] $ we have 
\begin{align*}
(L_W^*v)'(s)=\frac{v(1/2)}{1/2+\sigma^2}.
\end{align*}
Combining these considerations and respecting the continuity of $L_W^*v$ and $(L_W^*v)(0)=0$ we end up with
\begin{align*}
(L_W^*v)(s)= \begin{cases}
\frac{ v(1/2)}{1/2+\sigma^2} \cdot s , & \qquad \textup{for} \, \, s\leq 1/2 \\
 v(s)- \frac{\sigma^2}{1/2+\sigma^2} v(1/2), & \qquad \textup{for} \, \, s>1/2.
\end{cases}
\end{align*}
We note that $L_W^*$ can also be extended onto the set of continuous functions as in Example \ref{example:4}.
\end{example}

The last example provides a situation in which the operator $M_W$ \emph{cannot} be extended.

\begin{example}\label{example:Gegenbeispiel}
Let $E:=C^1([0,1])$ be the space of once continuously differentiable functions and $F:=C([0,1])$ be the space of continuous functions.
Moreover, let $X$ be an $E$-valued Gaussian random variable such that $W_X$ is dense in $E$ and 
\begin{align*}
Y:=\textup{Id} (X),
\end{align*}
where $\textup{Id}:E \to F$ is the embedding. 
Note that we have $\textup{Id}(W_X)=W_Y$, and therefore we can define $\textup{Id}_W:W_X \to W_Y$ as $\textup{Id}_W (w) = \textup{Id}(w)$ for all $w \in W_X$. For $w \in W_X$ and $g:=V_X^*w$ we thus find
\begin{align*}
L_W w = \hat{V}_Y \hat{V}_X^* w = \hat{V}_Y g = \int_\Omega g Y \, \textup{d} \mu = \int_\Omega g \textup{Id}(X) \, \textup{d} \mu = \textup{Id}_W \left( \int_\Omega g X \, \textup{d} \mu \right) = \textup{Id}_W (w).
\end{align*}
Moreover, setting $v := \textup{Id}_W (w) \in W_Y$ and $h:=V_Y^* v$, we find 
\begin{align*}
V_Y h = V_Y V_Y^* \textup{Id}_W(w) = \textup{Id}_W(w) =\hat{V}_Y g,
\end{align*}
where in the last step we re-used parts of the previous calculation. Applying $V_Y^*$ on both sides leads to $h=g$. 
Consequently we obtain
\begin{align*}
\Vert \textup{Id}_W(w) \Vert_{W_Y} = \Vert v \Vert_{W_Y} = \Vert h \Vert_{G_Y}= \Vert g \Vert_{G_X} = \Vert w \Vert_{W_X}.
\end{align*}
In other words, $\textup{Id}_W$ is an isometric isomorphism and thus $M_W=L_W^*=\textup{Id}_W^*=\textup{Id}_W^{-1}$. 
Now observe that $\textup{Id}_W^{-1}:W_Y \to W_X$ is given by $\textup{Id}_W^{-1} w = w$, where both side are viewed as functions.
Moreover, since $W_X$ is dense in $E$ and $E$ is dense in $F$ we quickly see that $W_Y$ is dense in $F$.
Consequently the mapping $\textup{Id}_W^{-1}$ is not continuous with respect to the norms $\Vert \mycdot \Vert_{E}$ and $\Vert \mycdot \Vert_F$ and therefore $M_W$ cannot be extended to a desired $M$.  

In view of Theorem \ref{Theorem:Extension of M_W} we note that for $\tilde{E}:=F$ we obtain $M w=w$ as a continuous linear mapping, see Example \ref{example:1}. 
\end{example}

\section{Proofs}\label{sec:proofs}\label{sec:5}

\begin{lemma}\label{lem:Unitar}
     Let \hyperref[Assumption:A]{Assumption A} be satisfied and $   V_X: G_X \to W_X $  be the map given by
         \begin{align*}
       V_X g := \int_\Omega g X \, \textup{d} \mu \, .
    \end{align*}
Then the following statements hold true:
\begin{enumerate}
    \item $V_X$ is an isometric isomorphism and its  adjoint $V_X^*$ of $V_X$ is given by $V_X^* = V_X^{-1}$.
    \item $W_X$ is a Hilbert space.
    \item The operator $V_X$ can be extended to an operator $ \hat{V}_X: L^2(\mathcal{A}) \to W_X $ given by 
 \begin{align*}
  \hat{V}_X   g := \int_\Omega g X \, \textup{d} \mu
 \end{align*} 
 and we have  $\textup{ker}(\hat{V}_X) = G_X^\perp$. Additionally its adjoint $\hat{V}_X^*$ of $\hat{V}_X$ is given by $\hat{V}_X^*=V_X^{-1}$.
\end{enumerate}
 \end{lemma}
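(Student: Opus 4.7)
The plan is to prove the three assertions in order, since (2) follows from (1) by transport of Hilbert space structure, and (3) uses both (1) and (2).

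For (1) the first step is to establish injectivity of $V_X$. If $V_X g = 0$, then for every $e'\in E'$ we have
\begin{align*}
0 = e'(V_X g) = \int_\Omega g\, e'(X)\, \textup{d}\mu = \langle g, e'(X)\rangle_{L^2(\mathcal A)},
\end{align*}
where the interchange of $e'$ with the Bochner integral is the standard fact. Since $\{e'(X): e'\in E'\}$ spans a dense subset of $G_X$ by definition, $g$ is orthogonal to $G_X$; as $g\in G_X$ we get $g=0$. Surjectivity of $V_X:G_X\to W_X$ holds by the very definition of $W_X$. Injectivity further implies that, for any $e \in W_X$, the preimage $V_X^{-1}(\{e\})$ is the singleton $\{g\}$, so the infimum defining $\snorm{e}_{W_X}$ is attained and equals $\snorm{g}_{G_X}$. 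Thus $V_X$ is an isometric linear bijection.

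For (2), transport the inner product of $G_X$ via $\langle V_X g_1, V_X g_2\rangle_{W_X} := \langle g_1,g_2\rangle_{G_X}$; this is well-defined by the bijectivity of $V_X$, it induces the norm in \eqref{Norm} by part (1), and completeness of $W_X$ follows because $G_X$ is a closed subspace of $L^2(\mathcal A)$ and $V_X$ is an isometric bijection. Hence $W_X$ is a Hilbert space and $V_X$ is unitary, so $V_X^* = V_X^{-1}$.

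For (3), decompose any $g\in L^2(\mathcal A)$ as $g = g_1 + g_2$ with $g_1 \in G_X$ and $g_2\in G_X^\perp$. Since $e'(X)\in G_X$ for every $e'\in E'$, the same interchange as in paragraph 1 gives $e'(\int_\Omega g_2 X\,\textup{d}\mu)=\langle g_2, e'(X)\rangle_{L^2(\mathcal A)}=0$ for every $e'\in E'$; Hahn-Banach then forces $\int_\Omega g_2 X\,\textup{d}\mu = 0$, so $\hat V_X g = V_X g_1 \in W_X$ and $\hat V_X$ is well-defined. The kernel statement is immediate: $G_X^\perp \subseteq \ker(\hat V_X)$ by construction, and conversely $\hat V_X g=0$ implies $V_X g_1 = 0$, hence $g_1=0$ by the injectivity from (1), so $g = g_2 \in G_X^\perp$. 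For the adjoint, note that $\hat V_X = V_X \circ P_{G_X}$ where $P_{G_X}:L^2(\mathcal A)\to G_X$ is the orthogonal projection. Since the inclusion $G_X \hookrightarrow L^2(\mathcal A)$ is the adjoint of $P_{G_X}$ and $V_X^*=V_X^{-1}$ by (2), we get $\hat V_X^* = P_{G_X}^*\, V_X^{-1} = V_X^{-1}$, viewed as an operator $W_X\to L^2(\mathcal A)$ with range in $G_X$.

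The routine parts are linearity and the algebraic manipulations; the two points requiring a little care are the density argument needed for the injectivity of $V_X$ in paragraph 1, and the Hahn-Banach/interchange argument ensuring $\hat V_X g \in W_X$ with the correct kernel in paragraph 3. Everything else is bookkeeping via the isometry.
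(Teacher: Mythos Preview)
Your proof is correct and follows essentially the same approach as the paper: injectivity via $e'(V_Xg)=\langle g,e'(X)\rangle$ together with density, surjectivity and isometry from the definition of $W_X$, the orthogonal decomposition $g=g_X+g_X^\perp$ combined with Hahn--Banach for the extension and kernel, and the adjoint identity $\hat V_X^*=V_X^{-1}$. The only cosmetic difference is that you obtain $\hat V_X^*$ by factoring $\hat V_X=V_X\circ P_{G_X}$ and taking adjoints, whereas the paper verifies $\langle g,\hat V_X^*w\rangle=\langle g,V_X^{-1}w\rangle$ directly; these are the same computation.
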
 
 
\begin{proof}
\ada i
     First we note that  $\text{Im} (V_X)=W_X$ directly follows from the definition of $W_X$, as stated in equation \eqref{definiton}. 
     Moreover, for the proof of the injectivity we  fix a $g \in \text{ker}(V_X)$. Then we have
     \begin{align*}
       0 =  V_X g = \int_\Omega g X \, \textup{d} \mu,
     \end{align*} 
     and consequently, for all $e' \in E'$, we obtain
     \begin{align*}
       0 =   e'(V_Xg) = \int_\Omega g e'(X) \, \textup{d} \mu.
     \end{align*} 
     This in turn implies $g \in G_X^\perp = \{0\}$, where $G_X^\perp$ is the orthogonal complement of $G_X$ in itself.   
  
    Since we have just seen that $V_X$ is bijective, we now conclude that it is isometric
    by the very definition \eqref{Norm} of the norm of $W_X$.

\ada {ii} Directly follows from \emph{i)}.


\ada {iii} We now write 
    $G_X^\perp$ for the orthogonal complement of $G_X$ in $L^2({\mathcal A})$. Then,  for all $g \in L^2({\mathcal A})$, there exist unique $g_X \in G_X$ and $g_X^\perp \in G_X^\perp$ such that 
  \begin{align}\label{eq:g=gX+gXperp}
      g=g_X + g_X^\perp \, .
  \end{align}
Note that for all $e' \in E'$ we have 
\begin{align*}
   e'\left( \int_\Omega  g_X^\perp X \, \textup{d} \mu \right) = \int_\Omega  g_X^\perp e'(X)  \textup{d} \mu = 0,
\end{align*}
and hence Hahn-Banach's theorem shows 
\begin{align*}
     \int_\Omega  g_X^\perp X \, \textup{d} \mu  = 0\, .
\end{align*}
Using $g=g_X + g_X^\perp$ we then see that $\hat V_X g \in W_X$ and $\textup{ker}(\hat{V}_X) = G_X^\perp$. 

To prove the last assertion, we fix a $w\in W_X$ and a $g\in L^2(\mathcal{A})$ with 
\eqref{eq:g=gX+gXperp}.
Then we have 
\begin{align*}
    \langle g, \hat V_X^* w\rangle_{L^2({\mathcal A})}
    = 
    \langle \hat V_Xg,   w\rangle_{W_X}
    = 
    \langle  V_Xg_X,   w\rangle_{W_X}
    = 
    \langle g, V_X^{-1} w\rangle_{L^2({\mathcal A})}\, ,
\end{align*}
where in the second step we used $\textup{ker}(\hat{V}_X) = G_X^\perp$ and
in the last step we used \emph{i)}.
 \end{proof}

 \begin{lemma}\label{lem:Conditioning X=Z}
Let \hyperref[Assumption:A]{Assumption A} be satisfied and $\mathcal{C} \subseteq \mathcal{A}$ be a sub-$\sigma$-algebra. Then for all $g \in L^2(\mathcal{C})$ the equality
     \begin{align}\label{eq:Conditioning X=Z}
         \int_\Omega g \mathbb{E}(X|\mathcal{C}) \, \textup{d} \mu = \int_\Omega g X \, \textup{d} \mu
     \end{align}
     is true. Moreover for all $w \in W_Y$ we have $L_W^* w \in W_Z$, where $L_W=\hat{V}_Y \hat{V}_X^*$ as in \eqref{eq:Definition of L_W as commutative diagramm}. 
 \end{lemma}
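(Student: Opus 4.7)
The proof splits naturally into the two assertions, and the first one will feed directly into the second.

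For \eqref{eq:Conditioning X=Z}, my plan is a standard three-step measure-theoretic approximation. First, for $g=\mathbf{1}_A$ with $A\in\mathcal{C}$, the identity is nothing but the defining property of $\mathbb{E}(X|\mathcal{C})$ recalled in the preliminaries. Linear combinations immediately give the equality for $\mathcal{C}$-simple functions, after which I want to pass to general $g\in L^2(\mathcal{C})$ by density. The required Bochner integrability on both sides is guaranteed by Fernique's theorem ($X\in L^2(\mathcal{A},E)$), and a Cauchy--Schwarz estimate
\[
\bnorm{\textstyle\int_\Omega (g_n-g) X \, \textup{d}\mu}_E \leq \Vert g_n-g\Vert_{L^2(\mathcal{A})} \Vert X\Vert_{L^2(\mathcal{A},E)}
\]
controls the difference of both sides in norm along an approximating sequence.

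For the second claim I first identify $L_W^*$ explicitly. Taking Hilbert-space adjoints in the factorization $L_W=\hat{V}_Y\hat{V}_X^*$ from \eqref{eq:Definition of L_W as commutative diagramm}, and using that $(\hat V_X^*)^*=\hat V_X$ as a bounded operator $L^2(\mathcal{A})\to W_X$, yields $L_W^*=\hat{V}_X\hat{V}_Y^*$ as an operator $W_Y\to W_X$. For $w\in W_Y$, I set $g:=\hat{V}_Y^* w$; by Lemma \ref{lem:Unitar} iii), $g=V_Y^{-1}w \in G_Y \subseteq L^2(\sigma(Y))$, so that
\[
L_W^* w \;=\; \hat{V}_X g \;=\; \int_\Omega g X \, \textup{d}\mu.
\]
Now I invoke part \eqref{eq:Conditioning X=Z} with $\mathcal{C}:=\sigma(Y)$ and $\mathbb{E}(X|\mathcal{C})=Z$: since $g$ is $\sigma(Y)$-measurable,
\[
\int_\Omega g X \, \textup{d}\mu \;=\; \int_\Omega g Z \, \textup{d}\mu \;=\; \hat{V}_Z g.
\]
Because $\hat{V}_Z$ maps $L^2(\mathcal{A})$ into $W_Z$ (Lemma \ref{lem:Unitar} iii) applied to $Z$, which is itself a Gaussian random variable by the remark following Assumption A), this forces $L_W^* w \in W_Z$, as required.

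I expect the main subtlety to be bookkeeping around the three interconnected Hilbert spaces and their adjoints: $\hat{V}_Y^*$ lands in $L^2(\mathcal{A})$ through the identification with $G_Y$, and one must recognize that while $L_W^*$ a priori only takes values in $W_X$, the equality from \eqref{eq:Conditioning X=Z} lets us swap $X$ for $Z$ inside the integral against the $\sigma(Y)$-measurable function $g$, which is exactly what places the output in the smaller space $W_Z$. Everything else is routine manipulation of adjoints.
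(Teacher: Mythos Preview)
Your proposal is correct, and for the second assertion it is essentially identical to the paper's argument: both compute $L_W^*=\hat V_X \hat V_Y^*$, set $g=\hat V_Y^* w\in G_Y$, and then use \eqref{eq:Conditioning X=Z} with $\mathcal{C}=\sigma(Y)$ to rewrite $\int_\Omega g X\,\textup{d}\mu$ as $\int_\Omega g Z\,\textup{d}\mu\in W_Z$.

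For the first assertion the approaches differ. The paper simply invokes \cite[Proposition 2.6.31]{martingal}, applied with the bounded bilinear map $\beta:\mathbb{R}\times E\to E$, $\beta(a,e)=ae$, which yields the identity in one line. You instead give a self-contained argument: the identity for indicators is the defining property of conditional expectation, linearity extends it to $\mathcal{C}$-simple functions, and density in $L^2(\mathcal{C})$ plus the Cauchy--Schwarz bound closes the argument. Your route is slightly longer but entirely elementary and avoids the external reference; the paper's route is terser but relies on a black-box result about conditional expectations of bilinear expressions. One minor point worth making explicit in your write-up is that the limiting step also requires $\mathbb{E}(X|\mathcal{C})\in L^2(\mathcal{A},E)$, which follows from the $L^2$-contractivity of conditional expectation, so that the same Cauchy--Schwarz estimate applies to the left-hand side as well.
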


 \begin{proof}
 Equation \eqref{eq:Conditioning X=Z} follows by using \cite[Proposition 2.6.31]{martingal} with 
\begin{align*}
    \beta: \mathbb{R} \times E &\to E \\
    (a,e) &\mapsto ae.
\end{align*} 
To prove the second assertion we fix a $w\in W_Y$ and set $\mathcal{C}:=\sigma(Y)$. For $g:= V_Y^* w \in G_Y$, we then have $w=\int_\Omega g Y \, \textup{d} \mu$, which in turn leads to  
\begin{align*}
L_W^* w = \hat{V}_X \hat{V}_Y^* w =\int_\Omega g X \, \textup{d} \mu = \int_\Omega g Z \, \textup{d} \mu \in W_Z\, ,
\end{align*}    
where in the last equation we used \eqref{eq:Conditioning X=Z}.
\end{proof}

 \begin{lemma}\label{lem:L_W = L restricted}
Let \hyperref[Assumption:A]{Assumption A} be satisfied and $L:E \to F$ be a bounded operator. Moreover, let 
\begin{align*}
Y:= LX +N,
\end{align*}
with $N:\Omega \to F$ being a Gaussian random variable independent of $X$. Then we have
\begin{align*}
L_W = L_{|{W_X}}.
\end{align*}
\end{lemma}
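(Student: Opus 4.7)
The plan is to unfold the definition of $L_W$ via the commutative diagram \eqref{eq:Definition of L_W as commutative diagramm}, evaluate it on an arbitrary $w\in W_X$, and use independence of $N$ from $X$ to discard the noise contribution.

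First I would fix $w\in W_X$ and set $g := \hat{V}_X^*w = V_X^{-1}w \in G_X$, which by Lemma \ref{lem:Unitar} \emph{i)} satisfies $w = V_X g = \int_\Omega g X \intd \mu$. Applying $\hat{V}_Y$ and using $Y = LX + N$ gives
\begin{align*}
L_W w \;=\; \hat{V}_Y \hat{V}_X^* w \;=\; \int_\Omega g\,Y \intd \mu \;=\; \int_\Omega g\,(LX) \intd \mu \;+\; \int_\Omega g\,N \intd \mu.
\end{align*}
Since $L:E\to F$ is bounded and linear, it commutes with Bochner integration, so $\int_\Omega g(LX) \intd \mu = L\int_\Omega gX\intd \mu = Lw$. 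Thus the proof reduces to showing $\int_\Omega g N \intd \mu = 0$.

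For that, I would argue as follows. By definition $G_X$ is the $L^2$-closure of $\{e'(X):e'\in E'\}$, and by assumption $N$ is independent of $X$, hence independent of $e'(X)$ for every $e'\in E'$; independence is preserved under $L^2$-convergence of $g$ against a fixed random variable $f'(N)$ (for any $f'\in F'$), so $g$ and $f'(N)$ are independent and both centered. Therefore $\mathbb{E}(g\,f'(N)) = \mathbb{E}(g)\mathbb{E}(f'(N)) = 0$, which means
\begin{align*}
f'\!\left(\int_\Omega g\,N \intd \mu\right) \;=\; \int_\Omega g\, f'(N) \intd \mu \;=\; 0 \qquad \text{for all } f'\in F'.
\end{align*}
Hahn--Banach then forces $\int_\Omega g N\intd\mu = 0$, and combining everything yields $L_W w = Lw = L|_{W_X} w$.

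The only subtlety I anticipate is the justification that independence of $N$ from each $e'(X)$ promotes to independence of $N$ from every $g\in G_X$; this is a routine $L^2$-density argument (closing $\{e'(X):e'\in E'\}$ and noting that joint characteristic functions factor in the limit), but is the one place where the assumption ``$N$ independent of $X$'' must be used carefully, since the rest of the computation is essentially formal manipulation of the Bochner integral.
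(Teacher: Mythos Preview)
Your proposal is correct and follows essentially the same route as the paper: fix $g\in G_X$ (equivalently $w=V_Xg$), unfold $L_W=\hat V_Y\hat V_X^*$, split $Y=LX+N$, and use that $g$ is centered together with independence of $N$ from $X$ to kill $\int_\Omega gN\intd\mu$. The paper simply asserts this last vanishing in one line, whereas you spell out the Hahn--Banach reduction and the $L^2$-density argument; but no new idea is involved.
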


\begin{proof}
Let us fix a $g \in G_X$, since $g$ is centered and $N$ is independent of $X$, we then have $\int_\Omega g N \, \textup{d} \mu =0$, and hence we obtain 
\begin{align*}
L_{|{W_X}} \left( \int_\Omega g X \, \textup{d} \mu \right) 
= \int_\Omega g LX \, \textup{d} \mu 
= \int_\Omega g (LX + N)\, \textup{d} \mu 
= \int_\Omega g Y \, \textup{d} \mu  
&= \hat{V}_Y g \\
&= L_W \left(\int_\Omega g X \, \textup{d} \mu \right)\, ,
\end{align*}
where in the last line we used the definition of $L_W$ as in \eqref{eq:Definition of L_W as commutative diagramm}, i.e. $L_W=\hat{V}_Y \hat{V}_X^*$.
\end{proof}

The next Lemma can be found for example in \cite[Proposition 2.6.31]{martingal}. 
\begin{lemma}\label{Lemma: e' commutes with conditioning}
    Let $E$ be a separable Banach space, $X \in L^1(\mathcal{A},E)$, and $\mathcal{B} \subseteq \mathcal{A}$ be a $\s$-algebra. Then for all $e'\in E'$ there exists an $N_{e'} \in \mathcal{A}$ with $\mu(N_{e'})=0$ such that
    \begin{align}\label{first}
        e'\left( \mathbb{E}(X|\mathcal{B}) \right) (\omega) = \mathbb{E}(e'(X)|\mathcal{B}) (\omega)
    \end{align} 
    holds true for all $\omega \in \Omega \setminus N_{e'}$.
\end{lemma}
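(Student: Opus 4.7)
The plan is to use the defining property of the Bochner conditional expectation together with the fact that continuous linear functionals commute with the Bochner integral, and then invoke almost-sure uniqueness of the real-valued conditional expectation.

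First, I would fix $e' \in E'$ and verify that $e' \circ \mathbb{E}(X|\mathcal{B}) : \Omega \to \mathbb{R}$ is $\mathcal{B}$-measurable. This is immediate: $\mathbb{E}(X|\mathcal{B})$ is $\mathcal{B}$-strongly measurable by definition of the Bochner conditional expectation, and $e'$ is continuous, so the composition is $\mathcal{B}$-measurable. Moreover, since $X \in L^1(\mathcal{A}, E)$, we have $e'(X) \in L^1(\mathcal{A})$ with $|e'(X)| \leq \snorm{e'}_{E'} \snorm{X}_E$, and the same bound applied to $\mathbb{E}(X|\mathcal{B})$ shows that $e'(\mathbb{E}(X|\mathcal{B})) \in L^1(\mathcal{B})$.

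Next, I would check the integral characterization. For every $A \in \mathcal{B}$, the fact that $e'$ commutes with the Bochner integral (see e.g.~\cite[Chapter 1]{martingal}) yields
\begin{align*}
\int_A e'(\mathbb{E}(X|\mathcal{B})) \, \textup{d} \mu
= e'\!\left( \int_A \mathbb{E}(X|\mathcal{B}) \, \textup{d} \mu \right)
= e'\!\left( \int_A X \, \textup{d} \mu \right)
= \int_A e'(X) \, \textup{d} \mu,
\end{align*}
where the middle step is the defining property of $\mathbb{E}(X|\mathcal{B})$. Hence $e'(\mathbb{E}(X|\mathcal{B}))$ is a $\mathcal{B}$-measurable, integrable function satisfying the defining property of $\mathbb{E}(e'(X)|\mathcal{B})$.

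Finally, by the almost-sure uniqueness of the real-valued conditional expectation, there exists a $\mu$-null set $N_{e'} \in \mathcal{A}$ on whose complement both versions agree pointwise, giving \eqref{first}. There is no substantial obstacle here; the only point worth emphasizing is that the exceptional null set $N_{e'}$ genuinely depends on $e'$, because we are equating two real-valued random variables defined only up to indistinguishability, and we do not (and cannot in general) claim a single null set that works simultaneously for all $e' \in E'$. This dependence is what the statement of the lemma records.
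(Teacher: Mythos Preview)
Your proof is correct and is the standard argument for this fact. The paper does not give its own proof but simply cites \cite[Proposition 2.6.31]{martingal}, so your direct verification via the defining property of the Bochner conditional expectation and almost-sure uniqueness is exactly what underlies that reference.
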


\begin{lemma}\label{lem:conditional variance schwache formulierung}
     Under \hyperref[Assumption:A]{Assumption A} the following equality holds true for all $e' \in E'$
     \begin{align*}
        \langle \textup{cov}(X|Y)e' , e' \rangle_{E,E'} = \textup{cov}(e'(X)|Y).
     \end{align*} 
\end{lemma}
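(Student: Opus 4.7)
My plan is to reduce the claim to the fact that bounded linear functionals commute with the Bochner conditional expectation, in the spirit of Lemma \ref{Lemma: e' commutes with conditioning}. First, for each $e' \in E'$ I introduce the evaluation functional $\phi_{e'}: E \check{\otimes} E \to \mathbb{R}$ defined on elementary tensors by $\phi_{e'}(f_1 \otimes f_2) := e'(f_1) e'(f_2)$. Identifying $u \in E \check{\otimes} E$ with the corresponding operator $E' \to E$, one has $\phi_{e'}(u) = \langle u e', e' \rangle_{E,E'}$, and the estimate $|\phi_{e'}(u)| \leq \Vert e' \Vert_{E'}^2 \Vert u \Vert_\epsilon$, which is obvious on elementary tensors and extends by density, shows that $\phi_{e'}$ is a bounded linear functional on $E \check{\otimes} E$.

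Next, I set $T := (X - \mathbb{E}(X|Y)) \otimes (X - \mathbb{E}(X|Y))$, which lies in $L^1(\mathcal{A}, E \check{\otimes} E)$ by Fernique's theorem and Hölder's inequality (note that $X-\mathbb{E}(X|Y)$ is jointly Gaussian with $Y$ under \hyperref[Assumption:A]{Assumption A}). The same argument as in Lemma \ref{Lemma: e' commutes with conditioning}, now applied to the separable Banach space $E \check{\otimes} E$ with $\phi_{e'}$ in place of $e'$, yields
\begin{align*}
\phi_{e'}\bigl(\mathbb{E}(T \mid Y)\bigr) = \mathbb{E}\bigl(\phi_{e'}(T) \mid Y\bigr) \qquad \mu\text{-a.s.}
\end{align*}

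Finally, I identify both sides with the quantities in the claim. By the definition of the conditional covariance, $\mathbb{E}(T \mid Y) = \textup{cov}(X|Y)$, so the left-hand side equals $\langle \textup{cov}(X|Y) e', e' \rangle_{E, E'}$. For the right-hand side, $\phi_{e'}(T) = \bigl(e'(X - \mathbb{E}(X|Y))\bigr)^2$, and by Lemma \ref{Lemma: e' commutes with conditioning} we have $e'(\mathbb{E}(X|Y)) = \mathbb{E}(e'(X)|Y)$ almost surely, so $\phi_{e'}(T) = (e'(X) - \mathbb{E}(e'(X)|Y))^2$. Taking conditional expectation given $Y$ produces $\textup{cov}(e'(X)|Y)$ by its very definition in the Preliminaries. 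The main obstacle is the promotion of Lemma \ref{Lemma: e' commutes with conditioning} from a functional of the form $e' \in E'$ to the functional $\phi_{e'}$ on $E \check{\otimes} E$; this is a standard consequence of the construction of the Bochner conditional expectation via approximation by simple functions combined with the continuity of $\phi_{e'}$, but relies on $E \check{\otimes} E$ being separable, which has to be verified from the separability of $E$.
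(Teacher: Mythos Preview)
Your proof is correct and follows essentially the same idea as the paper's: both arguments hinge on commuting bounded linear maps through the Bochner conditional expectation of the $E\check\otimes E$-valued random variable $(X-\mathbb{E}(X|Y))\otimes(X-\mathbb{E}(X|Y))$, together with $e'(\mathbb{E}(X|Y))=\mathbb{E}(e'(X)|Y)$. The only cosmetic difference is that you bundle the two contractions into the single functional $\phi_{e'}\in (E\check\otimes E)'$ and invoke Lemma~\ref{Lemma: e' commutes with conditioning} once on the space $E\check\otimes E$, whereas the paper contracts in two steps (first via the bounded map $E\check\otimes E\to E$, $f_1\otimes f_2\mapsto e'(f_1)f_2$, then via $e'$ on $E$); your route stays strictly within the literal hypotheses of the lemma once you note, as you do, that $E\check\otimes E$ is separable (finite sums of elementary tensors with factors in a countable dense subset of $E$ are dense).
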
 
\begin{proof}
First we set $X_0 :=X-\mathbb{E}(X|Y)$.
We apply Lemma \ref{Lemma: e' commutes with conditioning} thrice to obtain 
\begin{align*}
\langle    \textup{cov}(X|Y) e' , e' \rangle_{E,E'} &= \left\langle e'\left(  \mathbb{E}((X-\mathbb{E}(X|Y)) \otimes (X-\mathbb{E}(X|Y)) | Y) \right) ,e' \right\rangle_{E,E'} \\
&=\left\langle e'\left(  \mathbb{E}( X_0 \otimes X_0 | Y) \right) ,e' \right\rangle_{E,E'} \\
&= \langle \mathbb{E}( e'(X_0) X_0 | Y) , e' \rangle_{E,E'} \\
&= e' \left( \mathbb{E}( e'(X_0) X_0) | Y) \right) \\
&= \mathbb{E}( e'(X_0) e'(X_0) | Y) \\
&= \mathbb{E}( (e'(X)-\mathbb{E}(e'(X)|Y))^2 | Y) \\
&= \textup{cov}(e'(X)|Y).
\end{align*}
\end{proof} 
The next theorem can be found in \cite[Theorem 3.3.2]{martingal}.
\begin{theorem}\label{martingal}
    Let $(\mathcal{A}_n)_{n\in \mathbb{N}}$ be a filtration in the probability space $(\Omega, \mathcal{A},\mu)$ and we define $\mathcal{A}_\infty:= \sigma\left( \mathcal{A}_n \, : \, n \geq 1 \right)$. Then for all $p \in [1,\infty)$. 
         and all $X \in L^p(\mathcal{A},E)$, we have  
        \begin{align*}
            \underset{n \to \infty}{\lim} \mathbb{E} (X | \mathcal{A}_n) = \mathbb{E}(X | \mathcal{A}_\infty )\, ,
        \end{align*} 
where the convergence is almost everywhere and in the norm of $L^p(\mathcal{A},E)$.
\end{theorem}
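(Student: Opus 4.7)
The plan is to establish the result by approximation from ``eventually measurable'' simple functions, handling $L^p$ convergence and almost-sure convergence separately but along parallel lines.

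First I would perform a standard reduction. By the tower property, $\mathbb{E}(X|\mathcal{A}_n) = \mathbb{E}(\mathbb{E}(X|\mathcal{A}_\infty)|\mathcal{A}_n)$, so we may assume $X \in L^p(\mathcal{A}_\infty,E)$ and aim to show $\mathbb{E}(X|\mathcal{A}_n) \to X$. The key density fact I would use is that the class $\mathcal{S}$ of $E$-valued simple functions of the form $f = \sum_{i=1}^k c_i \eins_{A_i}$ with $c_i \in E$ and $A_i \in \bigcup_m \mathcal{A}_m$ is dense in $L^p(\mathcal{A}_\infty,E)$. This follows from the definition of $\mathcal{A}_\infty = \sigma(\bigcup_m \mathcal{A}_m)$ together with separability of $E$ and Pettis measurability, mimicking the proof that simple functions are dense in the scalar Bochner space. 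For $f \in \mathcal{S}$ the statement of the theorem is trivial: once $n$ is so large that all the sets $A_i$ lie in $\mathcal{A}_n$, the function $f$ is already $\mathcal{A}_n$-measurable, so $\mathbb{E}(f|\mathcal{A}_n)=f$ for all sufficiently large $n$, giving both $L^p$ and almost-everywhere convergence without any work.

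For the passage to general $X$ in $L^p$ I would use the contraction property of conditional expectation, namely $\snorm{\mathbb{E}(\cdot|\mathcal{A}_n)}_{L^p(\mathcal{A},E)\to L^p(\mathcal{A},E)} \leq 1$, which follows from the conditional Jensen inequality $\snorm{\mathbb{E}(X|\mathcal{A}_n)}_E \leq \mathbb{E}(\snorm{X}_E|\mathcal{A}_n)$. Given $\varepsilon>0$, pick $f\in\mathcal{S}$ with $\snorm{X-f}_{L^p(\mathcal{A},E)} \leq \varepsilon$ and split
\begin{align*}
\snorm{\mathbb{E}(X|\mathcal{A}_n)-X}_{L^p(\mathcal{A},E)}
&\leq \snorm{\mathbb{E}(X-f|\mathcal{A}_n)}_{L^p(\mathcal{A},E)}
 + \snorm{\mathbb{E}(f|\mathcal{A}_n)-f}_{L^p(\mathcal{A},E)}
 + \snorm{f-X}_{L^p(\mathcal{A},E)} \\
&\leq 2\varepsilon + \snorm{\mathbb{E}(f|\mathcal{A}_n)-f}_{L^p(\mathcal{A},E)},
\end{align*}
and let $n\to\infty$ using the trivial case.

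For almost-everywhere convergence, the same triangle-inequality scheme works, but the error term $\snorm{\mathbb{E}(X-f|\mathcal{A}_n)}_E$ must be controlled uniformly in $n$. Here I would use the conditional Jensen bound to dominate $\snorm{\mathbb{E}(X-f|\mathcal{A}_n)}_E$ by the nonnegative scalar martingale $\mathbb{E}(\snorm{X-f}_E|\mathcal{A}_n)$, and then invoke Doob's maximal inequality for this scalar martingale to obtain, for $p>1$,
\begin{align*}
\mu\Bigl(\sup_{n}\snorm{\mathbb{E}(X-f|\mathcal{A}_n)}_E > \lambda\Bigr)
\leq \lambda^{-p}\Bigl(\tfrac{p}{p-1}\Bigr)^p \snorm{X-f}_{L^p(\mathcal{A},E)}^p,
\end{align*}
with the usual $\lambda^{-1}\snorm{X-f}_{L^1(\mathcal{A},E)}$ bound in the case $p=1$. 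Choosing a sequence $f_k\in\mathcal{S}$ with $\snorm{X-f_k}_{L^p(\mathcal{A},E)}\to 0$ fast enough (e.g.\ $\leq 2^{-k}$) and applying Borel--Cantelli shows that $\limsup_n \snorm{\mathbb{E}(X|\mathcal{A}_n)-X}_E \leq 2\limsup_n \mathbb{E}(\snorm{X-f_k}_E|\mathcal{A}_n)$ tends to $0$ almost everywhere.

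The main obstacle is the almost-everywhere part: one needs the vector-valued maximal inequality, which I would derive from the scalar maximal inequality via the conditional Jensen estimate, and one has to be careful that simple functions in $\mathcal{S}$ (as opposed to $\mathcal{A}_\infty$-simple functions) are dense in $L^p(\mathcal{A}_\infty,E)$. Once these two ingredients are in place, the rest is routine $3\varepsilon$-bookkeeping.
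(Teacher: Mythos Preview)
The paper does not actually prove this theorem: it is stated with the preface ``The next theorem can be found in \cite[Theorem 3.3.2]{martingal}'' and is used as a black box. So there is no proof in the paper to compare against.

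That said, your sketch is a correct and standard route to the vector-valued martingale convergence theorem. The reduction to $X\in L^p(\mathcal{A}_\infty,E)$ via the tower property, the density of simple functions measurable with respect to $\bigcup_m\mathcal{A}_m$, the contraction bound for the $L^p$ part, and the passage through conditional Jensen to reduce the maximal function to the scalar Doob inequality are exactly the ingredients used in textbook treatments (including the reference the paper cites). One small comment: for the almost-everywhere part you do not need the Borel--Cantelli detour. Once you have the weak-type maximal bound, you can argue directly that for any $f\in\mathcal{S}$,
\[
\limsup_n \snorm{\mathbb{E}(X|\mathcal{A}_n)-X}_E \leq \sup_n \snorm{\mathbb{E}(X-f|\mathcal{A}_n)}_E + \snorm{X-f}_E,
\]
and then $\mu\bigl(\limsup_n \snorm{\mathbb{E}(X|\mathcal{A}_n)-X}_E>\lambda\bigr)$ is bounded by a constant times $\lambda^{-1}\snorm{X-f}_{L^1}$ (or the $L^p$ analogue), which can be made arbitrarily small by density. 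This avoids having to choose a summable sequence of approximants.
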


\begin{lemma}\label{lem:algebren gleich}
    Let \hyperref[Assumption:A]{Assumption A} be satisfied and $\mathcal{G}:=\{ f'(Y) \, | \, f' \in F' \}$. Then we have $ \sigma(Y)=\sigma(\mathcal{G}) $.
\end{lemma}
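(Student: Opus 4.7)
The plan is to prove both inclusions separately, exploiting the paper's definition of $\sigma(Y)$ as the smallest $\sigma$-algebra on $\Omega$ with respect to which $Y$ is weakly measurable, together with the separability of $F$.

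First I would establish $\sigma(\mathcal{G}) \subseteq \sigma(Y)$. This direction is immediate from the definition of weak measurability: each $f'(Y)$ with $f' \in F'$ is $\sigma(Y)$-measurable, hence the $\sigma$-algebra $\sigma(\mathcal{G}) = \sigma(\{f'(Y) : f' \in F'\})$ generated by these functions is contained in $\sigma(Y)$.

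For the converse inclusion $\sigma(Y) \subseteq \sigma(\mathcal{G})$, the point is that by construction every $f'(Y)$ is $\sigma(\mathcal{G})$-measurable, so $Y$ is weakly measurable with respect to $\sigma(\mathcal{G})$. Since $\sigma(Y)$ is by definition the \emph{smallest} $\sigma$-algebra with this property, one obtains $\sigma(Y) \subseteq \sigma(\mathcal{G})$, and combining the two inclusions yields the claim.

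If one instead interpreted $\sigma(Y)$ in the more common way as the preimage $\sigma$-algebra $Y^{-1}(\mathcal{B}(F))$ (which is equivalent to the paper's definition in the present setting, since $F$ is separable and so Pettis-measurability applies), the main additional step would be to identify the Borel $\sigma$-algebra of $F$ with $\sigma(F')$. This I would handle by a standard Hahn--Banach argument: pick a countable dense subset $D \subseteq B_{F'}$, write $\|x\|_F = \sup_{f' \in D} |f'(x)|$ to conclude that the norm is $\sigma(F')$-measurable, deduce that closed balls belong to $\sigma(F')$, and exploit the separability of $F$ to recover every Borel set as a countable union of closed balls. This Borel identification is the only mildly non-trivial ingredient; within the definition already set up in the paper, however, the statement collapses to essentially a definitional verification of the minimality property defining $\sigma(Y)$.
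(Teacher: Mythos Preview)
Your proof is correct and, given the paper's definition of $\sigma(Y)$ as the smallest $\sigma$-algebra for which $Y$ is weakly measurable, it is genuinely more direct than the paper's own argument. You simply unpack the minimality defining $\sigma(Y)$: the family $\mathcal{G}$ is $\sigma(Y)$-measurable by definition of weak measurability, and conversely $\sigma(\mathcal{G})$ already makes $Y$ weakly measurable, so minimality forces equality.

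The paper instead works through the preimage formalism: it first identifies $\sigma(Y)$ with $Y^{-1}(\sigma(F'))$, where $\sigma(F')$ is the cylindrical $\sigma$-algebra on $F$, and then pushes the generating system through $Y^{-1}$ using the standard fact (cited from Klenke) that preimages commute with $\sigma$-algebra generation, arriving at $\sigma(\mathcal{G})$ after a chain of equalities. This route is more explicit but also heavier, and it implicitly re-derives the very minimality property you invoke directly. Your observation in the last paragraph---that under the alternative convention $\sigma(Y)=Y^{-1}(\mathcal{B}(F))$ one would need $\mathcal{B}(F)=\sigma(F')$ via separability and Hahn--Banach---is exactly the extra ingredient the paper's approach would need if it went all the way to the Borel $\sigma$-algebra, but the paper stops at $Y^{-1}(\sigma(F'))$, which matches its weak-measurability definition without that step.
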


\begin{proof}
For $f' \in F'$, we have 
\begin{align*}
    Y^{-1} (\sigma(f')) = Y^{-1} (f'^{-1}(\mathcal{B}))= (f' \circ Y)^{-1} (\mathcal{B}) = \sigma (f' (Y)) \, , 
\end{align*}
where $\mathcal{B}$ is the Borel-$\sigma$-algebra of $\mathbb{R}$. 
This leads to
\begin{align*}
\s(Y)
=
Y^{-1}(\sigma(F'))
= Y^{-1} \left(\s \left(\bigcup_{f'\in F'} \s(f') \right) \right)
&= \s\left(Y^{-1} \left( \bigcup_{f'\in F'} \s(f') \right) \right) \\
&= \s \left(\bigcup_{f'\in F'}  Y^{-1} ( \s(f')) \right) \\
&=  \s\left(\bigcup_{f'\in F'}  \s(f' ( Y) ) \right) \\
&= \s\left(\bigcup_{g\in \mathcal{G} }\s(g) \right)\\
&= \s(\mathcal G)\, ,
\end{align*} 
where in the third step we used \cite[Theorem 1.81]{Klenke}. 
\end{proof}

\begin{lemma}\label{lem:Hahn-Banach}
    Let $E$ be a Banach space and $\mathcal{D} \subset B_{E'}$ a weak-$*$-dense subset. Then for all $e_1,e_2 \in E$ with $e'(e_1)=e'(e_2)$ for all $e' \in \mathcal{D}$, we have $e_1=e_2$.
\end{lemma}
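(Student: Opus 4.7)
The plan is a short contradiction argument via the Hahn-Banach theorem combined with the definition of the weak-$*$ topology. First I would set $e := e_1 - e_2$ and suppose, for contradiction, that $e \neq 0$. Then by the usual Hahn-Banach norming-functional corollary there exists an $e^* \in E'$ with $\snorm{e^*}_{E'} = 1$ and $e^*(e) = \snorm{e}_E > 0$; in particular $e^* \in B_{E'}$.

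Next I would use that $\mathcal{D}$ is weak-$*$-dense in $B_{E'}$ to produce a net $(e'_\alpha)_\alpha \subset \mathcal{D}$ with $e'_\alpha \to e^*$ in the weak-$*$ topology on $E'$. Unfolding the definition of weak-$*$ convergence, this means $e'_\alpha(x) \to e^*(x)$ for every $x \in E$. Evaluating at $x = e$ yields $e'_\alpha(e) \to e^*(e) = \snorm{e}_E \neq 0$.

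To close the argument, I would invoke the hypothesis: for each $\alpha$, $e'_\alpha(e_1) = e'_\alpha(e_2)$, so by linearity $e'_\alpha(e) = 0$. This contradicts $e'_\alpha(e) \to \snorm{e}_E > 0$, forcing $e = 0$ and hence $e_1 = e_2$.

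There is no real obstacle in this proof; it is a textbook application of Hahn-Banach. The only point worth double-checking is the convention being used for weak-$*$-density (namely, density of $\mathcal{D}$ inside $B_{E'}$ equipped with the relative weak-$*$ topology inherited from $E'$), which is what allows the extraction of the net $(e'_\alpha)$ above. If one preferred to avoid nets, the same argument goes through with a sequence whenever $E$ is separable, since then $B_{E'}$ is weak-$*$ metrizable; but the net version is cleaner and works in full generality.
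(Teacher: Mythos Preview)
Your proof is correct and uses essentially the same approach as the paper: both combine weak-$*$ density (to pass from $\mathcal{D}$ to an arbitrary functional in $B_{E'}$ via a net) with the Hahn--Banach separation theorem. The only cosmetic difference is that the paper argues directly---first extending $e'(e_1)=e'(e_2)$ to all $e'\in B_{E'}$ by nets, then invoking Hahn--Banach---whereas you run the same two steps as a contradiction argument.
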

\begin{proof}
Let $e' \in B_{E'}$ then there exists a net $e_\alpha' \subseteq \mathcal{D}$ with $ \langle e_\alpha', e_j \rangle_{E',E} \to \langle e', e_j \rangle_{E',E} $ for $j=1,2$. This shows by our assumption that $e'(e_1)=e'(e_2)$. Now a simple application of Hahn-Banach Theorem gives the assertion. 
\end{proof}
The next Lemma can be found in \cite[Theorem 2.6.18 and 2.6.23]{BanachSpace}.
\begin{lemma}\label{lem:countable pointwise dense subset}
Let $E$ be a separable Banach space. Then $B_{E'}$ endowed with the weak-$*$-topology is a compact metrizable space and there exists a countable, weak-$*$-dense subset $\mathcal{D} \subseteq B_{E'}$.
\end{lemma}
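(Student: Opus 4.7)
The plan is to invoke the Banach--Alaoglu theorem together with the standard metrizability argument for the weak-$*$ topology on the dual unit ball of a separable Banach space, and then to extract a countable dense subset from compactness plus metrizability.

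First, I would recall that the Banach--Alaoglu theorem gives that $B_{E'}$ is compact in the weak-$*$-topology without any separability hypothesis on $E$. Next, using the separability of $E$, I would fix a countable dense subset $\{e_n \, : \, n \in \mathbb{N}\} \subseteq B_E$ and define
\begin{align*}
d(e_1', e_2') := \sum_{n=1}^\infty 2^{-n} \frac{|\langle e_1' - e_2', e_n \rangle_{E',E}|}{1 + |\langle e_1' - e_2', e_n \rangle_{E',E}|}
\end{align*}
for $e_1', e_2' \in B_{E'}$. The verification that $d$ is a metric is routine, so I would only check the essential point, namely that $d$ induces the weak-$*$ topology on $B_{E'}$. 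One direction is immediate: convergence in the weak-$*$-topology entails convergence of every coordinate $\langle e_\alpha', e_n\rangle$, and dominated convergence of the summands of $d$ then yields $d$-convergence. For the converse, if $d(e_\alpha', e') \to 0$ then $\langle e_\alpha', e_n\rangle \to \langle e', e_n\rangle$ for every $n$, and because $\{e_n\}$ is dense in $B_E$ and the net is uniformly norm-bounded by $1$, an $\varepsilon/3$-argument extends this to every $e \in E$.

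Having established that $B_{E'}$ is a compact metric space, I would invoke the standard fact that every compact metric space is separable: cover $B_{E'}$ by finitely many balls of radius $1/k$ for each $k \in \mathbb{N}$, pick one center from each ball, and take the countable union of these centers. This yields the desired countable weak-$*$-dense subset $\mathcal{D} \subseteq B_{E'}$.

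The only place requiring any care is the equivalence of topologies, specifically the implication from $d$-convergence to weak-$*$-convergence; here one must use both the density of $\{e_n\}$ in $B_E$ and the uniform bound $\|e_\alpha'\| \le 1$ to exchange limits. Everything else is either Banach--Alaoglu or a standard covering argument, both of which I would only sketch and then cite \cite[Theorem 2.6.18 and 2.6.23]{BanachSpace} for the complete details.
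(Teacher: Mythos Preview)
Your proposal is correct and gives the standard argument; the paper itself does not prove this lemma but simply cites \cite[Theorem 2.6.18 and 2.6.23]{BanachSpace}, the same reference you invoke at the end. Your sketch is precisely the content behind that citation, so there is no discrepancy.
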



\color{black}

\begin{lemma}\label{lem:Hahn-Banach für Zufallsvariablen}
Let $(\Omega,\mathcal{A},\mu)$ be a probability space, $E$ be a separable Banach space, and $X,Y: \Omega \to E$ random variables.
Moreover, assume that for all $e' \in E'$ there exists a null-set $N_{e'} \in \mathcal{A}$ such that 
\begin{align*}
e'(X(\omega))= e'(Y(\omega)), \qquad \, \omega \in \Omega \setminus N_{e'}\, .
\end{align*}
Then $X=Y$ almost everywhere.
\end{lemma}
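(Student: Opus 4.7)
The obvious attempt, namely to set $N:=\bigcup_{e'\in E'} N_{e'}$, fails because $E'$ may be uncountable, so this union need not be a null-set. The plan is therefore to reduce the separating family of functionals to a countable one, using the separability of $E$ together with the two preceding lemmata.

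First, I would invoke Lemma \ref{lem:countable pointwise dense subset}: since $E$ is separable, there exists a countable weak-$*$-dense subset $\mathcal{D}\subseteq B_{E'}$. Then I would set
\begin{align*}
N := \bigcup_{e'\in \mathcal{D}} N_{e'}\, .
\end{align*}
As a countable union of null-sets, $N\in\mathcal{A}$ satisfies $\mu(N)=0$.

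Next, for each fixed $\omega \in \Omega\setminus N$, I would observe that by construction $e'(X(\omega))=e'(Y(\omega))$ holds simultaneously for all $e'\in \mathcal{D}$. Applying Lemma \ref{lem:Hahn-Banach} to the points $e_1:=X(\omega)$ and $e_2:=Y(\omega)$ in $E$ with this weak-$*$-dense subset $\mathcal{D}$ then yields $X(\omega)=Y(\omega)$. Since this holds for every $\omega$ outside the null-set $N$, we conclude $X=Y$ almost everywhere.

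The main (mild) obstacle is the passage from the uncountable to the countable separating family; once separability of $E$ is used to furnish $\mathcal{D}$, everything else is a routine combination of a countable-union-of-null-sets argument with the Hahn--Banach-type separation given in Lemma \ref{lem:Hahn-Banach}. Measurability of $N$ is automatic since each $N_{e'}\in\mathcal{A}$ and $\mathcal{D}$ is countable.
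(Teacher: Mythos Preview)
Your proposal is correct and follows essentially the same approach as the paper: use Lemma~\ref{lem:countable pointwise dense subset} to obtain a countable weak-$*$-dense $\mathcal{D}\subseteq B_{E'}$, take the countable union $N:=\bigcup_{e'\in\mathcal{D}} N_{e'}$, and then apply Lemma~\ref{lem:Hahn-Banach} pointwise for $\omega\in\Omega\setminus N$.
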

\begin{proof}
Let $\mathcal{D}\subset B_{E'}$ be a countable weak-$*$-dense subset, according to Lemma \ref{lem:countable pointwise dense subset}. We set 
\begin{align*}
N_{\mathcal{D}} := \cup_{e' \in \mathcal{D}} N_{e'}.
\end{align*}
Then $N_\mathcal{D}$ is a union of countable null-sets, thus again a null-set.
Moreover, our construction  shows 
\begin{align*}
e'(X(\omega))=e'(Y(\omega)), \qquad \, \omega \in \Omega \setminus N_\mathcal{D},\, e' \in \mathcal{D}.
\end{align*}
Applying Lemma \ref{lem:Hahn-Banach} leads to $X(\omega) = Y(\omega)$ for all $\omega \in \Omega \setminus N_\mathcal{D}$.
\end{proof}

\begin{lemma}\label{lem:Projektions Darstellungen}
Under \hyperref[Assumption:A]{Assumption A}, let $G$ be a Gaussian Hilbert space, such that $G_X \subseteq G$. Additionally let $\mathcal{G} \subseteq G$ be a closed subspace and $(r_j)_{j \in \mathbb{N}}$ be an ONB of $\mathcal{G}$. Then the series
\begin{align*}
\Pi_\mathcal{G} X:=\sum_{j = 1}^\infty r_j \int_\Omega r_j X \, \textup{d} \mu
\end{align*}
converges almost everywhere as well as in $L^p(\mathcal{A},E)$
for all $p \geq 1$ and its limit $\Pi_\mathcal{G} X$ is jointly Gaussian with $X$. Moreover the limit is independent of the choice of the ONB, that is if $(\tilde{r}_j)_{ j\in \mathbb{N}}$ is another ONB of $\mathcal{G}$, then  
\begin{align*}
\sum_{j =1}^\infty r_j \int_\Omega r_j X \, \textup{d} \mu = \sum_{j = 1}^\infty \tilde{r}_j \int_\Omega \tilde{r}_j X \, \textup{d} \mu
\end{align*}
almost everywhere. Finally, if $\Pi_{\mathcal G}:G\to G$ denotes the orthogonal projection onto $\mathcal G$, then for all $e' \in E'$ we   have  
\begin{align}\label{eq:e' kommutiert mit Pi}
    e'(\Pi_\mathcal{G} X) = \Pi_\mathcal{G} e'(X) \, .
\end{align}
\end{lemma}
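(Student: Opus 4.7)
The plan is to identify the partial sums $S_n := \sum_{j=1}^n r_j \int_\Omega r_j X \, \textup{d} \mu$ with the conditional expectations $\mathbb{E}(X \,|\, \mathcal{F}_n)$, where $\mathcal{F}_n := \sigma(r_1, \dots, r_n)$, and then invoke the martingale convergence statement of Theorem \ref{martingal} to obtain both the almost sure and $L^p$ convergence of $S_n$ to $\mathbb{E}(X \,|\, \sigma(\mathcal{G}))$. Once this identification is in hand, the remaining assertions follow by straightforward limiting arguments.

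First I would fix $n \in \mathbb{N}$ and an arbitrary $e' \in E'$. Since $G_X \subseteq G$ and $r_1,\dots,r_n \in \mathcal{G} \subseteq G$, the family $(e'(X), r_1, \dots, r_n)$ lies in the Gaussian Hilbert space $G$ and is therefore a centered jointly Gaussian vector. The conditional-expectation-as-projection identity \eqref{eq:Conditioning Orthogonal projection} then gives
\begin{align*}
\mathbb{E}\bigl(e'(X) \,\big|\, \mathcal{F}_n\bigr)
= \Pi_{\textup{span}(r_1,\dots,r_n)} e'(X)
= \sum_{j=1}^n \left( \int_\Omega r_j e'(X) \, \textup{d} \mu \right) r_j
= e'(S_n).
\end{align*}
On the other hand, Lemma \ref{Lemma: e' commutes with conditioning} yields $e'(\mathbb{E}(X \,|\, \mathcal{F}_n)) = \mathbb{E}(e'(X) \,|\, \mathcal{F}_n)$ outside a null set $N_{e'}$. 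Combining these two identities and applying Lemma \ref{lem:Hahn-Banach für Zufallsvariablen} gives $S_n = \mathbb{E}(X \,|\, \mathcal{F}_n)$ almost everywhere.

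With this identification, Theorem \ref{martingal} applied to the filtration $(\mathcal{F}_n)$ yields convergence of $S_n$ almost everywhere and in every $L^p(\mathcal{A}, E)$ to $\mathbb{E}(X \,|\, \mathcal{F}_\infty)$. Since each $r_j$ lies in $\mathcal{G}$ and every $g \in \mathcal{G}$ is an $L^2$-limit of finite linear combinations of the $r_j$, a standard argument shows $\mathcal{F}_\infty = \sigma(\mathcal{G})$, so the limit equals $\mathbb{E}(X \,|\, \sigma(\mathcal{G}))$. In particular, the limit depends only on $\sigma(\mathcal{G})$ and is therefore independent of the choice of ONB, which settles that part of the claim. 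For joint Gaussianity, I note that $e'(S_n) \in \mathcal{G}$ converges in $L^2(\mathcal{A})$ to $e'(\Pi_\mathcal{G} X)$ by continuity of $e'$, so the closedness of $G$ in $L^2(\mathcal{A})$ places each $e'(\Pi_\mathcal{G} X)$ in $G$. Hence any finite linear combination of evaluations of $X$ and of $\Pi_\mathcal{G} X$ sits in the Gaussian Hilbert space $G$, which yields joint Gaussianity of $(X, \Pi_\mathcal{G} X)$. The identity \eqref{eq:e' kommutiert mit Pi} is then obtained by passing to the limit in $e'(S_n) = \sum_{j=1}^n \langle r_j, e'(X) \rangle_{L^2(\mathcal{A})} r_j$, whose right-hand side is precisely the partial Fourier expansion of $\Pi_\mathcal{G} e'(X)$ in the ONB $(r_j)$.

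The main obstacle is the identification $S_n = \mathbb{E}(X \,|\, \mathcal{F}_n)$: it crucially rests on the fact that within a Gaussian Hilbert space the conditional expectation coincides with the orthogonal projection, which is exactly where the hypothesis $G_X \subseteq G$ is used. Once this step is secured, the remainder of the proof reduces to a routine application of the martingale convergence theorem together with the closedness of $G$ in $L^2(\mathcal{A})$.
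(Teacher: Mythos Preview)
Your approach is essentially the paper's: identify $S_n$ with $\mathbb{E}(X\mid\mathcal{F}_n)$ via \eqref{eq:Conditioning Orthogonal projection}, Lemma~\ref{Lemma: e' commutes with conditioning}, and Lemma~\ref{lem:Hahn-Banach für Zufallsvariablen}, then apply Theorem~\ref{martingal}. The joint Gaussianity and the identity \eqref{eq:e' kommutiert mit Pi} are handled the same way.

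The one place where you diverge from the paper, and where there is a genuine imprecision, is the ONB-independence step. Your claim that ``a standard argument shows $\mathcal{F}_\infty=\sigma(\mathcal{G})$'' is not correct as stated: $\mathcal{G}$ consists of $L^2$-equivalence classes, so $\sigma(\mathcal{G})$ is not even well defined, and for any choice of representatives the inclusion $\sigma(\mathcal{G})\subseteq\sigma(r_1,r_2,\dots)$ fails in general, since an $L^2$-limit of $\mathcal{F}_\infty$-measurable functions need not be $\mathcal{F}_\infty$-measurable. The paper in fact devotes Lemmas~\ref{lem:BedingteErwartungErzeugendeSystem}--\ref{lem:G Hut} and Theorem~\ref{thm:BedingteErwartungenHüte} to this very issue, so it is not a throwaway step. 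The paper's proof avoids this altogether: it first establishes \eqref{eq:e' kommutiert mit Pi}, and then deduces ONB-independence by observing that for a second ONB $(\tilde r_j)$ one gets $e'(\tilde\Pi_{\mathcal{G}}X)=\Pi_{\mathcal{G}}e'(X)=e'(\Pi_{\mathcal{G}}X)$ for every $e'\in E'$, whence $\tilde\Pi_{\mathcal{G}}X=\Pi_{\mathcal{G}}X$ a.e.\ by Lemma~\ref{lem:Hahn-Banach für Zufallsvariablen}. Reordering your argument in this way closes the gap with no extra work.
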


\begin{proof}
In order to apply Theorem \ref{martingal}, we define the filtration given by the $\sigma$-algebras
\begin{align*}
\mathcal{A}_n:= \sigma \left( r_1, \dots, r_n  \right).
\end{align*} 
For $e' \in E'$, Lemma \ref{Lemma: e' commutes with conditioning} and \eqref{eq:Conditioning Orthogonal projection} applied to $V:=\textup{span} \{ r_1, \dots ,r_n \}$ give
\begin{align*}
e'\left( \mathbb{E} (X|\mathcal{A}_n) (\omega)\right)=\mathbb{E}(e'(X)|\mathcal{A}_n)(\omega) = \sum_{j=1}^n r_j(\omega) \int_\Omega r_j e'(X) \, \textup{d} \mu = e' \left( \sum_{j=1}^n r_j(\omega) \int_\Omega r_j X \, \textup{d} \mu \right)
\end{align*}
for all $\omega \in \Omega \setminus N_{e'}$. Consequently,  Lemma \ref{lem:Hahn-Banach für Zufallsvariablen} shows
\begin{align*}
\mathbb{E}(X| \mathcal{A}_n)  =  \sum_{j=1}^n r_j \int_\Omega r_j X \, \textup{d} \mu.
\end{align*} 
Moreover, using Theorem \ref{martingal}, we obtain the convergence
\begin{align*}
    \mathbb{E}(X| \mathcal{A}_n) \to \mathbb{E}(X | \mathcal{A}_\infty),
\end{align*}
which occurs almost everywhere and in $L^p(\mathcal{A},E)$. This implies that the series
\begin{align*}
   \sum_{j=1}^\infty r_j \int_\Omega r_j X \textup{d} \mu =  \mathbb{E}(X | \mathcal{A}_\infty)
\end{align*}
is convergent in the same sense. 
In order to show that the resulting random variable is Gaussian we consider the sequence
\begin{align*}
    Z_n:= \mathbb{E}(X| \mathcal{A}_n)  =\sum_{j=1}^n r_j \int_\Omega r_j X \, \textup{d} \mu .
\end{align*} 
Each $Z_n$ is a $E$-valued Gaussian random variable and, as noted above, the sequence $(Z_n)$ converges to the random variable $\Pi_{\mathcal{G}} X= \mathbb{E}(X|\mathcal{A}_\infty)$ pointwise on the set $\Om\setminus N$, where $N$ is a suitable null-set.
Consequently, we have $e'(Z_n(\om)) \to e'(\Pi_{\mathcal{G}}(\om))$ for all $e'\in E'$ and $\om \in \Om\setminus N$. 

We note for $e_1',e_2' \in E'$ we have $e'_1(\Pi_{\mathcal{G}} X) \in \mathcal{G} \subseteq G$ and $e_2'(X) \in G_X \subseteq G$, and thus $e_1'(\Pi_{\mathcal{G}}X)+e_2'(X)$ is as an element of $G$ Gaussian. We conclude $\Pi_{\mathcal{G}}X$ is jointly Gaussian with $X$.

Moreover, since each $e'(Z_n)$ is an $\R$-valued Gaussian random variable, so is the limit $e'(Z_\infty)$, 
see e.g.~\cite[Theorem C.4]{IngoConditioning}.


To verify the last assertion we first note that
applying   \eqref{eq:Conditioning Orthogonal projection}   to $V := \mathcal{G}$ and $g:= e'(X)$ yields
\begin{align}\label{eq:Equality for e' and Pi weak}
 \sum_{j =1}^\infty r_j \int_\Omega r_j e'( X ) \, \textup{d} \mu = \Pi_{\mathcal{G}} e'(X)\, ,
\end{align}
where both sides are $L^2({\mathcal A})$ $\mu$-equivalences classes.
Morevover, the series 
\begin{align*}
     \sum_{j = 1}^\infty r_j \int_\Omega r_j X \, \textup{d} \mu  
\end{align*}
converges pointwise up to some null set $N$. Consequently, we 
find
\begin{align}\label{eq:help}
e'(\Pi_\mathcal{G} X(\omega))
=
e'\left( \sum_{j = 1}^\infty r_j(\omega) \int_\Omega r_j X \, \textup{d} \mu  \right) 
=
\sum_{j =1}^\infty r_j(\omega) \int_\Omega r_j e'( X ) \, \textup{d} \mu 
\end{align}
for all $\om \in \Om \setminus N$.
Combining   this with \eqref{eq:Equality for e' and Pi weak} yields \eqref{eq:e' kommutiert mit Pi}.

Finally, let $(\tilde{r}_j)_{ j\in \mathbb{N}}$ be  another ONB of $\mathcal{G}$. The already proven parts then show that 
\begin{align*}
\tilde \Pi_\mathcal{G} X:=\sum_{j = 1}^\infty \tilde r_j \int_\Omega \tilde r_j X \, \textup{d} \mu
\end{align*}
converges almost everywhere and for all $e'\in E'$ we have 
$e'(\tilde \Pi_\mathcal{G} X) = \Pi_\mathcal{G} e'(X) = e'(\Pi_\mathcal{G} X)$ as $L^2({\mathcal A})$ $\mu$-equivalences classes.
Now Lemma \ref{lem:Projektions Darstellungen} leads to the desired result.
\end{proof}


\begin{lemma}\label{lem:Orthogonal projections are similiar}
Let $H_1,H_2$ be Hilbert spaces, $\Pi_1: H_1 \to H_1$ be an orthogonal projection, and $U:H_1 \to H_2$ be an isometric ismorphism. Then 
$\Pi_2:=U \Pi_1 U^*$ is an orthogonal projection in $H_2$ with $U(\textup{ran}( \Pi_1))=\textup{ran}(\Pi_2) $.
\end{lemma}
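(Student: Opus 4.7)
The plan is to verify the two defining algebraic properties of an orthogonal projection, namely idempotence and self-adjointness, and then identify the range. Since $U$ is an isometric isomorphism, we have $U^*U=\id_{H_1}$ and $UU^*=\id_{H_2}$, so both identities will fall out from cancelling $U^*U$ in the middle.

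First I would check idempotence. Computing directly,
\begin{align*}
\Pi_2^2 = U\Pi_1 U^* U \Pi_1 U^* = U \Pi_1^2 U^* = U \Pi_1 U^* = \Pi_2,
\end{align*}
where I used $U^*U=\id_{H_1}$ and $\Pi_1^2=\Pi_1$. For self-adjointness, taking adjoints and using $\Pi_1^*=\Pi_1$ gives
\begin{align*}
\Pi_2^* = (U\Pi_1 U^*)^* = U \Pi_1^* U^* = U \Pi_1 U^* = \Pi_2.
\end{align*}
Together these show $\Pi_2$ is an orthogonal projection on $H_2$.

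For the range identity, I would argue by double inclusion. If $h_1 \in \ran(\Pi_1)$, then $\Pi_1 h_1 = h_1$ and, using $U^*U=\id_{H_1}$,
\begin{align*}
Uh_1 = U\Pi_1 h_1 = U\Pi_1 U^* U h_1 = \Pi_2 (Uh_1) \in \ran(\Pi_2),
\end{align*}
which gives $U(\ran(\Pi_1))\subseteq \ran(\Pi_2)$. Conversely, for $h_2 \in \ran(\Pi_2)$ there exists $h \in H_2$ with $h_2 = \Pi_2 h = U \Pi_1 U^* h = U(\Pi_1 U^* h)$, and $\Pi_1 U^*h \in \ran(\Pi_1)$, so $h_2 \in U(\ran(\Pi_1))$.

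There is no real obstacle here; the statement is a direct structural fact, and the only thing to be careful about is distinguishing $U^*U$ from $UU^*$ and applying them on the correct side when performing the cancellations.
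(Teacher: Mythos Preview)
Your proof is correct and follows essentially the same approach as the paper: verify idempotence and self-adjointness via the identities $U^*U=\id_{H_1}$, $\Pi_1^2=\Pi_1$, and $\Pi_1^*=\Pi_1$. The only difference is that the paper dismisses the range identity as ``trivial'' without writing out the double inclusion you provide.
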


\begin{proof} 
Using \cite[Proposition II.3.3]{Idempotent}, we note that we only have to prove that $\Pi_2$ is idempotent and self adjoint.
We first note that $\Pi_2$ is idempotent since
\begin{align*}
\Pi_2^2   = U \Pi_1 U^* U \Pi_1 U^*   = U \Pi_1^2 U^*   = U \Pi_1 U^*  = \Pi_2\, .
\end{align*} 
Moreover, we also have 
\begin{align*}
\Pi_2^*  = (U \Pi_1 U^*)^*  = ((U^*)^* \Pi_1^* U^*)  = U \Pi_1 U^*  = \Pi_2\, ,
\end{align*}
i.e.~$\Pi_2$ is self-adjoint. The last assertion is trivial. 
\end{proof}

\begin{lemma}\label{lem:orthogonal bochner integral}
Let \hyperref[Assumption:A]{Assumption A} be satisfied. Then for all $g \in L^2(\mathcal{A})$ we have
\begin{align*}
    \int_\Omega g X \, \textup{d}\mu = \int_\Omega \Pi_{G_X} g X \, \textup{d} \mu\, ,
\end{align*}
where $\Pi_{G_X}:L^2(\mathcal{A}) \to L^2(\mathcal{A})$ is the orthogonal projection onto $G_X$.
\end{lemma}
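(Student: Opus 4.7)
The plan is to decompose $g \in L^2(\mathcal{A})$ orthogonally with respect to $G_X$ and to verify that the component lying in $G_X^\perp$ contributes nothing to the Bochner integral against $X$.

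More concretely, I would first write $g = g_X + g_X^\perp$ with $g_X := \Pi_{G_X} g \in G_X$ and $g_X^\perp \in G_X^\perp$, where the orthogonal complement is taken inside $L^2(\mathcal{A})$. By linearity of the Bochner integral it then suffices to show
\begin{align*}
\int_\Omega g_X^\perp X \, \textup{d}\mu = 0.
\end{align*}

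For this, I would fix an arbitrary $e' \in E'$ and use the standard commutation of continuous linear functionals with the Bochner integral to obtain
\begin{align*}
e'\left( \int_\Omega g_X^\perp X \, \textup{d}\mu \right) = \int_\Omega g_X^\perp \, e'(X) \, \textup{d}\mu = \langle g_X^\perp, e'(X) \rangle_{L^2(\mathcal{A})}.
\end{align*}
Since $e'(X) \in G_X$ by the very definition of $G_X$ and $g_X^\perp \in G_X^\perp$, this inner product vanishes. Hence $e'\bigl(\int_\Omega g_X^\perp X \, \textup{d}\mu\bigr) = 0$ for every $e' \in E'$, and the Hahn--Banach theorem yields $\int_\Omega g_X^\perp X \, \textup{d}\mu = 0$.

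There is no real obstacle here; this is essentially the same argument that was already carried out in the proof of Lemma \ref{lem:Unitar}\emph{iii)} when computing $\ker(\hat V_X) = G_X^\perp$. The only thing to be careful about is that the Bochner integral $\int_\Omega g X \, \textup{d}\mu$ is well-defined for $g \in L^2(\mathcal{A})$, which follows from $X \in L^p(\mathcal{A},E)$ for all $p \geq 1$ (Fernique) together with the Cauchy--Schwarz inequality, so that $gX \in L^1(\mathcal{A},E)$.
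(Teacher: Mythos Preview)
Your proposal is correct and follows essentially the same approach as the paper: decompose $g$ orthogonally with respect to $G_X$, apply an arbitrary $e' \in E'$ to the Bochner integral, use that $e'(X) \in G_X$ is orthogonal to $g_X^\perp$, and conclude via Hahn--Banach. Your additional remark on the well-definedness of $\int_\Omega g X\,\textup{d}\mu$ via Fernique and Cauchy--Schwarz is a welcome detail that the paper leaves implicit.
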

\begin{proof}
    Since $G_X$ is a closed subspace of $L^2(\mathcal{A})$, we decompose $g$ into $g= \Pi_{G_X} g + \Pi_{G_X^\perp}g$, with $\Pi_{G_X^\perp}$ being the orthogonal projection onto the orthogonal complement of $G_X$. For simplicity we set $g_X:= \Pi_{G_X}g$ and $g_X^\perp=\Pi_{G_X^\perp} g$. Given $e' \in E'$, we then have
    \begin{align*}
e'\left(        \int_\Omega g X \, \textup{d} \mu \right) =       \int_\Omega g e'(X) \, \textup{d} \mu =         \int_\Omega (g_X+g_X^\perp) e'(X) \, \textup{d} \mu 
&=         \int_\Omega g_X e'(X) \, \textup{d} \mu \\
&=e'\left(         \int_\Omega g_X X \, \textup{d} \mu \right),
    \end{align*}
    where we used $e'(X) \in G_X$ and $g_X^\perp \in G_X^\perp$. Since this holds true for all $e' \in E'$ we find the assertion by the Hahn-Banach theorem.
\end{proof}

\begin{lemma}\label{lem:Subsets}
Under \hyperref[Assumption:A]{Assumption A} it holds $G_Z \subseteq G_Y=G_{(Z,Y)}$ and if $G_Y \subseteq G_X$, we additionally have $G_Y=G_Z$.
\end{lemma}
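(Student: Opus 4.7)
The plan is to address the three claims in order: (a) $G_Z\subseteq G_Y$, (b) $G_Y=G_{(Z,Y)}$, (c) if $G_Y\subseteq G_X$ then $G_Y=G_Z$. The common key input is the identification $e'(Z)=\Pi_{G_Y}(e'(X))$ for every $e'\in E'$, which I obtain by combining Lemma~\ref{Lemma: e' commutes with conditioning} (so that $e'(Z)=e'(\mathbb{E}(X|Y))=\mathbb{E}(e'(X)|\sigma(Y))$ almost surely) with equation~\eqref{eq:Conditioning Orthogonal projection} and Lemma~\ref{lem:algebren gleich} (so that $\mathbb{E}(e'(X)|\sigma(Y))=\Pi_{G_Y}e'(X)$ in $L^2(\mathcal A)$). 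Since $\Pi_{G_Y}e'(X)\in G_Y$, this already shows $e'(Z)\in G_Y$, and because $G_Y$ is closed in $L^2(\mathcal A)$, taking closures gives claim~(a).

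For claim~(b), the inclusion $G_Y\subseteq G_{(Z,Y)}$ is trivial from the definition \eqref{eq:G_{(X,Y)}} (apply it with $e'=0$). For the converse, each generator $e'(Z)+f'(Y)$ of $G_{(Z,Y)}$ is a sum of an element of $G_Z\subseteq G_Y$ and an element of $G_Y$, hence lies in $G_Y$; again using that $G_Y$ is closed, we obtain $G_{(Z,Y)}\subseteq G_Y$.

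For claim~(c) I need to prove the remaining inclusion $G_Y\subseteq G_Z$. Fix $y\in G_Y$. By hypothesis $y\in G_X$, so there is a sequence $(e_n')\subset E'$ with $e_n'(X)\to y$ in $L^2(\mathcal A)$. Applying the bounded operator $\Pi_{G_Y}$ and using that $\Pi_{G_Y}$ is the identity on $G_Y$, we get
\begin{equation*}
  y=\Pi_{G_Y}y=\lim_{n\to\infty}\Pi_{G_Y}(e_n'(X))=\lim_{n\to\infty}e_n'(Z),
\end{equation*}
where the last equality uses the key identification from paragraph one. Since each $e_n'(Z)$ lies in $G_Z$ and $G_Z$ is closed, the limit $y$ lies in $G_Z$, finishing the proof.

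The only slightly subtle point is making sure the identification $e'(Z)=\Pi_{G_Y}e'(X)$ is available as a genuine equality of $L^2$-elements rather than only pointwise a.e., but this is exactly what Lemma~\ref{Lemma: e' commutes with conditioning} together with \eqref{eq:Conditioning Orthogonal projection} delivers, so no real obstacle arises.
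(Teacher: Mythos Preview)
Your proof is correct and follows essentially the same strategy as the paper: both rely on the identification $e'(Z)=\Pi_{G_Y}e'(X)$, and the arguments for $G_Z\subseteq G_Y$ and $G_Y=G_{(Z,Y)}$ are the same (the paper cites an external reference for the first inclusion, whereas you argue it directly).

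For claim~(c) your execution is actually cleaner than the paper's. The paper first uses Lemma~\ref{lem:Conditioning X=Z} to show that $e_n'(Z)$ converges \emph{weakly} to $g$, then separately uses the projection identity to show $(e_n'(Z))$ is Cauchy, and finally combines weak convergence with the Cauchy property to deduce strong convergence. You bypass this detour entirely: once $e_n'(Z)=\Pi_{G_Y}e_n'(X)$ is known, continuity of $\Pi_{G_Y}$ applied to $e_n'(X)\to y$ together with $\Pi_{G_Y}y=y$ yields $e_n'(Z)\to y$ directly. This is shorter and avoids invoking Lemma~\ref{lem:Conditioning X=Z} at all for this part.
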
 
 \begin{proof}
    For $G_Z \subseteq G_Y$, we refer to \cite[Theorem 3.3 vii)]{IngoConditioning}. We conclude $G_Y=G_{(Z,Y)}$ by 
    \begin{align*}
G_Y \subseteq    G_{(Z,Y)} = \overline{G_Z + G_Y} \subseteq \overline{G_Y + G_Y} = G_Y\, ,
\end{align*}
where in the first and second step we used \eqref{eq:G_{(X,Y)}}.

To prove the second assertion, we fix a $g \in G_Y$. Since $G_Y\subseteq G_X$,  there then exists a sequence $(e_n')$ in $E'$ such that $e'_n(X) \to g$ in $L^2(\mathcal{A})$. Now Lemma \ref{lem:Conditioning X=Z} with $\mathcal{C}:=\sigma(Y)$ implies for all $g_Y \in G_Y \subseteq L^2(\mathcal{C})$
\begin{align*}
 \int_\Omega g_Y e_n'(Z) \, \textup{d} \mu =    \int_\Omega g_Y e'_n(X) \, \textup{d} \mu \to \int_\Omega g_Y g \, \textup{d} \mu \, . 
\end{align*} 
We conclude that the sequence $(e_n'(Z))$ converges $L^2({\mathcal C})$-weakly against $g$. Moreover, using  \cite[Theorem 3.3 vii)]{IngoConditioning}, we find
\begin{align*}
\Vert    e_n'(Z) -e_m'(Z) \Vert_{L^2(\mathcal{C})} = \Vert  \Pi_{G_Y} ( e_n'(X) -e_m'(X) )\Vert_{L^2(\mathcal{C})} \leq \Vert    e_n'(X) -e_m'(X) \Vert_{L^2(\mathcal{A})}\, .
\end{align*}
Since $(e_n'(X))$ is a  $L^2({\mathcal A})$-Cauchy sequence, we conclude that $(e_n'(Z))$ is a  $L^2({\mathcal C})$-Cauchy sequence.
Combining the latter with the weak convergence, we conclude that $e_n'(Z) \to g$ in $L^2(\mathcal{C})$. In other words we have found $g \in G_Z$.
\end{proof}

\begin{lemma}
    Under \hyperref[Assumption:A]{Assumption A}, it holds $W_Z \subseteq W_X$ and for all $w \in W_Z$, we have $\Vert w \Vert_{W_X} \leq \Vert w \Vert_{W_Z}$.
\end{lemma}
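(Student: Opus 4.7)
The plan is to trace through the canonical identifications $G_Z \to W_Z$ and $G_X \to W_X$ supplied by Lemma \ref{lem:Unitar}, using Lemma \ref{lem:Conditioning X=Z} to pivot between $Z$ and $X$ inside the Bochner integral. Concretely, given $w \in W_Z$, I would first apply Lemma \ref{lem:Unitar} to $Z$ to pick the unique $g \in G_Z$ with $w = V_Z g = \int_\Omega g Z \, \textup{d}\mu$ and $\|g\|_{G_Z} = \|w\|_{W_Z}$.

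The next step is to notice that $g$ is $\sigma(Y)$-measurable. Indeed, Lemma \ref{lem:Subsets} gives $G_Z \subseteq G_Y$, and since every element of $G_Y$ lies in the $L^2$-closure of $\{f'(Y) : f' \in F'\}$, it is $\sigma(Y)$-measurable. Applying Lemma \ref{lem:Conditioning X=Z} with $\mathcal{C} := \sigma(Y)$ together with $Z = \mathbb{E}(X|Y)$ then yields
\begin{align*}
w = \int_\Omega g Z \, \textup{d}\mu = \int_\Omega g X \, \textup{d}\mu = \hat{V}_X g,
\end{align*}
with $\hat{V}_X: L^2(\mathcal{A}) \to W_X$ from Lemma \ref{lem:Unitar}. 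This immediately shows $w \in W_X$, hence $W_Z \subseteq W_X$.

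For the norm inequality, I would decompose $g = \Pi_{G_X} g + \Pi_{G_X^\perp} g$ in $L^2(\mathcal{A})$. Since $\textup{ker}(\hat{V}_X) = G_X^\perp$ by Lemma \ref{lem:Unitar}, the orthogonal complement part is annihilated, giving $\hat{V}_X g = V_X \Pi_{G_X} g$ and
\begin{align*}
\|w\|_{W_X} = \|V_X \Pi_{G_X} g\|_{W_X} = \|\Pi_{G_X} g\|_{G_X} \leq \|g\|_{L^2(\mathcal{A})} = \|g\|_{G_Z} = \|w\|_{W_Z},
\end{align*}
where the second equality uses that $V_X$ is an isometry and the penultimate equality uses that $G_Z$ inherits its inner product from $L^2(\mathcal{A})$. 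I do not expect any genuine obstacle here; the only subtle point is keeping careful track of the three different spaces in which $g$ is viewed ($G_Z$, $G_Y$, and $L^2(\mathcal{A})$) so that both Lemma \ref{lem:Conditioning X=Z} and the orthogonal decomposition against $G_X$ may be legitimately invoked.
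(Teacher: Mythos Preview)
Your proof is correct and follows essentially the same route as the paper's. Both arguments use Lemma \ref{lem:Conditioning X=Z} with $\mathcal{C}=\sigma(Y)$ to rewrite $\int_\Omega g Z\,\textup{d}\mu$ as $\int_\Omega g X\,\textup{d}\mu=\hat V_X g$, yielding $W_Z\subseteq W_X$; for the norm inequality the paper invokes the infimum definition of $\|\cdot\|_{W_X}$ over all $g\in L^2(\mathcal A)$ with $\hat V_X g=w$, while you make the same point explicit via the orthogonal decomposition $g=\Pi_{G_X}g+\Pi_{G_X^\perp}g$ and $\ker\hat V_X=G_X^\perp$, which is just unpacking that infimum.
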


\begin{proof}
By Lemma \ref{lem:Conditioning X=Z}  with $\mathcal{C}: = \s(Y)$ and Lemma \ref{lem:Unitar} 
we find 
\begin{align*}
     W_{Z}
     =  \left\{ f \in E \, \middle| \, \exists g \in G_Z, f = \int_\Omega g X \, \textup{d} \mu \right\} 
     \subseteq
      \left\{ f \in E \, \middle| \, \exists g \in L^2(\mathcal{A}), f = \int_\Omega gX  \, \textup{d} \mu \right\}
      = W_X\, .
\end{align*}
For the proof of the second assertion we fix a 
$w \in W_Z$.  We then obtain
\begin{align*}
    \Vert w \Vert_{W_Z} = \inf_{g \in G_Z \, w = V_Z g} \Vert g \Vert_{L^2(\mathcal{A})} = \inf_{g \in G_Z \, w = \hat{V}_X g} \Vert g \Vert_{L^2(\mathcal{A})} \geq \inf_{g \in L^2(\mathcal{A}) \, w = \hat{V}_X g} \Vert g \Vert_{L^2(\mathcal{A})} = \Vert w \Vert_{W_X}\, ,
\end{align*}
where in the second step we used Lemma \ref{lem:Conditioning X=Z} and in the last step we used Lemma \ref{lem:Unitar}.
\end{proof}

\begin{proof}[Proof of Theorem \ref{Theorem:Conditioning orthogonal projection}]

Applying Lemma \ref{Lemma: e' commutes with conditioning}, Lemma \ref{lem:algebren gleich}, Identity \eqref{eq:Conditioning Orthogonal projection}, and Lemma \ref{lem:Projektions Darstellungen} leads to 
\begin{align*} 
e'\left(\mathbb{E}(X|Y) \right)=\mathbb{E}(e'(X)| Y) =    \mathbb{E}(e'(X)| G_Y) = \Pi_{G_Y} e'(X)=   e'\left( \Pi_{G_Y} X \right) \, ,
\end{align*} 
where $\Pi_{G_Y}$ denotes the orthogonal projection $\Pi_{G_Y}:G_{(X,Y)} \to G_{(X,Y)}$ onto $G_Y$.
Using Lemma \ref{lem:Hahn-Banach für Zufallsvariablen} and Lemma \ref{lem:Projektions Darstellungen}, we obtain
\begin{align}
    \mathbb{E}(X|Y) = \Pi_{G_Y}X = \sum_{j \in J} r_j \int_\Omega r_j X \, \textup{d} \mu,
\end{align}
where the series converges almost everywhere and in $L^p(\mathcal{A},E)$. 

\emph{i).} Since $\mathbb{E}(\mycdot|Y):G_{(X,Y)} \to G_{(X,Y)}$ is the orthogonal projection onto $G_Y$, see \cite[Theorem 3.3 vii)]{IngoConditioning}, we conclude $\Pi_{G_Y}=\mathbb{E}(\mycdot|Y)$. We have $G_Y=G_{(Z,Y)}$ with Lemma \ref{lem:Subsets}.
We obtain $\Pi_G=\Pi_{G_Y} = \mathbb{E}(\mycdot | Y)$.

\emph{ii).} We consider the map $\Pi_W:W_{(X,Y)} \to W_{(X,Y)}$ that is defined by 
\begin{align*}
    \Pi_W (w_x,w_y) = (L^*_W w_y, w_y),
\end{align*}
and show that it is the orthogonal projection onto $W_{(Z,Y)}$.
To this end, let $ g \in G_{(X,Y)}$ and $g_Y:=\Pi_G g$.
Then we find
\begin{align*}
V_{(X,Y)}^* \Pi_W V_{(X,Y)} g &= V_{(X,Y)}^* \Pi_W \int_\Omega g \cdot (X,Y) \, \textup{d} \mu \\
&= V_{(X,Y)}^* \left(L_W^* \int_\Omega g Y \, \textup{d} \mu, \int_\Omega g Y \, \textup{d} \mu \right) \\
&= V_{(X,Y)}^* \left(L_W^* \int_\Omega g_Y Y \, \textup{d} \mu, \int_\Omega g_Y Y \, \textup{d} \mu \right) \\
&= V_{(X,Y)}^* \left( \int_\Omega g_Y X \, \textup{d} \mu, \int_\Omega g_Y Y \, \textup{d} \mu \right)  \\
&= g_Y \, ,
\end{align*}
where the third identity is a consequence of  Lemma \ref{lem:orthogonal bochner integral} and the fourth identity follows by the definition of $L_W$, see \eqref{eq:Definition of L_W as commutative diagramm}. We use Lemma \ref{lem:Orthogonal projections are similiar} to conclude that $\Pi_W$ is an orthogonal projection onto the space
\begin{align*}
 V_{(X,Y)} \textup{ran}(\Pi_{G}) =V_{(X,Y)} G_{Y}  &= \left\{ \int_\Omega g \cdot (X,Y) \,\textup{d} \mu  \, \middle| \, g \in G_Y \right\} \\
 &=\left\{ \int_\Omega g \cdot (Z,Y) \,\textup{d} \mu  \, \middle| \, g \in G_{Y} \right\} \\
 &=\left\{ \int_\Omega g \cdot (Z,Y) \,\textup{d} \mu  \, \middle| \, g \in G_{(Z,Y)} \right\} \\
 &=W_{(Z,Y)}\, .
\end{align*}
where in the third identity we used Lemma \ref{lem:Conditioning X=Z}, with $\mathcal{C}=\sigma(Y)$ and in the fourth identity we used Lemma \ref{lem:Subsets}.

\emph{iii).} We define the mapping $\Pi_H: H_{(X,Y)} \to H_{(X,Y)}$ by
\begin{align*}
    \Pi_H (h_x,h_y) := (\iota_X L_W^* \iota_Y^{-1} h_y,h_y) 
\end{align*} 
and show that this is the orthogonal projection onto $H_{(Z,Y)}$. 
To this end we first note that for $(h_X,h_Y) \in H_{(X,Y)}$ and $( w_X,  w_Y):=( \iota_X^{-1} h_X, \iota_Y^{-1} h_Y) \in W_{(X,Y)}$, we have  
\begin{align*}
    \iota_{(X,Y)} \Pi_W \iota_{(X,Y)}^{-1} (h_X,h_Y) = \iota_{(X,Y)} \Pi_W (w_X,w_Y) = \iota_{(X,Y)} (L_W^* w_Y,w_Y) = (\iota_X L_W^* \iota_Y^{-1} h_Y,h_Y)\, .
\end{align*}
Moreover recall from \eqref{eq: U_X} that we have
\begin{align*}
    U_{(X,Y)}= V_{(X,Y)}^{-1} \iota_{(X,Y)}^{*}.
\end{align*} 
Combining both identities leads to
\begin{align*}
    U_{(X,Y)} \Pi_H U^*_{(X,Y)} &= V_{(X,Y)}^{*} \iota_{(X,Y)}^{-1} \circ \Pi_H \circ \iota_{(X,Y)} V_{(X,Y)} \\
    &= V_{(X,Y)}^{*} \iota_{(X,Y)}^{-1} \circ     \iota_{(X,Y)} \circ \Pi_W \circ \iota_{(X,Y)}^{-1} \circ \iota_{(X,Y)} V_{(X,Y)} \\
    &= V_{(X,Y)}^* \Pi_W V_{(X,Y)} \, .
\end{align*}
By Lemma \ref{lem:Orthogonal projections are similiar} we conclude that $\Pi_H$ is an orthogonal projection with
\begin{align*} 
\textup{ran}(\Pi_H) = \iota_{(X,Y)} \textup{ran}(\Pi_W) =
\left\{ \iota_{(X,Y)} \int_\Omega g \cdot (Z,Y) \,\textup{d} \mu  \, \middle| \, g \in G_{(Z,Y)} \right\} = H_{(Z,Y)}\, ,
\end{align*}
where in the last equality we used 
\begin{align*}
    \iota_X \left( \int_\Omega g Z \, \textup{d} \mu \right) = \left(e' \mapsto \int_\Omega g e'(Z) \, \textup{d} \mu  \middle)\right|_{B_{E'}} = \iota_Z \left( \int_\Omega g Z\, \textup{d} \mu \right)\, .
\end{align*}
\end{proof}

We define the cross covariance operator $\text{cov}(X,Y):F' \to E$ of $X$ and $Y$ as 
\begin{align*}
 \textup{cov}(X,Y)    f':= \int_{\Omega} f'(Y) X \, \textup{d} \mu.
\end{align*} 

\begin{lemma}\label{lem:Covariance Rechenregel}
Let \hyperref[Assumption:A]{Assumption A} be satisfied, $\mathcal{C} \subseteq \mathcal{A}$ be a sub-$\sigma$-algebra, and  $X_\mathcal{C}:=\mathbb{E}(X|\mathcal{C})$. Then for all $e_1',e_2 ' \in E'$ the following identities hold true 
 
\begin{enumerate}[label={\roman*)}]
\item\label{item:1}$\textup{cov}(X,X_\mathcal{C})= \textup{cov}(X_\mathcal{C},X)=\textup{cov}(X_\mathcal{C}),$
\item\label{item:2}$ \textup{cov}(X-X_\mathcal{C}) =  \textup{cov}(X) -  \textup{cov}(X_\mathcal{C}).$
\end{enumerate}

\end{lemma}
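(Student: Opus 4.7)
The plan is to derive \emph{i)} from Lemma \ref{lem:Conditioning X=Z} applied in two slightly different ways, and then obtain \emph{ii)} by expanding the covariance of $X-X_{\mathcal C}$ via bilinearity. Throughout I fix $e',\tilde e' \in E'$ and recall that by Fernique's theorem all relevant random variables lie in every $L^p(\mathcal{A},E)$.

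For the identity $\textup{cov}(X,X_{\mathcal C}) = \textup{cov}(X_{\mathcal C})$ I would use Lemma \ref{Lemma: e' commutes with conditioning}, which says $e'(X_{\mathcal C}) = \mathbb{E}(e'(X)\,|\,\mathcal{C})$ almost everywhere, so $g:= e'(X_{\mathcal C})$ is $\mathcal{C}$-measurable and square-integrable. Applying Lemma \ref{lem:Conditioning X=Z} to this $g$ then yields
\begin{align*}
\textup{cov}(X,X_{\mathcal C})\, e'
= \int_\Omega e'(X_{\mathcal C})\, X \, \textup{d}\mu
= \int_\Omega e'(X_{\mathcal C})\, X_{\mathcal C} \, \textup{d}\mu
= \textup{cov}(X_{\mathcal C})\, e'\, .
\end{align*}

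For $\textup{cov}(X_{\mathcal C},X) = \textup{cov}(X_{\mathcal C})$ a direct application of Lemma \ref{lem:Conditioning X=Z} is not possible, because the scalar factor in the Bochner integral $\int_\Omega e'(X) X_{\mathcal C}\,\textup{d}\mu$ is $e'(X)$, which is in general not $\mathcal{C}$-measurable. I would therefore pair with an arbitrary $\tilde{e}'\in E'$ to reduce to a scalar identity,
\begin{align*}
\tilde{e}'\bigl(\textup{cov}(X_{\mathcal C},X)\, e'\bigr) = \int_\Omega e'(X)\, \tilde{e}'(X_{\mathcal C}) \, \textup{d}\mu\, ,
\end{align*}
and note that now the factor $\tilde{e}'(X_{\mathcal C}) = \mathbb{E}(\tilde{e}'(X)\,|\,\mathcal{C})$ \emph{is} $\mathcal{C}$-measurable. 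The scalar tower property of conditional expectation then allows me to replace $e'(X)$ inside the integral by $\mathbb{E}(e'(X)\,|\,\mathcal{C}) = e'(X_{\mathcal C})$, giving
\begin{align*}
\tilde{e}'\bigl(\textup{cov}(X_{\mathcal C},X)\, e'\bigr) = \int_\Omega e'(X_{\mathcal C})\, \tilde{e}'(X_{\mathcal C})\,\textup{d}\mu = \tilde{e}'\bigl(\textup{cov}(X_{\mathcal C})\, e'\bigr)\, .
\end{align*}
Since $\tilde{e}'$ was arbitrary, the Hahn--Banach theorem yields $\textup{cov}(X_{\mathcal C},X)\, e' = \textup{cov}(X_{\mathcal C})\, e'$ in $E$.

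For \emph{ii)} I would simply expand
\begin{align*}
\textup{cov}(X-X_{\mathcal C})\, e'
= \mathbb{E}\bigl[(e'(X)-e'(X_{\mathcal C}))(X-X_{\mathcal C})\bigr]
\end{align*}
by linearity of the Bochner integral into the four terms $\textup{cov}(X)\,e'$, $\textup{cov}(X_{\mathcal C},X)\,e'$, $\textup{cov}(X,X_{\mathcal C})\,e'$, and $\textup{cov}(X_{\mathcal C})\,e'$; substituting \emph{i)} collapses three of them into $\textup{cov}(X_{\mathcal C})\,e'$, leaving $\textup{cov}(X)\,e' - \textup{cov}(X_{\mathcal C})\,e'$. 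The only mildly delicate point in this proof is the dualization-plus-Hahn--Banach detour needed in \emph{i)} to handle the non-$\mathcal{C}$-measurability of $e'(X)$; everything else is routine.
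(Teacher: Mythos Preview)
Your argument is correct and follows essentially the same route as the paper: both prove \emph{i)} by pairing with a second functional and invoking Lemma~\ref{lem:Conditioning X=Z} (the paper does this uniformly for both identities, whereas you handle $\textup{cov}(X,X_{\mathcal C})$ directly at the Bochner level and only dualize for $\textup{cov}(X_{\mathcal C},X)$), and both derive \emph{ii)} by bilinear expansion and substitution of \emph{i)}. The Hahn--Banach step you flag is exactly what the paper uses implicitly when concluding from the paired identities.
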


\begin{proof}
We start with $\ref{item:1}$.
Taking $e_1',e_2' \in E'$, we obtain with Lemma \ref{lem:Conditioning X=Z} 
\begin{align*}
\langle \textup{cov}(X_\mathcal{C},X)e'_1,e_2' \rangle_{E,E'} 
= \int_\Omega e_1'(X_\mathcal{C})e'_2(X) \, \textup{d} \mu 
= \int_\Omega e_1'(X_\mathcal{C})e'_2(X_\mathcal{C}) \, \textup{d} \mu 
= \langle \textup{cov}(X_\mathcal{C})e'_1,e_2' \rangle_{E,E'}\, .
\end{align*} 
The identity $\textup{cov}(X, X_\mathcal{C})=\textup{cov}(X_\mathcal{C})$ can be shown analogously. 

The second assertion follows from the bilinearity of the cross covariance operator, namely  
\begin{align*}
 \textup{cov}(X-X_\mathcal{C})
&= 
  \textup{cov}(X-X_\mathcal{C}, X-X_\mathcal{C}) \\
&= 
 \textup{cov}(X,X) - \textup{cov}(X, X_\mathcal{C}) -  \textup{cov}(X_\mathcal{C}, X) +  \textup{cov}(X_\mathcal{C}, X_\mathcal{C}) \\
&= \textup{cov}(X,X) - \textup{cov}(X_\mathcal{C}, X_\mathcal{C})  ,
\end{align*}
where in the last step we used the first assertion.
\end{proof}

In the following, we like to introduce conditional expecations $\mathbb{E}(X|G_Y)$ with respect to a given Gaussian Hilbert space $G_Y$. Here we note that formally $G_Y$ is a subset of $L^2(\mathcal{A})$ that is, a collection of \emph{$\mu$-equivalence classes}. Consequently, a definition like 
\begin{align*}
\mathbb{E}(X|G_Y) := \mathbb{E} \bigl(  X \, \bigr| \, \sigma \left( \left\{  g : g \in G_Y \right\} \right)  \bigr)
\end{align*} 
is \emph{not} well defined. To address this problem we need a couple of auxilliary results. We begin with the following characterization of conditional expectations, where in its proof we need the indicator function $\chi_A: \Omega \to \mathbb{R}$ of $A\subseteq \Omega$, that is, the function defined by
\begin{align*}
\chi_A (\omega) := \begin{cases}
1, &\text{if } \omega \in A \\
0, &\text{else. } 
\end{cases}
\end{align*}

\begin{lemma}\label{lem:BedingteErwartungErzeugendeSystem}
Let $(\Omega,\mathcal{A}, \mu)$ be a probability space, $X \in L^1(\mathcal{A},E)$, and $\mathcal{B} \subset \mathcal{A}$ be a sub-$\sigma$-algebra. Furthermore let $\mathcal{E} \subset \mathcal{A}$ be $\cap$-stable system with $\Omega \in \mathcal{E}$ and $\sigma(\mathcal{E})=\mathcal{B}$. If $Z:\Omega \to E$ is $\mathcal{B}$-measurable, $\mu$-integrable, and satisfies 
\begin{align}\label{eq:Conditionexpectaion for Erzeugende}
\int_B Z \, \textup{d}\mu = \int_B X \, \textup{d} \mu 
\end{align}
for all $B \in \mathcal{E}$, then we have $Z \in \mathbb{E}(X|B)$.  
\end{lemma}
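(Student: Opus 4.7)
The plan is a standard application of Dynkin's $\pi$-$\lambda$ theorem, adapted to Bochner-integrable Banach-space valued functions. I would introduce the candidate Dynkin system
\begin{align*}
\mathcal{D} := \left\{ B \in \mathcal{B} \, : \, \int_B Z \, \textup{d} \mu = \int_B X \, \textup{d} \mu \right\} \, ,
\end{align*}
and then verify that $\mathcal{D}$ satisfies the three Dynkin system axioms. By hypothesis we have $\mathcal{E} \subseteq \mathcal{D}$ and, in particular, $\Omega \in \mathcal{D}$. For closure under complement, if $B \in \mathcal{D}$, then
\begin{align*}
\int_{B^c} Z \, \textup{d} \mu = \int_\Omega Z \, \textup{d} \mu - \int_B Z \, \textup{d} \mu = \int_\Omega X \, \textup{d} \mu - \int_B X \, \textup{d} \mu = \int_{B^c} X \, \textup{d} \mu \, ,
\end{align*}
using that $\Omega \in \mathcal{D}$. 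For closure under countable disjoint unions, if $(B_n)$ is a pairwise disjoint family in $\mathcal{D}$, I would apply Lebesgue's dominated convergence theorem to the partial sums $X \cdot \chi_{\bigcup_{k\leq n} B_k}$ and $Z \cdot \chi_{\bigcup_{k\leq n} B_k}$, which are dominated in norm by the integrable functions $\|X\|_E$ and $\|Z\|_E$, respectively; this yields $\sum_n \int_{B_n} X \, \textup{d} \mu = \int_{\bigcup B_n} X \, \textup{d} \mu$ and the analogous identity for $Z$, whence $\bigcup_n B_n \in \mathcal{D}$.

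Since $\mathcal{E}$ is $\cap$-stable and $\mathcal{E} \subseteq \mathcal{D}$, Dynkin's $\pi$-$\lambda$ theorem yields $\sigma(\mathcal{E}) \subseteq \mathcal{D}$, i.e.\ $\mathcal{B} \subseteq \mathcal{D}$. In other words, $\int_B Z \, \textup{d}\mu = \int_B X \, \textup{d}\mu$ for every $B \in \mathcal{B}$. Combined with the assumed $\mathcal{B}$-measurability and integrability of $Z$, this matches exactly the defining property of a conditional expectation, so $Z \in \mathbb{E}(X|\mathcal{B})$.

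There is no real obstacle here; the only point requiring mild care is that the Bochner integral version of dominated convergence is being used instead of its scalar counterpart, but this is standard (see e.g.\ \cite[Chapter 1]{martingal}), so the Dynkin argument goes through verbatim.
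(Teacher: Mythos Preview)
Your proof is correct and essentially identical to the paper's: both define the same Dynkin class $\mathcal{D}$, verify its axioms (the paper uses the equivalent ``proper difference and increasing unions'' formulation rather than your ``complement and disjoint unions'' version), invoke Bochner dominated convergence for the countable step, and conclude via the $\pi$-$\lambda$ theorem.
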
 
\begin{proof}
We consider the Banach space valued mappings $Q,P : \mathcal{B} \to E$ given by 
\begin{align*}
Q(B) := \int_B X \, \textup{d} \mu \\
P(B) := \int_B Z \, \textup{d} \mu.
\end{align*}
Moreover we define 
\begin{align*}
\mathcal{D} := \{ B \in \mathcal{B} \, | \, Q(B) =P (B) \}.
\end{align*} 
Note that $\Omega \in \mathcal{E}$ by assumption, and \eqref{eq:Conditionexpectaion for Erzeugende} implies $\mathcal{E} \subseteq \mathcal{D}$. 
Let us now show that $\mathcal{D}$ is a Dynkin system in the sense of \citep[Chapter 1.6]{CohnMeasureTheory}. Obviously we have $\Omega \in \mathcal{E}  \subseteq \mathcal{D}$.
Moreover, if $A,B \in \mathcal{D}$ with $A \subseteq B$ then 
\begin{align*}
\int_{B\setminus A} Z \, \textup{d} \mu = \int_B Z \, \textup{d} \mu - \int_A Z \, \textup{d} \mu = \int_B X \, \textup{d} \mu - \int_A X \, \textup{d} \mu = \int_{B \setminus A} X \, \textup{d} \mu 
\end{align*} 
and therefore $B \setminus A \in \mathcal{D}$. 
Finally if we have an increasing sequence $(A_n) \subseteq \mathcal{D}$ then for $A := \cup_{n \in \mathbb{N}} A_n$ we have 
\begin{align*}
\int_A Z \, \textup{d} \mu = \int_\Omega \lim_{n \to \infty} \chi_{A_n} Z \, \textup{d} \mu = \lim_{n \to \infty} \int_{A_n} Z \, \textup{d} \mu = \lim_{n \to \infty} \int_{A_n} X \, \textup{d} \mu = \int_A X \, \textup{d} \mu,
\end{align*}
where we used in the second and last step the dominated convergence Theorem for Bochner integrals \cite[Proposition 1.2.5]{martingal}.
Therefore $\mathcal{D}$ is indeed Dynkin class. By \citep[Theorem 1.6.2]{CohnMeasureTheory} we find that $\sigma(\mathcal{E})= d(\mathcal{E})\subseteq d(\mathcal{D})=\mathcal{D}$, where $d(\mathcal{E})$ and $d(\mathcal{D})$ denote the smallest Dynkin system containing $\mathcal{E}$ and $\mathcal{D}$, respectively. Since $\sigma(\mathcal{E})=\mathcal{B}$ we find the assertion.
\end{proof}

Let us now consider the $\sigma$-algebra $\mathcal{T}$ of $\mu$-trivial events, that is
\begin{align*}
\mathcal{T}:=\{ T \in \mathcal{A} \, | \, \mu(T) \in \{0,1\} \}.
\end{align*}
Given a $\sigma$-algebra $\mathcal{B}\subseteq \mathcal{A} $ we further define 
\begin{align*}
\mathcal{B}_\cap &{:=} \{ B \cap T \, | \, B \in \mathcal{B}, \, T \in \mathcal{T} \}, \\
\mathcal{B}_\cup &{:=} \{ B \cup T \, | \, B \in \mathcal{B}, \, T \in \mathcal{T} \}.
\end{align*}
The next lemma collects some simple but yet important properties of these sets.

\begin{lemma}\label{lem:sigma B schnitt = hut B}
For all $\s$-algebras $\mathcal{B} \subseteq \mathcal{A}$ the following statements hold true:
\begin{enumerate}[label={\roman*)}]
\item\label{item:1b} The set $\mathcal{B}_\cap$ is $\cap$-stable and $\Omega \in \mathcal{B}_\cap$. 
\item\label{item:2b} We have $\sigma(\mathcal{B}_\cap) = \sigma(\mathcal{B}_\cup)=:\hat{\mathcal{B}}$.
\item\label{item:3b} It holds that $\mathcal{B} \subseteq \hat{\mathcal{B}}$.
\item\label{item:4b} We have $\hat{\mathcal{B}} = \hat{\hat{\mathcal{B}}}$.
\item\label{item:5b} Given a $\s$-algebra $\mathcal{C} \subseteq \hat{\mathcal{B}}$ we have $\hat{\mathcal{C}} \subseteq \hat{\mathcal{B}}$.
\end{enumerate} 
\end{lemma}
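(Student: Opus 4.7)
My plan is to treat the five items in order, using two preliminary observations I would record at the start: (a) $\mathcal{T}$ is itself a sub-$\sigma$-algebra of $\mathcal{A}$, because it is closed under complements (since $\mu(T^c) = 1 - \mu(T)$) and under countable unions (if all $T_n$ have measure zero then $\mu(\bigcup T_n) \leq \sum \mu(T_n) = 0$, while if some $T_n$ has measure one then $\mu(\bigcup T_n) = 1$); in particular $\mathcal{T}$ is $\cap$-stable. (b) Every $T \in \mathcal{T}$ lies in $\mathcal{B}_\cap$, because $T = \Omega \cap T$ and $\Omega \in \mathcal{B}$.

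For item \ref{item:1b}, I would expand $(B_1 \cap T_1) \cap (B_2 \cap T_2) = (B_1 \cap B_2) \cap (T_1 \cap T_2)$ and invoke $\cap$-stability of $\mathcal{B}$ and of $\mathcal{T}$; the membership $\Omega \in \mathcal{B}_\cap$ follows from $\Omega = \Omega \cap \Omega$. For item \ref{item:2b}, the de Morgan identities $B \cap T = (B^c \cup T^c)^c$ and $B \cup T = (B^c \cap T^c)^c$, together with the fact that both $\mathcal{B}$ and $\mathcal{T}$ are closed under complements, show directly that $\mathcal{B}_\cap \subseteq \sigma(\mathcal{B}_\cup)$ and $\mathcal{B}_\cup \subseteq \sigma(\mathcal{B}_\cap)$; the equality of generated $\sigma$-algebras follows. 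Item \ref{item:3b} is then a one-line consequence of $B = B \cap \Omega \in \mathcal{B}_\cap \subseteq \hat{\mathcal{B}}$.

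For items \ref{item:4b} and \ref{item:5b}, the central consequence of preliminary observation (b) is that $\mathcal{T} \subseteq \mathcal{B}_\cap \subseteq \hat{\mathcal{B}}$. Hence, for any $\sigma$-algebra $\mathcal{C}$ with $\mathcal{C} \subseteq \hat{\mathcal{B}}$, every generator $C \cap T \in \mathcal{C}_\cap$ already belongs to $\hat{\mathcal{B}}$ (since both $C$ and $T$ do, and $\hat{\mathcal{B}}$ is a $\sigma$-algebra). Thus $\mathcal{C}_\cap \subseteq \hat{\mathcal{B}}$, and passing to the generated $\sigma$-algebra yields $\hat{\mathcal{C}} \subseteq \hat{\mathcal{B}}$; this is \ref{item:5b}. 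Specialising $\mathcal{C} = \hat{\mathcal{B}}$ (which is permitted by \ref{item:3b}) gives $\hat{\hat{\mathcal{B}}} \subseteq \hat{\mathcal{B}}$, while the reverse inclusion follows again from \ref{item:3b}, giving \ref{item:4b}.

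I do not anticipate any genuine obstacle; the only point that needs care is the preliminary verification that $\mathcal{T}$ is itself a $\sigma$-algebra, so that it genuinely behaves well under the countable set-operations appearing implicitly when one passes to $\sigma(\mathcal{B}_\cap)$.
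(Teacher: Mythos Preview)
Your proposal is correct and follows essentially the same route as the paper's proof: items \ref{item:1b}--\ref{item:3b} are handled identically (commutativity/associativity of $\cap$, the de~Morgan identities, and $B = B\cap\Omega$), and for \ref{item:4b}--\ref{item:5b} both arguments rest on the same key observation that $\mathcal{T}\subseteq\hat{\mathcal{B}}$, so that $\hat{\mathcal{B}}$ already absorbs intersections with trivial sets. The only organisational difference is that the paper proves \ref{item:4b} first and then deduces \ref{item:5b} via $\hat{\mathcal{C}}\subseteq\hat{\hat{\mathcal{B}}}=\hat{\mathcal{B}}$, whereas you prove \ref{item:5b} directly and then specialise to $\mathcal{C}=\hat{\mathcal{B}}$ for \ref{item:4b}; your explicit preliminary verification that $\mathcal{T}$ is a $\sigma$-algebra is a small gain in rigour over the paper, which leaves this implicit. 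One tiny remark: the parenthetical ``permitted by \ref{item:3b}'' in your derivation of \ref{item:4b} is unnecessary, since $\hat{\mathcal{B}}\subseteq\hat{\mathcal{B}}$ is trivial; it is only the reverse inclusion $\hat{\mathcal{B}}\subseteq\hat{\hat{\mathcal{B}}}$ that genuinely uses \ref{item:3b}.
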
 
\begin{proof}
$\ref{item:1b}$. The first statement follows by the fact that intersections are commutative and associative, and the second statement is obvious.

$\ref{item:2b}$. We first show that $\mathcal{B}_\cap \subseteq \sigma (B_\cup)$. To this end let $B \in \mathcal{B}$ and $T \in \mathcal{T}$. Then we have 
\begin{align*}
B \cap T = \Omega \setminus (\Omega \setminus (B \cap T)) = \Omega \setminus \left( ( \Omega \setminus B) \cup (\Omega \setminus T ) \right) \in \sigma(\mathcal{B}_\cup).
\end{align*}
To prove $\mathcal{B}_\cup\subseteq \sigma(\mathcal{B}_\cap)$ we fix again some $B \in \mathcal{B}$ and $T \in \mathcal{T}$. Then we have
\begin{align*}
B \cup T = \Omega \setminus ( \Omega \setminus (B \cup T)) = \Omega \setminus \left( ( \Omega \setminus B ) \cap ( \Omega \setminus T ) \right)) \in \sigma(\mathcal{B}_\cap). 
\end{align*}
In summary we thus find $\sigma(\mathcal{B}_\cap) \subseteq \sigma(\sigma(\mathcal{B}_\cup))= \sigma(\mathcal{B}_\cup) \subseteq \sigma(\sigma(\mathcal{B}_\cap))=\sigma(\mathcal{B}_\cap)$.

$\ref{item:3b}$. This follows from $\mathcal{B} \subseteq \mathcal{B}_\cap$. 

$\ref{item:4b}$. 
For the first statement we note that $\hat{\mathcal{B}} \subseteq \hat{\hat{\mathcal{B}}}$ is obvious. For the converse inclusion we note that $\hat{\mathcal{B}}_\cap \subseteq \hat{\mathcal{B}}$ holds true. 
Since $\hat{\mathcal{B}}$ is a $\s$-algebra this statement follows.

\ref{item:5b}.
This follows by $\hat{\mathcal{C}} \subseteq \hat{\hat{\mathcal{B}}}=\hat{\mathcal{B}}$.
\end{proof}

\begin{lemma}
Given $\sigma$-algebras $\mathcal{B} \subseteq \mathcal{C} \subseteq \hat{\mathcal{B}} \subseteq \mathcal{A}$, we have $\hat{\mathcal{C}}=\hat{\mathcal{B}}$.
\end{lemma}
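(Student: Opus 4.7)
The plan is to establish the two inclusions $\hat{\mathcal{C}} \subseteq \hat{\mathcal{B}}$ and $\hat{\mathcal{B}} \subseteq \hat{\mathcal{C}}$ separately, and both should follow essentially for free from the previous lemma.

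For the inclusion $\hat{\mathcal{C}} \subseteq \hat{\mathcal{B}}$, I would simply invoke part \ref{item:5b} of the previous lemma with the roles of $\mathcal{B}$ and $\mathcal{C}$ as given: since we already assume $\mathcal{C} \subseteq \hat{\mathcal{B}}$, that statement delivers $\hat{\mathcal{C}} \subseteq \hat{\mathcal{B}}$ immediately.

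For the reverse inclusion $\hat{\mathcal{B}} \subseteq \hat{\mathcal{C}}$, I would argue by monotonicity of the operation $\mathcal{D} \mapsto \mathcal{D}_\cap$. Concretely, since $\mathcal{B} \subseteq \mathcal{C}$, any element of the form $B \cap T$ with $B \in \mathcal{B}$ and $T \in \mathcal{T}$ is also of the form $C \cap T$ with $C \in \mathcal{C}$, so $\mathcal{B}_\cap \subseteq \mathcal{C}_\cap$. Taking generated $\sigma$-algebras and using part \ref{item:2b} of the previous lemma gives
\begin{equation*}
\hat{\mathcal{B}} = \sigma(\mathcal{B}_\cap) \subseteq \sigma(\mathcal{C}_\cap) = \hat{\mathcal{C}}.
\end{equation*}

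Combining the two inclusions yields the claim. There is no real obstacle here; the lemma is a bookkeeping consequence of the structural properties already recorded in Lemma \ref{lem:sigma B schnitt = hut B}, and its purpose seems to be to prepare a well-defined notion of $\sigma$-algebra generated by a Gaussian Hilbert space (independent of the choice of $\mu$-equivalence class representatives) for use in defining $\mathbb{E}(X \mid G_Y)$ later.
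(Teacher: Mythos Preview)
Your proof is correct and matches the paper's approach almost exactly. The only cosmetic difference is that for $\hat{\mathcal{C}} \subseteq \hat{\mathcal{B}}$ you cite part \ref{item:5b} of Lemma \ref{lem:sigma B schnitt = hut B}, whereas the paper re-derives this inclusion directly by noting $C\cap T \in \hat{\mathcal{B}}$ for $C\in\mathcal{C}\subseteq\hat{\mathcal{B}}$ and $T\in\mathcal{T}\subseteq\hat{\mathcal{B}}$; the reverse inclusion via $\mathcal{B}_\cap \subseteq \mathcal{C}_\cap$ is identical in both.
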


\begin{proof}
We first show $\hat{\mathcal{C}} \subseteq \hat{\mathcal{B}}$. To this end let $C \in \mathcal{C}$ and $T \in \mathcal{T}$. By assumption we find $C  \in \hat{\mathcal{B}}$ and $T \in \mathcal{T} \subseteq \hat{\mathcal{B}}$, and thus $C \cap T \in \hat{\mathcal{B}}$. This shows $\hat{\mathcal{C}}= \sigma(\mathcal{C}_\cap) \subseteq \hat{\mathcal{B}}$.

For the converse conclusion we note that $\mathcal{B} \subseteq \mathcal{C}$ implies $\mathcal{B}_\cap \subseteq \mathcal{C}_{\cap}$. By Lemma \ref{lem:sigma B schnitt = hut B} we conclude $\hat{\mathcal{B}} = \sigma(\mathcal{B}_\cap) \subseteq \sigma(\mathcal{C}_{\cap}) = \hat{\mathcal{C}}$.
\end{proof}

\begin{lemma}\label{lem:Z bis auf Nullmengen}
Let $X \in L^1(\mathcal{A}, E)$. Then for all $\sigma$-algebras $\mathcal{B} \subseteq \mathcal{C} \subseteq \hat{\mathcal{B}} \subseteq \mathcal{A}$, we have
\begin{align*}
\mathbb{E}(X|\mathcal{B}) \subseteq \mathbb{E}(X|\mathcal{C}) \subseteq \mathbb{E}(X|\hat{\mathcal{B}}).
\end{align*}
Furthermore, for all $Z \in \mathbb{E}(X|\mathcal{B})$, $Z_\mathcal{C} \in \mathbb{E}(X|\mathcal{C})$, and $\hat{Z} \in \mathbb{E}(X|\hat{\mathcal{B}})$, we have
\begin{align}\label{eq:Z bis auf Nullmengen}
\mu(Z \neq Z_\mathcal{C}) = \mu(Z \neq \hat{Z}) = \mu(Z_\mathcal{C} \neq \hat{Z}) = 0.
\end{align}
\end{lemma}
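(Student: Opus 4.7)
The key idea is that Lemma \ref{lem:BedingteErwartungErzeugendeSystem} lets us verify the defining property of a conditional expectation on a $\cap$-stable generator rather than on the whole $\sigma$-algebra. Combined with the structural facts from Lemma \ref{lem:sigma B schnitt = hut B}, namely $\Omega\in\mathcal{B}_\cap$, $\mathcal{B}_\cap$ is $\cap$-stable, and $\sigma(\mathcal{B}_\cap)=\hat{\mathcal{B}}$, this is the natural tool for transferring conditional expectations between $\mathcal{B}$, $\mathcal{C}$, and $\hat{\mathcal{B}}$. The strategy is to first prove the strongest inclusion $\mathbb{E}(X|\mathcal{B})\subseteq\mathbb{E}(X|\hat{\mathcal{B}})$, and then derive the two inclusions in the statement, and finally deduce the almost-sure equalities from the usual uniqueness of conditional expectations.

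The first and main step is to show that any $Z\in\mathbb{E}(X|\mathcal{B})$ also lies in $\mathbb{E}(X|\hat{\mathcal{B}})$. Clearly $Z$ is $\hat{\mathcal{B}}$-measurable since $\mathcal{B}\subseteq\hat{\mathcal{B}}$. For the integral identity, by Lemma \ref{lem:BedingteErwartungErzeugendeSystem} (applied with $\mathcal{E}:=\mathcal{B}_\cap$) it suffices to establish $\int_A Z\,\textup{d}\mu=\int_A X\,\textup{d}\mu$ for every $A\in\mathcal{B}_\cap$. Write $A=B\cap T$ with $B\in\mathcal{B}$ and $T\in\mathcal{T}$. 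If $\mu(T)=0$, then $A$ is a $\mu$-null set and both sides vanish. If $\mu(T)=1$, then the symmetric difference $B\symdif A\subseteq\Omega\setminus T$ is a $\mu$-null set, so Bochner integrals over $A$ and $B$ coincide, and the desired identity follows from the defining property on $\mathcal{B}$.

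Second, the inclusion $\mathbb{E}(X|\mathcal{B})\subseteq\mathbb{E}(X|\mathcal{C})$ is then immediate: any $Z\in\mathbb{E}(X|\mathcal{B})$ is $\mathcal{C}$-measurable because $\mathcal{B}\subseteq\mathcal{C}$, and by the first step the integral identity holds for all $A\in\hat{\mathcal{B}}$, hence in particular for all $A\in\mathcal{C}\subseteq\hat{\mathcal{B}}$. For the second inclusion $\mathbb{E}(X|\mathcal{C})\subseteq\mathbb{E}(X|\hat{\mathcal{B}})$, I would re-apply the first step with $\mathcal{C}$ in place of $\mathcal{B}$, obtaining $\mathbb{E}(X|\mathcal{C})\subseteq\mathbb{E}(X|\hat{\mathcal{C}})$, and then invoke the previous lemma, which gives $\hat{\mathcal{C}}=\hat{\mathcal{B}}$ under the standing hypothesis $\mathcal{B}\subseteq\mathcal{C}\subseteq\hat{\mathcal{B}}$.

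Finally, the almost-sure equalities in \eqref{eq:Z bis auf Nullmengen} follow from the two inclusions together with the almost sure uniqueness of conditional expectations (\cite[Chapter 2.6]{martingal}). Indeed, $Z$, $Z_\mathcal{C}$, and $\hat{Z}$ all lie in $\mathbb{E}(X|\hat{\mathcal{B}})$, hence coincide pairwise outside a $\mu$-null set. The only genuinely non-routine point of the proof is the treatment of the case $\mu(T)=1$ in the first step; once one recognises that $B$ and $B\cap T$ differ by a null set, the rest of the argument reduces to bookkeeping with Lemma \ref{lem:BedingteErwartungErzeugendeSystem} and Lemma \ref{lem:sigma B schnitt = hut B}.
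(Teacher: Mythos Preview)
Your proposal is correct and follows essentially the same approach as the paper: both proofs establish $\mathbb{E}(X|\mathcal{B})\subseteq\mathbb{E}(X|\hat{\mathcal{B}})$ via Lemma~\ref{lem:BedingteErwartungErzeugendeSystem} applied to $\mathcal{B}_\cap$ with the same case split on $\mu(T)\in\{0,1\}$, then deduce the intermediate inclusion for $\mathcal{C}$, and finally use $\hat{\mathcal{C}}=\hat{\mathcal{B}}$ for the second inclusion. The only cosmetic difference is that you cite the preceding (unnamed) lemma for $\hat{\mathcal{C}}=\hat{\mathcal{B}}$ and collect the almost-sure equalities at the end, whereas the paper reproves that identity inline and interleaves the uniqueness arguments with the inclusions.
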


\begin{proof}
Let us fix a $Z \in \mathbb{E}(X|\mathcal{B})$. Our first goal is to show $Z \in \mathbb{E}(X|\hat{\mathcal{B}})$. Then $Z$ is $\hat{\mathcal{B}}$-measurable. Moreover, for $B \in \mathcal{B}$ and $T \in \mathcal{T}$ with $\mu(T)=1$, we have
\begin{align}\label{eq:B cap T conditional expectation}
\int_{B \cap T} Z \, \textup{d} \mu &= \int_B Z \, \textup{d} \mu = \int_B X \, \textup{d} \mu = \int_{B \cap T} X \, \textup{d} \mu,
\end{align}
where in the first and last step we used $B = (B \cap T) \cup (B \cap T^c)$ and  $\mu(B \cap T^c) \leq \mu(T^c) = 0$.
Furthermore, for $B \in \mathcal{B}$ and $T \in \mathcal{T}$ with $\mu(T)=0$, Equation \eqref{eq:B cap T conditional expectation} is obviously satisfied. Consequently we have 
\begin{align}\label{eq:B Conditional Expectation}
 \int_B Z \, \textup{d} \mu = \int_B X \, \textup{d} \mu 
\end{align}
for all $B \in  \mathcal{B}_\cap$.
With the help of Lemma \ref{lem:sigma B schnitt = hut B} and Lemma \ref{lem:BedingteErwartungErzeugendeSystem},
we conclude that \eqref{eq:B Conditional Expectation} holds for all $B \in \hat{\mathcal{B}}$. This finishes the proof of $Z \in \mathbb{E}(X|\hat{\mathcal{B}})$.

Next, we note that for $\hat{Z} \in \mathbb{E}(X|\hat{\mathcal{B}})$ we have $\mu(Z \neq \hat{Z}) = 0$ by the almost sure uniqueness of the conditional expectation of $X$ given $\hat{\mathcal{B}}$.

Our next goal is to show that $Z \in  \mathbb{E}(X|\mathcal{C})$. Since $\mathcal{B} \subseteq \mathcal{C}$, it follows that $Z$ is $\mathcal{C}$-measurable. 
Since we have already seen that \eqref{eq:B Conditional Expectation} holds for all $B \in \hat{\mathcal{B}}$ it also holds for all $B \in \mathcal{C}$. This shows $Z \in \mathbb{E}(X|\mathcal{C})$.

Again, we note that for $Z_\mathcal{C} \in \mathbb{E}(X|\mathcal{C})$ we have $\mu(Z \neq Z_\mathcal{C}) = 0$ by the almost sure uniqueness of the conditional expectation of $X$ given ${\mathcal{C}}$.

Our last goal is to establish the inclusion $\mathbb{E}(X|\mathcal{C}) \subseteq \mathbb{E}(X|\hat{\mathcal{B}})$ together with the last identity of \eqref{eq:Z bis auf Nullmengen}. To this end we note that we already know that $ \mathbb{E}(X|\mathcal{C}) \subseteq \mathbb{E}(X|\hat{\mathcal{C}})$ and $\mu(Z_\mathcal{C} \neq \hat{Z}) = 0$ for all $Z_\mathcal{C} \in \mathbb{E}(X|\mathcal{C})$ and $\hat{Z} \in \mathbb{E}(X|\hat{\mathcal{C}})$. Therefore it suffices to show that $\hat{\mathcal{C}}=\hat{\mathcal{B}}$. For the proof of the inclusion $\hat{\mathcal{C}}\subseteq \hat{\mathcal{B}}$. We fix a $C \in \mathcal{C}$ and a $T \in \mathcal{T}$. This gives $C \in \hat{ \mathcal{B}}$ by assumption and $ T \in \hat{\mathcal{B}}$ by the definition of $\hat{\mathcal{B}}$, and therefore $C \cap T \in \hat{\mathcal{B}}$. In other words we have $\mathcal{C}_\cap \subseteq \hat{\mathcal{B}}$ and by Lemma \ref{lem:sigma B schnitt = hut B}, we conclude $\hat{\mathcal{C}} = \sigma(\mathcal{C}_\cap) \subseteq \hat{\mathcal{B}}$. The converse inclusion follows by $\mathcal{B} \subseteq \mathcal{C}$.
\end{proof}

In the following we need the space 
\begin{align*}
\mathcal{L}^2(\mathcal{A}) := \left\{ g: \Omega \to \mathbb{R} \, \middle| \, g \, \, \textup{is measurable}, \, \int_\Omega g^2 \, \textup{d} \mu < \infty \right\}
\end{align*}
equipped with the usual $\Vert \mycdot \Vert_{\mathcal{L}^2(\mathcal{A})}$ semi-norm. Moreover, for $f\in \mathcal{L}^2(\mathcal{A})$ we denote its $\mu$-equivalence class by $[f]$. Note that we have $[f] \in L^2(\mathcal{A})$.

\begin{lemma}\label{lem:G Hut}
Let $G \subseteq \mathcal{L}^2(\mathcal{A})$ be non empty and 
\begin{align}\label{eq:G Hut}
\hat{G}:= \left\{ g \in \mathcal{L}^2(\mathcal{A}) \, \middle| \, \exists (g_n) \subseteq G: [g_n] \to [g] \, \textup{in } L^2(\mathcal{A}) \right\}.
\end{align}
Then for $\mathcal{B}:=\sigma(G)$ we have $\mathcal{B} \subseteq \sigma(\hat{G}) \subseteq \hat{\mathcal{B}}$.
\end{lemma}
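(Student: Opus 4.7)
The plan is to handle the two inclusions separately. The first inclusion $\mathcal{B} \subseteq \sigma(\hat{G})$ is essentially trivial: for any $g \in G$, the constant sequence $g_n := g$ satisfies $[g_n] \to [g]$ in $L^2(\mathcal{A})$, so $g \in \hat{G}$. Thus $G \subseteq \hat{G}$ and consequently $\mathcal{B} = \sigma(G) \subseteq \sigma(\hat{G})$. All the work goes into the second inclusion $\sigma(\hat{G}) \subseteq \hat{\mathcal{B}}$, for which it suffices to show that every $g \in \hat{G}$ is $\hat{\mathcal{B}}$-measurable.

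The plan for the nontrivial inclusion is as follows. Fix $g \in \hat{G}$ with approximating sequence $(g_n) \subseteq G$ such that $[g_n] \to [g]$ in $L^2(\mathcal{A})$. Since each $g_n$ is $\sigma(G) = \mathcal{B}$-measurable, I would pass to a subsequence (renaming it $(g_n)$) to ensure $g_n \to g$ pointwise on $\Omega \setminus N$ for some null set $N \in \mathcal{A}$ with $\mu(N) = 0$. Then I define the pointwise limsup $\tilde{g} := \limsup_{n \to \infty} g_n$, which is $\mathcal{B}$-measurable as the limsup of $\mathcal{B}$-measurable real-valued functions, and which agrees with $g$ on $\Omega \setminus N$.

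Next I would fix a Borel set $A \subseteq \mathbb{R}$ and decompose
\begin{align*}
\{g \in A\} = \bigl(\{\tilde{g} \in A\} \cap (\Omega \setminus N)\bigr) \cup \bigl(\{g \in A\} \cap N\bigr).
\end{align*}
For the first piece, $\{\tilde{g} \in A\} \in \mathcal{B}$ and $\Omega \setminus N \in \mathcal{T}$ (since $\mu(\Omega \setminus N) = 1$), so this piece lies in $\mathcal{B}_\cap$. For the second piece, $\{g \in A\} \cap N \in \mathcal{A}$ with $\mu$-measure $0$, hence it lies in $\mathcal{T}$, and writing it as $\Omega \cap (\{g \in A\} \cap N)$ places it in $\mathcal{B}_\cap$ as well. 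Thus $\{g \in A\} \in \sigma(\mathcal{B}_\cap) = \hat{\mathcal{B}}$, showing $g$ is $\hat{\mathcal{B}}$-measurable. Since this holds for every $g \in \hat{G}$, the inclusion $\sigma(\hat{G}) \subseteq \hat{\mathcal{B}}$ follows.

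The only genuine subtlety is that $g$ itself need not be $\mathcal{B}$-measurable — only its $\mu$-equivalence class coincides with that of a $\mathcal{B}$-measurable limit $\tilde{g}$. This is exactly what the enlargement $\hat{\mathcal{B}}$ absorbs: by design $\hat{\mathcal{B}}$ contains every set that differs from a $\mathcal{B}$-set by a null set, and this is the only gap that needs closing.
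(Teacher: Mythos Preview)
Your proof is correct and follows essentially the same route as the paper: extract an almost-surely convergent subsequence, build a measurable proxy for $g$ that agrees with it off a null set, and split $g^{-1}(A)$ into a piece on the convergence set and a null remainder. The only cosmetic difference is that the paper constructs the proxy as $\chi_T g$ (a pointwise limit of $\chi_T g_{n_k}$, hence $\hat{\mathcal{B}}$-measurable), whereas your $\tilde g := \limsup g_n$ is directly $\mathcal{B}$-measurable, which makes the first piece land in $\mathcal{B}_\cap$ rather than merely in $\hat{\mathcal{B}}$ --- a slight streamlining, but not a different argument.
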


\begin{proof}
The statement $\mathcal{B} \subseteq \sigma(\hat{G})$ follows directly from the fact that $G \subseteq \hat{G}$.

Next, we prove the statement $\sigma(\hat{G}) \subseteq \hat{\mathcal{B}}$.
To this end it suffices to show that all $g \in \hat{G}$ are $\hat{\mathcal{B}}$-measurable. 
Let us therefore fix a $g \in \hat{G}$. Then there exists a sequence $(g_n) \subseteq G$ such that $[g_n] \to [g]$ in $L^2(\mathcal{A})$. 
By the properties of convergence in $L^2(\mathcal{A})$, there exists a subsequence $(g_{n_k})$ such that $g_{n_k} \to g$ $\mu$-almost surely. 
Consequently the set $T := \{ \omega \in \Omega \mid g_{n_k}(\omega) \to g(\omega) \}$ is $\mathcal{A}$-measurable with $\mu(T) = 1$, i.e., we have $T \in \mathcal{T}$.

Let us now define $\hat{g}_{n_k} := \chi_T g_{n_k}$ and $\hat{g} := \chi_T g$. Obviously we have $\hat{g}_{n_k} \to \hat{g}$ pointwise. 
Moreover, the definition of $\hat{G}$ ensures $\hat{g}_{n_k}, \hat{g} \in \hat{G}$. Furthermore, $g_{n_k} \in G$ shows that $g_{n_k}$ is $\mathcal{B}$-measurable.  
Since $\chi_T$ is $\mathcal{T}$-measurable and $\mathcal{T} \subseteq\hat{\mathcal{B}}$, it follows that $\hat{g}_{n_k} = \chi_T g_{n_k}$ is $\hat{\mathcal{B}}$-measurable. 
Thus, the limit function $\hat{g}$ is also $\hat{\mathcal{B}}$-measurable.

With these preparations we now show that $g^{-1}(A) \in \hat{\mathcal{B}}$ for all measurable $A \subseteq \mathbb{R}$. To this end we note that 
\begin{align*} 
g^{-1}(A) &= \{ \omega \in \Omega \, | \, g(\omega) \in A \} \\
&=  \bigl(  \{ \omega \in \Omega \, |\, g(\omega) \in A \} \cap T \bigr) \cup \bigl( \{ \omega \in \Omega \, | \, g(\omega) \in A \} \setminus T \bigr) \\
&= \bigl( \{ \omega \in \Omega \, | \, \hat{g}(\omega) \in A \} \cap T \bigr) \cup \bigl( \{ \omega \in \Omega \, | \, g(\omega) \in A \} \setminus T \bigr).
\end{align*}
Since $\{ \omega \in \Omega \mid \hat{g}(\omega) \in A \} \in \hat{\mathcal{B}}$ and $T \in \hat{\mathcal{B}}$, it follows that $\{ \omega \in \Omega \mid \hat{g}(\omega) \in A \} \cap T \in \hat{\mathcal{B}}$. Furthermore, we have
\begin{align*}
\mu\bigl( \{ \omega \in \Omega \, | \, g(\omega) \in A \} \setminus T \bigr) &= \mu \bigl( \{ \omega \in \Omega \, | \, g(\omega) \in A \} \cap (\Omega \setminus T) \bigr) \\
&\leq \mu(\Omega \setminus T) = 0.
\end{align*}
Therefore we have $\{ \omega \in \Omega \mid g(\omega) \in A \} \setminus T \in \hat{\mathcal{B}}$ and by combining this with our previous consideration we conclude $g^{-1}(A) \in \hat{\mathcal{B}}$.
\end{proof}

With these preparations and under \hyperref[Assumption:A]{Assumption A} we now define  
\begin{align*}
\mathbb{E}(X|G_Y):= \mathbb{E}(X|\s(\hat{G})),
\end{align*} 
where 
$G:=\{  f'(Y) \, | \, f' \in F' \}$ and $\hat{G}$ is defined by \eqref{eq:G Hut}. The following theorem describes $\mathbb{E}(X|G_Y)$.

\begin{theorem}\label{thm:BedingteErwartungenHüte}
Let \hyperref[Assumption:A]{Assumption A} be satisfied. Then, we have
\begin{align*}
\mathbb{E}(X|\sigma(Y)) \subseteq \mathbb{E}(X|G_Y) \subseteq \mathbb{E}(X|\widehat{\sigma(Y)}).
\end{align*}
Furthermore, for all $Z \in \mathbb{E}(X|\s(Y))$, $Z_G \in \mathbb{E}(X|G_Y)$, and $\hat{Z} \in \mathbb{E}(X|\widehat{\s(Y)})$, we have
\begin{align*}
\mu(Z \neq Z_G) = \mu(Z \neq \hat{Z}) = \mu(Z_G \neq \hat{Z}) = 0.
\end{align*}
\end{theorem}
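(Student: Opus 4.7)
The proof plan is essentially to reduce the statement to the already-established Lemma \ref{lem:Z bis auf Nullmengen} after identifying the correct $\sigma$-algebras. By the very definition above the theorem, $\mathbb{E}(X|G_Y) := \mathbb{E}(X|\sigma(\hat G))$ where $G := \{f'(Y) \,|\, f' \in F'\}$ and $\hat G$ is the extension from \eqref{eq:G Hut}. So the chain of inclusions is really a statement about the three $\sigma$-algebras
\[
\sigma(Y), \quad \sigma(\hat G), \quad \widehat{\sigma(Y)}.
\]

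First I would observe, via Lemma \ref{lem:algebren gleich}, that $\sigma(Y) = \sigma(G)$; call this common $\sigma$-algebra $\mathcal{B}$. Then Lemma \ref{lem:G Hut} applied to the set $G$ gives exactly
\[
\mathcal{B} \,\subseteq\, \sigma(\hat G) \,\subseteq\, \hat{\mathcal{B}} \,=\, \widehat{\sigma(Y)}.
\]
At this point the theorem is a direct application of Lemma \ref{lem:Z bis auf Nullmengen} with $\mathcal{C} := \sigma(\hat G)$: that lemma gives both the inclusions
\[
\mathbb{E}(X|\mathcal{B}) \subseteq \mathbb{E}(X|\mathcal{C}) \subseteq \mathbb{E}(X|\hat{\mathcal{B}})
\]
and the fact that any three representatives $Z, Z_G, \hat Z$ (one from each set on the right-hand side) agree $\mu$-almost everywhere. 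Translating back via $\mathcal{B} = \sigma(Y)$ and $\mathcal{C} = \sigma(\hat G)$ and the definition $\mathbb{E}(X|G_Y) = \mathbb{E}(X|\sigma(\hat G))$ gives exactly the statement of the theorem.

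There is essentially no obstacle left: all the work has been done in the preceding lemmas. The only thing to double-check is that the two hypotheses of Lemma \ref{lem:Z bis auf Nullmengen} are met, namely that $\mathcal{B} \subseteq \mathcal{C} \subseteq \hat{\mathcal{B}}$ and that $X \in L^1(\mathcal{A},E)$; the former is Lemma \ref{lem:G Hut} combined with Lemma \ref{lem:algebren gleich} as above, and the latter follows from \hyperref[Assumption:A]{Assumption A} since Gaussian random variables lie in all Bochner $L^p$ spaces by Fernique's theorem. Thus the proof is just a matter of assembling these cited results in the right order.
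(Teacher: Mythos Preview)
Your proposal is correct and follows essentially the same route as the paper's own proof: identify $\sigma(Y)=\sigma(G)$ via Lemma~\ref{lem:algebren gleich}, apply Lemma~\ref{lem:G Hut} to obtain $\mathcal{B}\subseteq\sigma(\hat G)\subseteq\hat{\mathcal{B}}$, and then invoke Lemma~\ref{lem:Z bis auf Nullmengen} with $\mathcal{C}=\sigma(\hat G)$. Your additional remark that $X\in L^1(\mathcal{A},E)$ follows from Fernique's theorem is a helpful clarification that the paper leaves implicit.
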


\begin{proof}
We set $\mathcal{B}:= \s(G)$ and obtain $\mathcal{B} \subseteq \s(\hat{G}) \subseteq \hat{\mathcal{B}}$ by Lemma \ref{lem:G Hut}. For $\mathcal{C}:=\s(\hat{G})$ Lemma \ref{lem:Z bis auf Nullmengen} thus shows 
\begin{align*}
\mathbb{E}(X|\mathcal{B}) \subseteq \mathbb{E}(X|\sigma(\hat{G})) \subseteq \mathbb{E}(X|\hat{\mathcal{B}})
\end{align*}
and \eqref{eq:Z bis auf Nullmengen}. Finally, Lemma \ref{lem:algebren gleich} shows $\s(Y)=\s(G)=\mathcal{B}$, thus the assertion follows.
\end{proof}

\begin{lemma}\label{lem:Bounded Gaussian random variables}
Let $(r_j) \sim \mathcal{N}(0,1)$ be a sequence of i.i.d. random variables, then 
\begin{align*}
r_j \cdot \left(\sqrt{\ln( j \ln^2(j+1))}\right)^{-1} 
\end{align*}
is a bounded sequence $\mu$-almost everywhere.
\end{lemma}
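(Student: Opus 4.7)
The plan is to apply a Gaussian tail estimate together with the Borel--Cantelli lemma. Set $a_j := \sqrt{\ln(j \ln^2(j+1))}$, and for a fixed constant $K > 0$ define the events
\begin{align*}
A_j^K := \bigl\{\omega \in \Omega \,\big|\, |r_j(\omega)| > K a_j \bigr\}.
\end{align*}
The goal is to show that for a suitable $K$, the events $A_j^K$ occur only for finitely many $j$ with $\mu$-probability one; this immediately yields that $|r_j|/a_j \leq K$ eventually, so that the sequence is almost surely bounded (the finitely many exceptional terms being absorbed into a larger uniform constant).

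First I would recall the standard Gaussian tail bound $\mu(|r_j| > t) \leq 2 e^{-t^2/2}$ valid for all $t > 0$; this can be obtained by a one-line estimate on $\int_t^\infty e^{-x^2/2}\,\textup{d}x$. Substituting $t = K a_j$ gives
\begin{align*}
\mu(A_j^K) \Leq 2 \exp\!\bigl(-\tfrac{K^2}{2}\ln(j\ln^2(j+1))\bigr) = \frac{2}{\bigl(j \ln^2(j+1)\bigr)^{K^2/2}}.
\end{align*}
Choosing $K := \sqrt 2$ (so $K^2/2 = 1$), the series $\sum_{j \geq 1}\mu(A_j^K)$ is dominated by a constant multiple of $\sum_{j \geq 2} 1/(j \ln^2 j)$, which converges by the integral criterion, since $\int_2^\infty \tfrac{\textup{d}x}{x \ln^2 x} = [-1/\ln x]_2^\infty < \infty$.

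Now the first Borel--Cantelli lemma gives $\mu(\limsup_j A_j^K) = 0$, so outside a null set $N$ there exists an index $J(\omega)$ such that $|r_j(\omega)| \leq \sqrt 2 \,a_j$ for all $j \geq J(\omega)$. Setting
\begin{align*}
C(\omega) := \max\!\Bigl\{\sqrt 2, \;\max_{1\leq j < J(\omega)} |r_j(\omega)|/a_j\Bigr\}
\end{align*}
(well defined since $a_j > 0$ for all $j \geq 1$) yields $|r_j(\omega)|/a_j \leq C(\omega)$ for every $j$ and every $\omega \in \Omega \setminus N$, which is exactly the claim. The only mild technical point to check is that $a_j$ is indeed strictly positive for all $j$ in the range of interest (one needs $j \ln^2(j+1) > 1$, which holds for $j \geq 2$, while $j = 1$ can be absorbed into $C(\omega)$ trivially); this is routine, so I do not anticipate any real obstacle in the argument.
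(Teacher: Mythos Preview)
Your proof is correct and follows essentially the same approach as the paper: both apply the Gaussian tail bound $\mu(|r_j|>t)\leq 2e^{-t^2/2}$ with $t=\sqrt{2}\,a_j$, obtain the summable bound $2/(j\ln^2(j+1))$, and invoke the first Borel--Cantelli lemma. Your version is slightly more explicit about absorbing the finitely many exceptional indices into a random constant $C(\omega)$ and about the edge case $j=1$ (where $a_1$ is not well defined), points the paper glosses over.
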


\begin{proof}
We define 
\begin{align*}
E_j := \left\{ \left| r_j \cdot \left(\sqrt{\ln( j \ln^2(j+1))}  \right)^{-1} \right| > \sqrt{2} \right\}
\end{align*}
and $E := \limsup_{j \to \infty} E_j$. By the Borel-Cantelli lemma, we have $P(E)=0$ if $\sum_{j=1}^\infty P(E_j) < \infty$. 
Using a well known tail bound for one dimensional standard normal random variables we find $P(|r_j|>t) \leq 2 \mathrm{e}^{-t^2/2}$ for all $t>0$. We thus obtain 
\begin{align*}
\sum_{j=1}^\infty P(E_j) 
= \sum_{j=1}^\infty P\left( |r_j| > \sqrt{2} \cdot \sqrt{\ln( j \ln^2(j+1))}\right) 
&\leq \sum_{j=1}^\infty 2 \mathrm{e}^{-\left(2 \cdot \sqrt{\ln( j \ln^2(j+1))}^2/2 \right) } \\
&= \sum_{j=1}^\infty 2 \mathrm{e}^{-\left(2 \cdot \ln( j \ln^2(j+1))/2 \right) } \\
&= \sum_{j=1}^\infty \frac{2}{ j \ln^2(j+1)  } \, . 
\end{align*} 
The latter series converges, thus $P(E)=0$.
\end{proof}

\begin{lemma}\label{lem:GY=GE}
Let $X:\Omega \to E$ be a Gaussian random variable and $\mathcal{E} \subset E'$. Then there exists a Gaussian random variable $Y:\Omega \to \ell^2(\mathbb{N})$ such that for all $Z_\mathcal{E} \in \mathbb{E}\left(X| \sigma ( \{e'(X) \, : \, e' \in \mathcal{E} \} ) \right)$ and $Z \in \mathbb{E}(X|Y)$ we have $\mu(Z_\mathcal{E}\neq Z)=0$. Moreover we have 
\begin{align*}
G_Y=G_\mathcal{E}:=\overline{\textup{span} \{ e'(X) \, | \, e' \in \mathcal{E} \}}^{\Vert \mycdot \Vert_{L^2(\mathcal{A})}}.
\end{align*}
\end{lemma}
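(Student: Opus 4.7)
The plan is to build $Y$ directly from an orthonormal basis of $G_\mathcal{E}$ and then identify the two conditional expectations via the hat-$\s$-algebra machinery of Lemmata \ref{lem:sigma B schnitt = hut B}--\ref{lem:Z bis auf Nullmengen}.

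First I will exploit separability: since $E$ is separable, $W_X\subseteq E$ is separable and, via the isometric isomorphism $V_X^{-1}$ of Lemma \ref{lem:Unitar}, so is $G_X$. Consequently the closed subspace $G_\mathcal{E}\subseteq G_X$ is a separable Hilbert space, and admits a countable orthonormal basis $(r_j)_{j\in\mathbb{N}}$ (padded by zeros if $G_\mathcal{E}$ is finite-dimensional); orthonormality in $L^2(\mathcal{A})$ combined with joint Gaussianity forces the $r_j$ to be i.i.d.\ $\mathcal{N}(0,1)$.

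Next, I will define $Y:=(2^{-j}r_j)_{j\in\mathbb{N}}$. From $\mathbb{E}\snorm{Y}_{\ell^2}^2=\sum_j 4^{-j}<\infty$ it follows that $Y\in\ell^2$ almost surely, and the truncations $(2^{-1}r_1,\dots,2^{-n}r_n,0,0,\dots)$ converge to $Y$ almost surely in $\ell^2$-norm, so after modification on a null set $Y$ is strongly measurable. For every $h=(h_j)_j\in(\ell^2)'\cong\ell^2$ the series $h(Y)=\sum_j h_j 2^{-j}r_j$ is $L^2(\mathcal{A})$-Cauchy since $\sum_j (h_j 2^{-j})^2\leq\snorm{h}_{\ell^2}^2<\infty$, and its limit lies in the closed subspace $G_\mathcal{E}$ and is hence Gaussian. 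This shows that $Y$ is an $\ell^2$-valued Gaussian random variable and at the same time gives $G_Y\subseteq G_\mathcal{E}$. The reverse inclusion $G_\mathcal{E}\subseteq G_Y$ follows from the observation that $r_k=2^k\pi_k(Y)\in G_Y$ for the $k$-th coordinate functional $\pi_k\in(\ell^2)'$, after taking closed spans.

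Finally I turn to the conditional expectation equality. Put $\mathcal{B}_1:=\s(\{e'(X)\,:\,e'\in\mathcal{E}\})$ and $\mathcal{B}_2:=\s(Y)$; by Lemma \ref{lem:Z bis auf Nullmengen} it suffices to show $\widehat{\mathcal{B}_1}=\widehat{\mathcal{B}_2}$. Each $r_k\in G_\mathcal{E}$ is an $L^2$-limit of finite linear combinations of $\{e'(X)\,:\,e'\in\mathcal{E}\}$, so by Lemma \ref{lem:G Hut} each $r_k$, and therefore each coordinate $Y_k=2^{-k}r_k$, is $\widehat{\mathcal{B}_1}$-measurable. Separability of $\ell^2$ guarantees that its Borel $\s$-algebra is generated by the coordinate projections $(\pi_k)$, so $Y$ itself is $\widehat{\mathcal{B}_1}$-measurable; this gives $\mathcal{B}_2\subseteq\widehat{\mathcal{B}_1}$, whence $\widehat{\mathcal{B}_2}\subseteq\widehat{\mathcal{B}_1}$ by Lemma \ref{lem:sigma B schnitt = hut B}. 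For the reverse inclusion, every $e'(X)\in G_\mathcal{E}$ is the $L^2$-limit of the partial Fourier expansion $\sum_{k=1}^n\langle e'(X),r_k\rangle r_k$, each term of which is $\mathcal{B}_2$-measurable, so $e'(X)$ is $\widehat{\mathcal{B}_2}$-measurable by Lemma \ref{lem:G Hut}, and $\widehat{\mathcal{B}_1}\subseteq\widehat{\mathcal{B}_2}$ follows. The most delicate step is the lift of coordinate-wise $\widehat{\mathcal{B}_1}$-measurability to strong measurability of the $\ell^2$-valued map $Y$, which rests on the separability of $\ell^2$; once this is in hand, Lemma \ref{lem:Z bis auf Nullmengen} immediately yields $\mu(Z_\mathcal{E}\neq Z)=0$.
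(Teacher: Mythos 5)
Your proposal is correct and follows essentially the same route as the paper: build $Y$ from a weighted orthonormal basis of $G_\mathcal{E}$, recover $G_Y=G_\mathcal{E}$ via the coordinate functionals, and sandwich $\sigma(\{e'(X):e'\in\mathcal{E}\})\subseteq\widehat{\sigma(Y)}\subseteq\widehat{\sigma(\{e'(X):e'\in\mathcal{E}\})}$ before invoking Lemma \ref{lem:Z bis auf Nullmengen}. The only substantive differences are technical: your weights $2^{-j}$ yield almost sure $\ell^2$-membership directly from $\mathbb{E}\Vert Y\Vert_{\ell^2}^2<\infty$, so the Borel--Cantelli argument of Lemma \ref{lem:Bounded Gaussian random variables} and the independence of the $r_j$ are not needed; and since you take an arbitrary ONB of $G_\mathcal{E}$ rather than one produced by Gram--Schmidt inside $\textup{span}\{e'(X)\,:\,e'\in\mathcal{E}\}$ as the paper does, your appeals to Lemma \ref{lem:G Hut} must be made with $G$ equal to the relevant linear span (the lemma as stated takes limits of sequences in $G$ itself), and the partial sums $\sum_{k\le n}\langle e'(X),r_k\rangle r_k$ are only $\widehat{\sigma(Y)}$- rather than $\sigma(Y)$-measurable because of the null-set modification of $Y$ --- both points are harmless since the hat operation is idempotent by Lemma \ref{lem:sigma B schnitt = hut B}.
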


\begin{proof}
For simplicity we prove the statement only for $ \dim   G_\mathcal{E}    = \infty$. The finite dimensional cases are analogous yet simpler. 

Let us fix an ONB $([r_j])$ of $G_\mathcal{E}$ such that $r_j \in \textup{span} ( \{e'(X) \, | \, e' \in \mathcal{E} \} )$ for all $ j \geq 1 $. Here we note that this is possible by the Gram-Schmidt construction if in each step we pick a vector in $\textup{span} ( \{e'(X) \, | \, e' \in \mathcal{E} \} )$ that is linearly independent of the previously constructed members of the ONB.

 We further note that $r_j \sim \mathcal{N}(0,1)$.
Moreover for each $(r_{j_1}, \dots , r_{j_m})$ we know that for linear combinations we have
$\sum_{i=1}^m a_i r_{j_i}\in \textup{span}(\mathcal{E} \circ X )$
 and hence $(r_{j_1}, \dots , r_{j_m})$ is a Gaussian vector. Since it components are pairwise uncorrelated we conclude that $(r_{j_1}, \dots , r_{j_m})$ are independent. In other words, $(r_j)$ is an i.i.d. sequence.

Let $(e_j)$ be the standard ONB of $\ell^2(\mathbb{N})$ and 
\begin{align*}
\alpha_j&:= \left( \sqrt{\ln( j \ln^2(j+1))}\right)^{-1} \\
Y_n&:= \sum_{j=1}^n \alpha_j r_j   \frac{e_j}{j} \, . 
\end{align*} 
We note that by Lemma \ref{lem:Bounded Gaussian random variables} we $\mu$-almost surely have
\begin{align*}
\left\Vert \left( \alpha_j r_j  \frac{1}{j} \right) \right\Vert_{\ell^2(\mathbb{N})}^2 
\leq \left\Vert \left( \alpha_j r_j   \right) \right\Vert_{\ell^\infty(\mathbb{N})}^2 \cdot  \left\Vert \left( \frac{1}{j} \right) \right\Vert_{\ell^2(\mathbb{N})}^2 < \infty,
\end{align*} 
and thus 
\begin{align*}
\sum_{j=1}^\infty \alpha_j r_j   \frac{e_j}{j} \,
\end{align*} 
converges almost everywhere in $\ell^2(\mathbb{N})$. We denote the set where the series does converge by $T$ and we set 
\begin{align*}
Y(\omega) := \begin{cases} 
\sum_{j=1}^\infty \alpha_j r_j (\omega) \frac{e_j}{j}, \, &{\omega \in T}, \\
0, &{ \text{else}}. 
\end{cases} \, 
\end{align*}
Next we prove that $Y:\Omega \to \ell^2(\mathbb{N})$ is a Gaussian random variable.
To this end let us fix an $a' \in \ell^2(\mathbb{N})$. Then, for $\omega \in T$, we have 
\begin{align*}
a'(Y-Y_n) 
= a' \left( \sum_{j=n+1}^\infty \alpha _j r_j (\omega) \frac{e_j}{j} \right) = \sum_{j=n+1}^\infty \alpha_j \frac{a'(e_j)}{j} r_j (\omega).
\end{align*} 
By Parseval's identity and the fact that $([r_j])$ is an orthonormal system in $L^2(\mathcal{A})$ we conclude
\begin{align}\label{eq:ayinl2}
\int_\Omega a'(Y-Y_n)^2 \, \textup{d} \mu 
= \sum_{j=n+1}^\infty   \left(  \alpha_j \frac{a'(e_j)}{j} \right)^2 
\leq \Vert a' \Vert_{\ell^2(\mathbb{N})}^2   \sum_{j=n+1}^\infty \frac{\alpha_j^2}{j^2} \to 0.  
\end{align}
Due to the fact that each $a'(Y_n)$ is a Gaussian random variable and $a'(Y_n) \to a'(Y)$ in $L^2(\mathcal{A})$ it follows that $Y$ is indeed a Gaussian random variable. 

Our next goal is to establish the inclusion
\begin{align}\label{eq:EXInclusion}
\sigma(\mathcal{E} \circ X) \subseteq \widehat{\sigma(Y)} \subseteq \widehat{\sigma(\mathcal{E} \circ X)}.
\end{align}
To this end we fix a $g \in \mathcal{E} \circ X$. Then there exists a sequence $(b_j) \in \ell^2(\mathbb{N})$ such that 
$g = \sum_{j=1}^\infty b_j r_j$ in $\mathcal{L}^2(\mathcal{A})$. By 
\begin{align}\label{eq:rjinGy}
e_j(Y)(\omega) = \begin{cases}
\alpha_j r_j (\omega)   \frac{1}{j}, &\text{ if } \omega \in T \\
0, &\text{ else }
\end{cases}
\end{align}  
we conclude that
\begin{align*}
\chi_T r_j = \frac{j}{\alpha_j} e_j(Y).
\end{align*} 
Therefore, $\chi_T r_j$ is $\s(Y)$-measurable and thus also $\widehat{\s(Y)}$-measurable. Using $r_j=\chi_T r_j + \chi_{\Omega \setminus T} r_j$ we conclude that $r_j$ is $\widehat{\sigma(Y)}$-measurable. Consequently, each finite sum $\sum_{j=1}^m b_j r_j$ is $\widehat{\s(Y)}$-measurable. Since a subsequence of these finite sums converge almost surely to $g$ we conclude that $g$ itself is $\widehat{\sigma(Y)}$-measurable. This shows the first inclusion $\s(\mathcal{E} \circ X) \subseteq \widehat{\s(Y)}$.

For the second inclusion $\widehat{\sigma(Y)} \subseteq \widehat{\s(\mathcal{E}\circ X)}$ we first prove $\s(Y) \subseteq \widehat{\s(\mathcal{E}\circ X)}$. To this end, we note that for $a' \in \ell^2(\mathbb{N})$ we have 
\begin{align}\label{eq:ayreihe}
a'(Y) = \sum_{j=1}^\infty \chi_T \alpha_j r_j  \frac{a'(e_j)}{j}
\end{align}
with pointwise convergence.
We further note that $\chi_T$ is $\mathcal{T}$-measurable and that all $r_j$ are $\sigma(\mathcal{E} \circ X)$-measurable since by our construction they are finite linear combinations of elements in $\mathcal{E} \circ X$. Therefore, $a'(Y)$ is $\widehat{\s(\mathcal{E} \circ X)}$-measurable and by Lemma \ref{lem:algebren gleich} we find $\s(Y) \subseteq \widehat{\s(\mathcal{E}\circ X)}$. By Lemma \ref{lem:sigma B schnitt = hut B} we obtain $\widehat{\sigma(Y)} \subseteq \widehat{\s(\mathcal{E}\circ X)}$. 

Combining \eqref{eq:EXInclusion} with Theorem \ref{thm:BedingteErwartungenHüte} we obtain the first assertion.

For the proof of $G_\mathcal{E}\subseteq G_Y$ we first note that $[r_j] \in G_Y$ by \eqref{eq:rjinGy}. Since $([r_j])$ is an ONB of $G_\mathcal{E}$ and $G_Y$ is a closed subspace of $L^2(\mathcal{A})$ we then find the desired inclusion.

To prove the other inclusion $G_Y \subseteq G_\mathcal{E}$, we fix an $a' \in \ell^2(\mathbb{N})$. Analogously to \eqref{eq:ayinl2} we have 
\begin{align*}
\left( \alpha_j \frac{a'(e_j)}{j} \right) \in \ell^2(\mathbb{N}).
\end{align*}
By \eqref{eq:ayreihe} we conclude that $a'(Y) \in G_\mathcal{E}$, and since $G_\mathcal{E}$ is a closed space we then find $G_Y \subseteq G_\mathcal{E}$.
\end{proof}

\begin{proof}[Proof of Theorem \ref{Theorem: Kernel Conditioning}]
    We set $G_\mathcal{E}:= \overline{\textup{span}\{ e'(X) \, | \, e' \in \mathcal{E} \}}^{\Vert \cdot \Vert_{L^2(\mathcal{A})}}$ and use Lemma \ref{Lemma: e' commutes with conditioning} to obtain
\begin{align*}
e'(X_\mathcal{E}) = e'\left( \mathbb{E}(X| \sigma( \{ e'(X) \, | \, e' \in \mathcal{E} \}) \right) =  \mathbb{E}(e'(X) | \sigma( \{ e'(X) \, | \, e' \in \mathcal{E} \}) )
\end{align*}
for all $e'\in E'$.
We now fix a Gaussian random variable $Y$ according to Lemma \ref{lem:GY=GE}. This leads to 
\begin{align*}
\mathbb{E}(e'(X) | \sigma( \{ e'(X) \, | \, e' \in \mathcal{E} \}) ) = \mathbb{E}(e'(X)|Y).
\end{align*}
   By \cite[Theorem 3.3 vii)]{IngoConditioning} and Lemma \ref{lem:GY=GE} we also have 
   \begin{align*}
   \mathbb{E}(e'(X)|Y) = \Pi_{G_Y} e'(X) = \Pi_{G_\mathcal{E}} e'(X)=e'\left( \Pi_{G_\mathcal{E}} X \right)
   \end{align*} 
   where in the last step we used Lemma \ref{lem:Projektions Darstellungen}. In summary we have 
   \begin{align}\label{e'Xe=PiGX}
   e'(X_\mathcal{E}) = e'(\Pi_{G_\mathcal{E}} X)
   \end{align}
   almost surely.
   We conclude by Lemma \ref{lem:Hahn-Banach für Zufallsvariablen}, that $\Pi_{G_\mathcal{E}}X=X_\mathcal{E}$ and by Lemma \ref{lem:Projektions Darstellungen}, that $X_\mathcal{E}$ is a Gaussian random variable.

Next we prove that the kernel $k_{\mathcal{E}}$ is given by 
    \begin{align*}
    k_{\mathcal{E}} (\mycdot,e') =  \Pi_{H(\mathcal{E})} k_X(\mycdot,e')\, \quad \, \textup{with} \,  \, e' \in B_{E'} \, .
    \end{align*} 
Recall that the operator $U_X$ from \eqref{eq: U_X} is an isometry and for $e' \in \mathcal{E}$ we have 
\begin{align}\label{eq:UXaufKe}
U_X k_X( \mycdot , e') = V_X^{-1} \left( \int_\Omega e'(X) X \, \textup{d}\mu \right) =e'(X).
\end{align}
By taking the $\textup{span}$ and the closure we conclude $U_X{H(\mathcal{E})} = {G_\mathcal{E}}$. 
Using the reproducing property in $H_X$, Equation \eqref{eq:UXaufKe}, Lemma \ref{lem:Orthogonal projections are similiar}, and Lemma \ref{lem:Projektions Darstellungen} in combination with Equation \eqref{e'Xe=PiGX} we find  
    \begin{align*}
\left(   \Pi_{H(\mathcal{E})}k_X(\mycdot, e_1') \right) (e_2') 
&=  \langle \Pi_{H(\mathcal{E})} k_{X}(\mycdot, e_1') ,  k_{X} (\mycdot,e_2') \rangle_{H_X} \\
&= \langle U_X \Pi_{H(\mathcal{E})} U_X^* e_1'(X) , U_X \Pi_{H(\mathcal{E})} U_X^* e_2'(X) \rangle_{G_X} \\ 
&= \langle  \Pi_{G_{\mathcal{E}}} e'_1(X), \Pi_{G_{\mathcal{E}}} e'_2(X) \rangle_{G_X} \\
&= \int_\Omega e'_1(X_\mathcal{E}) e_2' ( X_\mathcal{E}) \, \textup{d} \mu \\
&=  \langle \textup{cov}(X_\mathcal{E}) e'_1, e_2' \rangle_{E,E'}=k_\mathcal{E}(e_1',e_2') \, 
    \end{align*}
for all $e'_1,e_2' \in B_{E'}$.

Next we note that $X-X_\mathcal{E}$ is Gaussian since $X,X_\mathcal{E}$ are jointly Gaussian by Lemma \ref{lem:Projektions Darstellungen}.
Moreover, for $e_1',e_2' \in B_{E'}$, we obtain by Lemma \ref{lem:Covariance Rechenregel}
\begin{align*}
k_{X-X_\mathcal{E}}(e_1',e_2') &= \langle \textup{cov}(X-X_{\mathcal{E}}) e_1',e_2' \rangle_{E,E'}  \\
&= \langle \textup{cov}(X)e_1',e_2' \rangle_{E,E'} - \langle  \textup{cov}(X_\mathcal{E}) e_1',e_2' \rangle_{E,E'} \, .
\end{align*}

       Finally we establish the last assertion. With what we have already proven we conclude
    \begin{align*}
        k_{X-X_\mathcal{E}}(e',e') = \Vert k_{X} (\cdot, e') - \Pi_{H(\mathcal{E})} k_{X} (\cdot,e') \Vert_{H_X}^2 \qquad \text{for } \, e' \in B_{E'} \, .
    \end{align*} 
With the operator norm definition this leads to
      \begin{align*}
       \Vert \textup{cov}(X-X_\mathcal{E})) \Vert_{E'\to E} 
 =         \sup_{e' \in B_{E'}}  \Vert k_{X} (\cdot, e') - k_\mathcal{E} (\cdot,e') \Vert_{H_X}^2.
      \end{align*} 
      The second identity follows  by Lemma \ref{lem:Covariance Rechenregel}.
\end{proof}

\begin{proof}[Proof of Corollary \ref{Coroallry:No Noise}]
We apply Theorem \ref{Theorem: Kernel Conditioning} with the set 
\begin{align*}
\mathcal{E}:= \{ 0 \} \cup \left\lbrace \frac{L' f'}{\Vert L' f' \Vert_{E'}} \, \middle|  \, f' \in F' \text{ with }  L'f' \neq 0 \right\},
\end{align*}
where we note that $Z=X_\mathcal{E}$ since $(L'f')(X)=f'(LX)=f'(Y)$ for all $f' \in F'$.
Therefore it remains to prove that $G_Z=G_Y$ holds true. 
To this end, we note that 
\begin{align*}
G_Y= \overline{\textup{span} \{ f'(LX) \, | \, f'\in F'\} }^{\Vert \mycdot \Vert} = \overline{\textup{span} \{ L' f' \circ X \, | \, f'\in F'\} }^{\Vert \mycdot \Vert} \subseteq G_X
\end{align*}
holds true.
Now Lemma \ref{lem:Subsets} implies $G_Y=G_Z$.
\end{proof}

\begin{proof}[Proof of Lemma \ref{Lemma:M_W Bounded}]
By the B.L.T. Theorem, see \cite[Theorem 1.7]{Reed} there exists a bounded linear extension $M:F\to E$ of $M_W:W_Y \to W_Z$. Repeating \eqref{eq:M_Wcheating} we then obtain 
\begin{align*}
M Y= M \sum_{j \in J} r_j \int_\Omega r_j Y \, \textup{d}\mu = \sum_{j \in J} r_j M_W \left( \int_\Omega r_j Y \, \textup{d} \mu \right) = \sum_{j \in J} r_j  \int_\Omega r_j Z \, \textup{d} \mu = Z.
\end{align*}  
\end{proof}  
\begin{lemma}\label{lem:isometry+surjective}
 Let \hyperref[Assumption:A]{Assumption A} be satisfied and $W_Y$ be dense in $F$. Then the following statements are equivalent:
  \begin{enumerate}[label={\roman*)},ref=\roman*)]
      \item\label{enum:1} The operator $M_W:W_Y \to W_Z$ is an isometric isomorphism. 
      \item\label{enum:2} The equality $G_Y=G_Z$ holds true. 
  \end{enumerate}
  \end{lemma}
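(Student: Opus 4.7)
The plan is to rewrite $M_W$ as the composition $\hat{V}_Z \circ V_Y^{-1}$ and reduce the isometry question to a kernel computation for $\hat{V}_Z$. First I would recall from Lemma \ref{lem:Unitar} that $\hat{V}_Y^* = V_Y^{-1}$, that $\ker \hat{V}_Z = G_Z^\perp$ (orthogonal complement inside $L^2(\mathcal{A})$), and that $V_Y : G_Y \to W_Y$ and $V_Z : G_Z \to W_Z$ are isometric isomorphisms. Combining these facts with \eqref{eq:M W Definition}, every $w_y \in W_Y$ is uniquely of the form $w_y = V_Y g$ for some $g \in G_Y$ with $\|g\|_{G_Y} = \|w_y\|_{W_Y}$, and then $M_W w_y = \hat{V}_Z g$.

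The key observation is that Lemma \ref{lem:Subsets} gives $G_Z \subseteq G_Y$, so I can orthogonally decompose $g = g_1 + g_2$ with $g_1 \in G_Z$ and $g_2 \in G_Y \cap G_Z^\perp$. Because $g_2 \in \ker \hat{V}_Z$, one gets $\hat{V}_Z g = V_Z g_1$, and by Pythagoras
\begin{align*}
\|M_W w_y\|_{W_Z}^2 = \|g_1\|_{G_Z}^2 = \|g\|_{G_Y}^2 - \|g_2\|_{G_Y}^2 = \|w_y\|_{W_Y}^2 - \|g_2\|_{G_Y}^2.
\end{align*}
Hence $M_W$ preserves norms if and only if the orthogonal complement of $G_Z$ inside $G_Y$ is trivial, i.e.\ iff $G_Y = G_Z$.

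This immediately yields both implications. For \emph{ii)} $\Rightarrow$ \emph{i)}, if $G_Y = G_Z$ then $g_2 \equiv 0$ and $M_W$ is an isometry; surjectivity was already noted just after \eqref{eq:M W Definition}. For \emph{i)} $\Rightarrow$ \emph{ii)} I would argue by contraposition: if $G_Z \subsetneq G_Y$, pick a nonzero $g \in G_Y \cap G_Z^\perp$, set $w_y := V_Y g$, observe $\|w_y\|_{W_Y} = \|g\|_{G_Y} > 0$ but $M_W w_y = \hat{V}_Z g = 0$, contradicting isometry.

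The only subtle point — and the one I would be most careful about — is keeping the ambient Hilbert spaces straight: $G_Z^\perp$ has to be interpreted in $L^2(\mathcal{A})$ so that Lemma \ref{lem:Unitar} \emph{iii)} applies, while the decomposition $g = g_1 + g_2$ takes place inside $G_Y$, which is legitimate precisely because $G_Z \subseteq G_Y$. Once that bookkeeping is in place, the rest of the argument reduces to Pythagoras. Incidentally, the hypothesis that $W_Y$ be dense in $F$ does not appear to be used in this equivalence; it is presumably included for consistency with the surrounding statements rather than out of genuine necessity here.
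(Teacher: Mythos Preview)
Your proof is correct and follows essentially the same approach as the paper: both arguments rewrite $M_W w_y = \hat V_Z g$ with $g = V_Y^{-1}w_y$, invoke $G_Z\subseteq G_Y$ from Lemma~\ref{lem:Subsets}, use surjectivity from the discussion after \eqref{eq:M W Definition}, and for \emph{i)}$\Rightarrow$\emph{ii)} pick a nonzero $g$ in the orthogonal complement of $G_Z$ in $G_Y$ to produce a kernel element. Your Pythagoras identity packages both directions into a single norm computation, and your observation that the denseness of $W_Y$ is not used matches the paper's own remark immediately following the lemma.
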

  Note that \ref{enum:1} and \ref{enum:2} are aquivalent even if $W_Y$ is not dense in $F$.
  \color{black}
\begin{proof}
\atob{\ref{enum:2}}{\ref{enum:1}}
 As discussed around \eqref{eq:M W Definition}, the operator $M_W$ is surjective. 
Furthermore, for $w \in W_Y$ we find a $g \in G_Y=G_Z$ such that $w=V_Y g$. This gives
\begin{align*}
\Vert M_W w \Vert_{W_Z}= \Vert \hat{V}_Z \hat{V}_Y^* V_Y g \Vert_{W_Z} = \Vert \hat{V}_Z g \Vert_{W_Z}= \Vert g \Vert_{G_Z} = \Vert g \Vert_{G_Y} = \Vert V_Y g \Vert_{W_Y}= \Vert w \Vert_{W_Y}.
\end{align*}
 Therefore, $M_W:W_Y \to W_Z$ is an isometric isomorphism.

\atob{\ref{enum:1}}{\ref{enum:2}}
By Lemma \ref{lem:Subsets} we know that $G_Z \subseteq G_Y$. 
To prove the converse inclusion we consider the orthogonal complement $G_Z^\perp$ of $G_Z$ in $G_Y$. Let $g \in G_Z^\perp$. Then by the definition of $M_W$ and Lemma \ref{lem:Unitar} we have
\begin{align*}
 M_W \left(\int_\Omega g Y \, \textup{d} \mu \right) = \hat{V}_Z g = 0.
\end{align*}
By assumption the latter implies $V_Y g =0$ and since $V_Y$ is injective we find $g=0$. Therefore we have $G_Z^\perp = \{0\}$.
\end{proof}

\begin{proof}[Proof of Theorem \ref{Theorem:Extension of M_W}]
We first note that by Lemma \ref{lem:isometry+surjective}, we have that $M_W$ is an isometry. 
    We now define a norm on $W_Z$ by
    \begin{align*}
        \Vert w \Vert_{\Tilde{E}_0} := \Vert M_W^{-1}w \Vert_{F}
    \end{align*}
    for all $w \in W_Z$.
    Since $M_W$ is invertible as an isometric isomorphism, this norm is well-defined. We define $\tilde{E}$ to be the completion of the space $(W_Z,\Vert \mycdot \Vert_{\Tilde{E}_0})$.  
    
    Our first goal is to show that $\Tilde{E}$ is separable. To this end we note that $W_Y \subseteq F$ is separable with respect to the $\Vert \mycdot \Vert_F$-norm since $F$ is separable. Consequently there exists a countable and $\Vert \mycdot \Vert_F$-dense $D \subseteq W_Y$. Clearly it suffices to show that $\hat{D}:=M_W D \subseteq W_Z$ is $\Vert \mycdot \Vert_{\Tilde{E}_0}$-dense in $W_Z$. To this end we fix a $w \in W_Z$. 
    Then there exists a $v \in W_Y$ such that $M_W v =w $, and additionally  a sequence $(v_n) \subset D$ such that $v_n \to v$. For $w_n:=M_W v_n \in \hat{D}$, we then find 
 \begin{align*}
 \Vert w_n - w \Vert_{\Tilde{E}_0}= \Vert M_W^{-1} (w_n - w ) \Vert_F = \Vert v_n - v \Vert_F \to 0.
\end{align*}  
Let us now consider the operator $\tilde{M}_W : W_Y \to \tilde{E}$ that is given by 
\begin{align*}
\Tilde{M}_W w = M_W w. 
\end{align*}
Our next goal is to show that this operator is continuous with respect to the norms $\Vert \mycdot \Vert_F$ and $\Vert \mycdot \Vert_{\tilde{E}_0}$.
To this end we fix $w \in W_Y$, then we have $\tilde{M}_W w \in W_Z$ and thus we find 
\begin{align*}
\Vert \tilde{M}_W w \Vert_{\tilde{E}_0} = \Vert M_W^{-1} \tilde{M}_W w \Vert_F = \Vert w \Vert_F.
\end{align*}
By \cite[Theorem 1.9.1]{BanachSpace} there exists a unique bounded and linear extension $\hat{M}:F \to \Tilde{E}$ of $\Tilde{M}_W$. Clearly it is even an isometry.

        Lastly, we show that
        $\hat{M}Y =Z$
    holds true. To this end, let $(r_j)_{j \in J} \subset G_Y$ be an ONB. By Theorem \ref{Theorem:Conditioning orthogonal projection} we then find 
        \begin{align*}
        Y&=\mathbb{E}(Y|Y) = \sum_{j \in J} r_j \int_\Omega r_j Y \, \textup{d} \mu,
        \end{align*}
        where the convergence is pointwise almost sure in $F$.
Applying $\hat{M}$ leads to
        \begin{align*}
      \hat{M} Y
      =  \hat{M} \left( \sum_{j \in J} r_j \int_\Omega r_j Y \, \textup{d} \mu \right) 
      = \sum_{j \in J} r_j \hat{M} \left( \int_\Omega r_j Y \, \textup{d} \mu \right) 
      &= \sum_{j \in J} r_j M_W \left( \int_\Omega r_j Y \, \textup{d} \mu \right) \\ 
      &= \sum_{j \in J} r_j \int_\Omega r_j Z \, \textup{d} \mu \\
	  &= \mathbb{E}(Z|Y)       \\
      &=Z,
    \end{align*}
    where in the fourth step we used the definition of $M_W$.
\end{proof}

\begin{proof}[Proof of Theorem \ref{Theorem:conditional_variance}]        
The first assertion can be found for example in \cite[Theorem 3.3 vi).]{IngoConditioning}, but here we give an independent proof.
        To this end we define $X_u:=X-Z$, this gives 
    \begin{align*}
        Z=\mathbb{E}(X|Y)= \mathbb{E}(X_u+Z|Y)= \mathbb{E}(X_u|Y)+\mathbb{E}(Z|Y).
    \end{align*}
    Using the definition of the conditional expectation we conclude $\mathbb{E}(Z|Y)=Z$, which implies $\mathbb{E}(X_u|Y)=0$.
 We conclude that 
    \begin{align}\label{eq: unabhangig}
        \langle g_u,g_y \rangle_{L^2(\mathcal{A})} = 0 
    \end{align}
    for all $g_u \in G_{X_u}$ and $g_y \in G_Y$. Additionally, since $\mathbb{E}(X)=\mathbb{E}(Z)=0$
    we have 
    \begin{align*}
        \mathbb{E}(X_u)=0.
    \end{align*} 
Utilizing Lemma \ref{lem:Covariance Rechenregel} leads to
\begin{align*}
    \langle \textup{cov}(X|Y) e',e' \rangle_{E,E'} =     \langle \textup{cov}(X_u+Z|Y) e',e' \rangle_{E,E'} 
     &= \mathbb{E}( \left(e'(X_u+Z- \mathbb{E}(X_u+Z|Y))\right)^2 |Y)  \\
     &= \mathbb{E}( \left(e'(X_u+Z- Z)\right)^2 |Y) \\
     &= \mathbb{E}( \left(e'(X_u)\right)^2 | Y) \\
     &=  \langle \textup{cov}(X_u) e',e' \rangle_{E,E'} \\
     &=  \langle \textup{cov}(X-Z) e',e' \rangle_{E,E'} \\
     &= \langle \textup{cov}(X) e',e' \rangle_{E,E'}- \langle \textup{cov}(Z) e',e' \rangle_{E,E'} \, ,
\end{align*}
where in the fifth step we used that $e'(X_u)$ is independent of $Y$, since $\mathbb{E}(e'(X_u)|Y)=0$.
Where we used the well known fact that it suffices to calculate the diagonal of the covariance operator, see for example \cite[Lemma B.2.]{IngoConditioning}.

For the second assertion we use $MY=Z$ and some well known formulas for cross covariance operators, see e.g. \cite[Equation (B.3)]{IngoConditioning}:
\begin{align*}
 \textup{cov}(X|Y) 
    = \textup{cov}(X)-  \textup{cov}(Z)
        = \textup{cov}(X)-  M \textup{cov}(Y) M' \, . 
\end{align*} 
\end{proof} 

\begin{proof}[Proof of Theorem \ref{Theorem:main convergence rate generalized}]
We apply Theorem \ref{Theorem:conditional_variance} and obtain with Lemma \ref{lem:Covariance Rechenregel}
\begin{align*}
\textup{cov}(X|Y_n)-\textup{cov}(X|Y)=\textup{cov}(X)-\textup{cov}(Z_n)-\textup{cov}(X)-\textup{cov}(Z)
&=\textup{cov}(Z)-\textup{cov}(Z_n) \\
&=\textup{cov}(Z-Z_n).
\end{align*} 
    Using \hyperref[Assumption:M]{Assumption M}, Theorem \ref{Theorem:Conditioning orthogonal projection} and given an ONB $(r_j)_{j \in J} \subseteq G_{Y_n}$ we obtain
\begin{align*}
M Y_n= M \sum_{j \in J} r_j \int_\Omega r_j Y \, \textup{d} \mu = \sum_{j \in J} r_j M \int_\Omega r_j Y \, \textup{d} \mu = \sum_{j \in J} r_j \int_\Omega r_j Z \, \textup{d} \mu = Z_n.
\end{align*}
Now we prove the inequality via
    \begin{align*}
        \Vert   \textup{cov}(X|Y) - \textup{cov}(X|Y_n) \Vert_{E' \to E} &= \Vert \textup{cov}(Z) - \textup{cov}(Z_n) \Vert_{E' \to E} \\
        &= \Vert \textup{cov}(MY) - \textup{cov}(MY_n) \Vert_{E' \to E} \\
        &\leq \Vert M \Vert_{F \to E}^2 \Vert \textup{cov}(Y) - \textup{cov}(Y_n) \Vert_{F' \to F}.
    \end{align*}
\end{proof}

\begin{proof}[Proof of Corollary \ref{Corollary:Polynomial Convergence Rate P-Greedy}]
We apply Theorem \ref{Theorem:main convergence rate generalized}, meaning we only have to show that 
\begin{align*}
\Vert \textup{cov}(Y) - \textup{cov}(Y_n) \Vert_{F' \to F} \leq 2 \min_{1 \leq m < n } \gamma^{-2} d_m^{\frac{2(n-m)}{n}}(\mathcal{F}) 
\end{align*}
holds true. Using Corollary \ref{cor:convergence rate P-greedy}, we obtain 
\begin{align*}
\sup_{ f \in B_{H_Y}} \Vert f - \Pi_{F_n'} f \Vert_{C(B_{F'})} \leq \sqrt{2} \min_{1 \leq m < n } \gamma^{-1} d_m^{\frac{(n-m)}{n}}(\mathcal{F}),
\end{align*}
with $\Pi_{F_n'}$ being the orthogonal projection in $H_Y$ onto the subspace $\textup{span} \{ k_Y(\mycdot, f_n') \, | \, f_n' \in F_n'\}$. Utilizing Theorem \ref{Theorem: Kernel Conditioning} and \cite[Lemma 2.3]{pgreedy} we end up with 
\begin{align*}
\sup_{ f \in B_{H_Y}} \Vert f - \Pi_{F_n'} f \Vert_{C(B_{F'})}^2 &= \sup_{ f \in B_{H_Y}} \sup_{f' \in B_{F'}} | (f - \Pi_{F_n'} f) (f')|^2 \\
&= \sup_{f' \in B_{F'}} \Vert k_Y(\mycdot,f') - \Pi_{F_n'} k_Y(\mycdot, f') \Vert_{H_Y}^2 \\
&= \sup_{f'\in F'}\Vert \textup{cov}(Y)-\textup{cov}(Y_n) \Vert_{F' \to F}.
\end{align*}
Again using Theorem \ref{Theorem: Kernel Conditioning} and \cite[Lemma 2.3]{pgreedy} we obtain 
\begin{align*}
d_n(\mathcal{F})^2 &\leq \sup_{f \in B_{H_Y}} \Vert f - \Pi_{\mathfrak{F}_n'} f \Vert_{C(B_{F'})}^2 \\
&=  \Vert \textup{cov}(Y)-\textup{cov}(Y_n^\star) \Vert_{F' \to F} \leq Cn^{-\alpha}. 
\end{align*}
Using \cite[Corollary 3.3 (ii)]{devore}, we end up with
\begin{align*}
\Vert \textup{cov}(Y)-\textup{cov}(Y_n) \Vert_{F' \to F} \leq 2^{5\alpha+1}\gamma^{-2} C n^{-\alpha}.
\end{align*}
\end{proof}
\color{black}
\begin{lemma}\label{lem:minimumkernel}
Let $a,b,c \in \mathbb{R}$ such that $a<b$ and $c^2+a > 0$. Moreover let $k:[a,b] \times [a,b] \to \mathbb{R}$ be the function given by  
\begin{align*}
k(t,s):=c^2 + \min(t,s).
\end{align*}
Then $k$ is a kernel and the scalar product of the associated RKHS $H$ is given by 
\begin{align}\label{eq:scalarproductH1}
\langle f ,g \rangle_H=\frac{f(a)g(a)}{c^2+a} + \int_a^b f'(t) g'(t) \, \textup{d} t
\end{align}
for all $f,g\in H$.
\end{lemma}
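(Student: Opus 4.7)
My plan is to produce the RKHS of $k$ explicitly: I define a candidate Hilbert space $H$ directly via the formula \eqref{eq:scalarproductH1}, verify that it is a genuine Hilbert space of functions in which $k$ reproduces point evaluations, and deduce from this that $k$ is a kernel and that $H$ is its RKHS. The assumption $c^2+a>0$ is used precisely to ensure that \eqref{eq:scalarproductH1} is positive definite.

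\textbf{Step 1 (definition).} I take $H$ to be the set of absolutely continuous functions $f:[a,b]\to \mathbb R$ with $f'\in L^2([a,b])$, equipped with the bilinear form \eqref{eq:scalarproductH1}. Symmetry and bilinearity are obvious, and since $c^2+a>0$ the form is positive, with $\langle f,f\rangle_H=0$ forcing $f(a)=0$ and $f'=0$ a.e., hence $f=0$ by absolute continuity. So $\langle\cdot,\cdot\rangle_H$ is an inner product.

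\textbf{Step 2 (completeness).} I consider the map $T: H\to \mathbb R\oplus L^2([a,b])$, $f\mapsto (f(a),f')$, where the $\mathbb R$-component carries the inner product $(\alpha,\beta)\mapsto \alpha\beta/(c^2+a)$. By construction $T$ is a linear isometry. It is surjective because every $(\alpha,h)\in\mathbb R\oplus L^2$ comes from the absolutely continuous function $f(t):=\alpha+\int_a^t h(u)\, \textup{d}u$, which lies in $H$ and satisfies $Tf=(\alpha,h)$. Since $\mathbb R\oplus L^2([a,b])$ is a Hilbert space, so is $H$.

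\textbf{Step 3 (reproducing property).} For fixed $s\in[a,b]$, the function $k(\cdot,s)$ is Lipschitz (hence absolutely continuous) on $[a,b]$ with $k(a,s)=c^2+a$ and weak derivative $t\mapsto \chi_{[a,s]}(t)$ a.e., so $k(\cdot,s)\in H$. For any $f\in H$, the fundamental theorem of calculus for absolutely continuous functions gives $\int_a^s f'(t)\, \textup{d}t = f(s)-f(a)$, and therefore
\begin{align*}
\langle k(\cdot,s),f\rangle_H
&= \frac{(c^2+a)\,f(a)}{c^2+a} + \int_a^b \chi_{[a,s]}(t)\,f'(t)\, \textup{d}t \\
&= f(a) + \bigl(f(s)-f(a)\bigr) = f(s).
\end{align*}

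\textbf{Step 4 (conclusion).} From Step 3, point evaluations are continuous on $H$ and $k$ acts as reproducing kernel. Consequently, for any $n\in\mathbb N$, $t_1,\dots,t_n\in[a,b]$ and $\alpha_1,\dots,\alpha_n\in\mathbb R$,
\begin{align*}
\sum_{i,j=1}^n \alpha_i\alpha_j k(t_i,t_j)
= \Bigl\langle \sum_{i=1}^n \alpha_i k(\cdot,t_i),\,\sum_{j=1}^n \alpha_j k(\cdot,t_j)\Bigr\rangle_H \geq 0,
\end{align*}
so $k$ is positive definite, i.e., a kernel. Since an RKHS is uniquely determined by its reproducing kernel, $H$ is the RKHS of $k$ and \eqref{eq:scalarproductH1} is its scalar product. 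The main (minor) obstacle is only the careful handling of the distributional derivative of $t\mapsto \min(t,s)$ at $t=s$, which is harmless because the singleton $\{s\}$ is Lebesgue-null.
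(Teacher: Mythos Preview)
Your proof is correct and follows essentially the same approach as the paper: define the candidate Hilbert space $H$ via \eqref{eq:scalarproductH1} and verify directly that $k(\cdot,s)\in H$ reproduces point evaluations. You are simply more explicit than the paper (which declares the Hilbert-space verification a ``simple routine'' and asserts ``clearly $k$ is a kernel''), in particular by spelling out completeness via the isometry $T:H\to \mathbb{R}\oplus L^2([a,b])$ and by deducing positive definiteness of $k$ from the reproducing property rather than stating it upfront.
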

\begin{proof}
Clearly $k$ is a kernel.
Let us consider the set $H=H^1([a,b])$, equipped with the scalar product from \eqref{eq:scalarproductH1}. It is a simple  routine to verify that $(H,\Vert \mycdot \Vert_H)$ is an RKHS and that $k(s,\mycdot)\in H$ for all $s\in [a,b]$. Consequently, it suffices to show that $k$ is the reproducing kernel of $H$. To this end, 
 we pick an $f \in H$. For $s \in [a,b]$, we then have 
\begin{align*}
\langle f , c^2+ \min(s,\mycdot) \rangle_H
= \frac{f(a)(c^2 +a)}{c^2+a} + \int_a^s f'(t) \, \textup{d}t 
= f(a) + f(s)-f(a) 
=f(s).
\end{align*}
\end{proof}

\section{Additional Examples}\label{sec:6}
Lastly we give some final example on how to deal with a general operator $L:E  \to F$ and some noisy observation such that $Y=LX+N$ with $N$ being a Gaussian random variable independent of $X$. 
\begin{example}\label{example:1}
We assume that $W_Y$ is dense in $F$ and that we have a continuous invertible operator $L:E \to F$ such that $Y=LX$. A simple calculation shows  
\begin{align*}
Z= \mathbb{E}(X|Y)= \mathbb{E}(X|LX) = \mathbb{E}(X|X)=X=L^{-1}Y,
\end{align*}
and this suggests $M=L^{-1}$. Let us now verify this.
We first observe that since $L$ is invertible the equality 
\begin{align*}
G_X=G_Y
\end{align*}
holds true.
For $g \in G_X$ the norm of $w :=V_X g \in W_X$ is given by 
\begin{align*}
\Vert L_W w \Vert_{W_Y}=\left\Vert \int_\Omega g Y \, \textup{d} \mu \right\Vert_{W_Y}= \left\Vert g \right\Vert_{G_Y} = \left\Vert g \right\Vert_{G_X}= \left\Vert \int_\Omega g X \, \textup{d} \mu \right\Vert_{W_X}= \Vert w \Vert_{W_X},
\end{align*}
where we used the definition $L_W = \hat{V}_Y \hat{V}_X^*$, see \eqref{eq:Definition of L_W as commutative diagramm}.
Consequently $L_W$ is isometric, and since it is also surjective it is an isometric isomorphism.
here we used that $L_W^{-1}$ exists because $L$ is invertible.
By the definition of $M_W$ we conclude that
$M_W=L_W^*=L_W^{-1}$.
Since we further know that $L|_{W_X}=L_W$, see Lemma \ref{lem:L_W = L restricted}, we obtain $M_W=L_W^{-1}=L^{-1}|_{W_Y}$. Consequently $L^{-1}$ is indeed a continous extension of $M_W$ and since $W_Y$ is dense in $F$ it is the only one, in other words we have 
\begin{align*}
M=L^{-1}.
\end{align*} 
\end{example}

The next example generalizes the previous Example \ref{example:1} to $L$ that are not invertible.

\begin{example}\label{example:2}
Let $L:E \to F$ be a bounded operator and $Y=LX$. Then we have $M_W=L_W^\dagger$, where
\begin{align}\label{eq:MinimizationProblem}
L_W^\dagger w:= \textup{argmin} \left\{ \Vert w_x \Vert_{W_X} \, \middle| \, L_W w_x= w \right\}, \qquad \qquad w \in W_Y
\end{align}
is the Moore-Penrose inverse of $L_W$, see \cite{Penrose}.
To verify this we first note that $L_W^\dagger$ does exist since $\textup{ran}(L_W)=W_Y$ is obviously closed in $W_Y$.
Moreover , we find $G_Y \subseteq G_X$ by $Y=LX$, and obviously $G_Y$ is closed in $G_X$.

We define the space 
\begin{align*}
W_X(Y):=\left\{ \int_\Omega g X \, \textup{d} \mu \,  \middle\vert \, g \in G_Y \right\}.
\end{align*}
Note that the space $W_X(Y)$ is a closed subspace of $W_X$ by the $G_Y\subseteq G_X$ closed and $W_X(Y)=V_X G_Y$. 
Now let $w \in W_Y$. Then there exists a $g \in G_Y$ such that $V_Yg=w$. We calculate 
\begin{align*}
\Vert w \Vert_{W_Y}
= \left\Vert \int_\Omega g Y \, \textup{d} \mu \right\Vert_{W_Y} 
= \left\Vert \int_\Omega \Pi_{G_Y} g Y \, \textup{d} \mu \right\Vert_{W_Y}  
= \Vert \Pi_{G_Y} g \Vert_{G_Y} 
&= \Vert \Pi_{G_Y} g \Vert_{G_X} \\
&= \left\Vert \int_\Omega \Pi_{G_Y}  g X \, \textup{d} \mu \right\Vert_{W_X},
\end{align*} 
where in the second step we used $g \in G_Y$ and thus $\Pi_{G_Y} g = g $. 
Moreover, since for $U:=V_X^*$ we have $\Pi_{G_X} U=U \Pi_{W_X(Y)} $, see Lemma \ref{lem:Orthogonal projections are similiar}, we find
\begin{align*}
\left\Vert \int_\Omega \Pi_{G_Y}  g X \, \textup{d} \mu \right\Vert_{W_X}
&=
 \left\Vert \Pi_{W_X(Y)}\left( \int_\Omega   g X \, \textup{d} \mu \right) \right\Vert_{W_X} \\
&= \min \left\{ \Vert \tilde{w} \Vert_{W_X} \, \middle\vert \, \tilde{w}-\int_\Omega g X \, \textup{d} \mu \in W_X(Y)^\perp,\, {\tilde{w} \in W_X} \right\}  
\end{align*}
Now observe that $G_Y \subseteq G_X$ implies $L_W=\hat{V}_Y \hat{V}_X^*=V_Y V_X^*$.
In addition, for $\tilde{w} \in W_X$ we have 
\begin{align*}
\tilde{w}-\int_\Omega g X \, \textup{d} \mu \in W_X(Y)^\perp \,
\Leftrightarrow \, V_X^* \left( \tilde{w}-\int_\Omega g X \, \textup{d} \mu \right) \in G_Y^\perp \,  
&\Leftrightarrow \quad V_Y V_X^* \left( \tilde{w}-
 \int_\Omega g X \, \textup{d} \mu \right) = 0 \\
&\Leftrightarrow \quad L_W \tilde{w} = \int_\Omega g LX \, \textup{d} \mu.
\end{align*} 
In summary we thus find 
\begin{align*}
\Vert w \Vert_{W_Y} 
= \left\Vert \int_\Omega \Pi_{G_Y}  g X \, \textup{d} \mu \right\Vert_{W_X} 
&= \min \left\{ \Vert \tilde{w} \Vert_{W_X} \, \middle\vert \, \tilde{w}-\int_\Omega g X \, \textup{d} \mu \in W_X(Y)^\perp,\, {\tilde{w} \in W_X} \right\}  \\
 &=\min \left\{ \Vert \tilde{w} \Vert_{W_X} \, \middle\vert \, L_W \tilde{w}    =\int_\Omega g LX \, \textup{d} \mu,\, {\tilde{w} \in W_X} \right\} \\
 &= \left\Vert L_W^\dagger \left( \int_\Omega g Y \, \textup{d} \mu \right) \right\Vert_{W_X} \\
 &= \Vert L_W^\dagger w \Vert_{W_X}.
\end{align*}
Now calculating the adjoint given $ u \in W_X$ and $ v \in W_Y$ we obtain
\begin{align*}
\langle L_W u, v \rangle_{W_Y} = \langle L_W^\dagger L_W u, L_W^\dagger v \rangle_{W_X} = \langle u, (L_W^\dagger L_W )^* L_W^\dagger v \rangle_{W_X} = \langle u, L_W^\dagger v \rangle_{W_X},
\end{align*}
where in the last step we used one of the defining properties of the Moore-Penrose inverse. In other words we have $M_W=L_W^\dagger$.
Note that depending on the situation we sometimes can extend $M_W$, see Examples \ref{example:Gegenbeispiel} and \ref{example:4}.
\end{example}

The next example extends the previous example to noisy observations of $LX$.
\begin{example}\label{example:3}
Let $L:E \to F$ be a bounded operator and $N: \Omega \to F$ be a Gaussian random variable independent of $X$. For
\begin{align*}
Y:= LX + N ,
\end{align*}
we then have 
\begin{align}\label{eq:MWDaggerEinbettung}
M_W= L_W^\dagger \textup{Id}_L^*
\end{align}
 with $L_W^\dagger $ as in \eqref{eq:MinimizationProblem} and $\textup{Id}_L^*$ being the adjoint of the embedding $\textup{Id}_L: W_{LX} \to W_Y$, where we note that this embedding is well-defined since the independence of $N$ and $X$ implies $G_Y= G_{LX} \oplus G_N$.
To verify \eqref{eq:MWDaggerEinbettung}
we note that for $g \in G_X$
\begin{align*}
L_w \left( \int_\Omega g X \, \textup{d} \mu \right) 
=\int_\Omega g Y \, \textup{d} \mu = \int_\Omega g (LX + N) \, \textup{d} \mu = \int_\Omega g LX \, \textup{d} \mu,  
\end{align*}
where we used that $N$ and $X$ are independent, thus $\int_\Omega g N \, \textup{d} \mu =0$. 
 In other words, we have $L_W=\textup{Id}_L \hat{L}_W$ with 
\begin{align*}
\hat{L}_W: W_X &\to W_{LX}  \\
\int_\Omega g X \, \textup{d} \mu &\mapsto \int_\Omega g LX \, \textup{d} \mu.
\end{align*}
We already showed in Example \ref{example:2} that the adjoint of $\hat{L}_W$ is  $L_W^\dagger$. In summary, the adjoint of $L_W$ is
\begin{align*}
L_W^*=(\textup{Id}_L \hat{L}_W)^*=\hat{L}_W^* \textup{Id}_L^* = L_W^\dagger \textup{Id}_L^*.
\end{align*}
\end{example}

Example \ref{example:3} shows that one can factorize the conditioning problem into solving the minimization problem for $L_W^\dagger$ in \eqref{eq:MinimizationProblem} and calculating the adjoint of an embedding. 
We further note that for invertible $L$, we have $L_W^\dagger=L^{-1}|_{W_{Y}}$, which makes the computation of $M_W=L_W^*$ in Example \ref{example:3} easier.

The next example investigates a variant of the noise model in Example \ref{example:3}.

\begin{example}\label{example:3.5}
Let $L:E \to F$ be a bounded operator, $N: \Omega \to E$ be a Gaussian random variable independent of $X$, and
\begin{align*}
Y:= L(X + N)\, .
\end{align*}
Clearly this is a special case of Example \ref{example:3} and in the following we provide an alternative way to compute $M_W$. 

To this end, we define the following operators 
\begin{align*}
&{\hat{L}_W : W_X \to W_{LX}}, &{\tilde{L}_W : W_{X+N} \to W_Y}, \\
&{\qquad \quad \, \, w \mapsto Lw} &{\qquad \quad \, \, w \mapsto L w} \, \, \\
&{\textup{Id} : W_X \to W_{X+N}}, &{\textup{Id}_L : W_{LX} \to W_Y 
}. \\
&{\qquad \, \, \, \,  w \mapsto w} &{w \mapsto w }  \, \, \, \, \, \, \,
\end{align*} 
Repeating the calculations of Example \ref{example:3} we obtain the following commutative diagram 
\begin{equation*}
\begin{tikzcd}
W_X \arrow[rr, "\hat{L}_W"] \arrow[dd, "\textup{Id}"] \arrow[rrdd, "L_W"] &  & W_{LX} \, \arrow[dd, "\textup{Id}_L"] \\
                                                                          &  &                                  \\
W_{X+N} \arrow[rr, "\tilde{L}_W"]                                           &  & W_Y .                            
\end{tikzcd}
\end{equation*}
In other words we have $L_W = \textup{Id}_L \hat{L}_W = \tilde{L}_W \textup{Id}$.
Calculating the adjoint as in Example \ref{example:3} we obtain 
\begin{align*}
M_W = \hat{L}_W^\dagger \textup{Id}_L^* = \textup{Id}^* \tilde{L}_W^\dagger.
\end{align*}
We note that the Moore-Penrose inverses of $\hat{L}_W$ and $\tilde{L}_W$ can differ.
\end{example}

 In our
 final example, we investigate Hilbert space valued Gaussian random variables 
 for which our observational $Y$ is based upon a subset of the eigenvectors of the covariance operator of $X$. 

\begin{example}\label{example:6}
   Let $H$ be a separable Hilbert space and $X:\Omega \to H$ be a Gaussian random variable
such that $W_X$ is dense in $H$.
   Then the covariance operator can be viewed as a symmetric and positive operator 
   $\textup{cov}(X): H \to H$. Clearly, $\textup{cov}(X)$ is also compact and our denseness
   assumption ensures that it is injective. 
    Consequently, all its, at most countably many, 
eigenvalues $(\lambda_i)_{i\in I}$ are greater than zero. The corresponding eigenvectors, denoted by $(e_i)_{i\in I}$, 
  form an ONB of $H$ and one can show that the sequence $(\sqrt{\lambda_i} e_i)_{i \in I}$ is an ONB of $W_X$ with 
   \begin{align}\label{eq:Minimierungsproblem}
       \Vert w \Vert_{W_X}^2 = \sum_{i \in I }  \langle w, e_i \rangle_{W_X}^2 = \sum_{i \in I }  \frac{\langle w, \textup{cov}(X) e_i \rangle_{W_X}^2}{\lambda_i}  =\sum_{i \in I }  \frac{\langle w, e_i \rangle_H^2}{\lambda_i}
   \end{align}
   for all $w\in W_X$.
     Now,  given a non-empty $J \subseteq I$ we consider the map $L:H \to \ell^2(J)$ given by
      \begin{align*}
          L f := \left( \langle f,e_j\rangle_H \right)_{j \in J}\, . 
      \end{align*}
      Lastly, we set $Y:=L X$. By Example \ref{example:2} we know that $M_W :W_Y\to W_X$ is given by 
\begin{align*}
M_W w = \textup{argmin} \left\{ \Vert w_x \Vert_{W_X} \, \middle| \, L w_x= w \right\}
\end{align*}
for all $w \in W_Y$.  To solve this optimization problem, we fix a 
$w:=(w_j)_{j \in J} \in W_Y \subseteq \ell^2(J)$.
For $w_x\in W_X$
with $Lw_x = w$ we then know $w_x= \sum_{i \in I} \alpha_i \sqrt{\lambda_i} e_i$
for some
  $(\alpha_i) \in \ell^2(I)$, and therefore  $Lw_x = w$ 
implies
\begin{align*}
  ( \alpha_j \sqrt{\lambda_j} )_{j \in J} =   L_W w_x =  (w_j)_{j \in J}. 
\end{align*}
In other words, we have $\a_j = \lb_j^{-1/2} w_j$ for all $j\in J$.
In view of \eqref{eq:Minimierungsproblem} we conclude that the sought minimizer satisfies
$\a_i = 0$ for all $i\in I\setminus J$ and therefore we find
\begin{align*}
M_W w  = \sum_{j\in J} \a_j  \sqrt{\lb_j} e_j = \sum_{j \in J} w_j e_j 
\end{align*}
with convergence in $W_X$. Since this shows $\snorm{M_Ww}_H \leq \snorm w_{\ell^2(J)}$
for all $w\in \ell^2(J)$ we conclude that $M_W$ can be uniquely extended 
to a bounded linear operator 
$M: \ell^2(J) \to H$, which is given by  
\begin{align*}
M w  = \sum_{j \in J} w_j e_j\, .
\end{align*}
\end{example} 
We note that Example \ref{example:6} covers the case of rotational invariant Gaussian random variables on a sphere $\mathbb{S}^n$, with $H=L^2(\mathbb{S}^n,\lambda)$ and $\lambda$ being a rotational invariant measure on $\mathbb{S}^n$.

\paragraph*{Acknowledgements}Daniel Winkle acknowledges funding by the International Max Planck Research School for Intelligent Systems (IMPRS-IS)

\bibliographystyle{apalike}
\bibliography{daniel_refs}

\end{document}